\newtheorem{theorem}{Theorem}[section]
\newtheorem{lemma}[theorem]{Lemma}
\newtheorem{proposition}[theorem]{Proposition}
\theoremstyle{remark}
\newtheorem{remark}[theorem]{Remark}
\newcommand{\RR}{\mathbb{R}}
\newcommand{\NN}{\mathbb{N}}
\newcommand{\la}{\lambda}
\newcommand{\dd}{\mathrm{ \,d}}
\newcommand{\e}{\varepsilon}
\newcommand{\uloc}{\textnormal{uloc}}
\newcommand{\Rmnum}[1]{\expandafter\@slowromancap\romannumeral #1@}
\begin{document}
\title[] { Global-in-time Boundedness of solution for  Cauchy problem to  the Parabolic-Parabolic Keller-Segel system with logistic growth}
\author[Y. Nie and  X. Zheng]{Yao Nie and Xiaoxin Zheng}
\address{Institute of Applied Physics and Computational
Mathematics, Beijing 100191, P.R. China}
\email{nieyao930930@163.com}
\address{School of Mathematical Sciences, Beihang University, Beijing 100191, P.R. China}
\email{xiaoxinzheng@buaa.edu.cn}
\date{\today}
\keywords{Global-in-time boundedness; Keller-Segel; logistic growth; Cauchy problem; localization in space}
\begin{abstract} We study global-in-time well-posedness and the behaviour and of the solution to Cauchy problem in the classical Keller-Segel system with logistic term
\begin{equation*}
\left. \aligned
 \partial_tn-\Delta n=&-\chi\nabla\cdot(n\nabla c)+\la n-\mu n^2 \\
 \tau\partial_tc-\Delta c=&-c+n
\endaligned
\right\}\quad\text{in}\,\,\,\RR^d\times\RR^+,
\end{equation*}
where $d\ge 1$, $\tau,\, \chi,\, \mu>0$ and $\lambda\ge 0$. It's inspired by a previous result \cite[M. Winkler, Commun. Part. Diff. Eq., 35 (2010), 1516-1537]{Win10}, where the global-in-time boundedness of the above Keller-Segel system in smooth \emph{bounded }convex domains is established for large $\mu$. However, his approach in bounded domain ceases to directly apply in the entire space $\RR^d$, and then  they raised an interesting question whether a similar global-in-time boundedness statement remains true of Cauchy problem. In this paper, we answer this open problem by developing local-in-space estimates. More precisely, we prove that the above Keller-Segel system possesses a uniquely global-in-time bounded solution for any $\tau>0$ under the assumption that $\mu$ is large. The key point of our proof  heavily relies on localization in space of solution caused by ``local effect" of $L^\infty(\RR^d)$-norm.
\end{abstract}
\maketitle
%%%%%%%%%%%%%%%%%%%%%%%%%%%%%%%%%%%%%%%%%%%%%%%%%%%%%%%%%%%%%%%%%
\section{Introduction}
%%%%%%%%%%%%%%%%%%%%%%%%%%%%%%%%%%%%%%%%%%%%%%%%%%%%%%%%%%%%%%%%%
 \textit{Chemotaxis}, an extremely universal biological phenomenon, the biased movement of biological individuals (e.g., body cells, bacteria, single cells and multicellular organisms) in response to certain chemical gradients in the surrounding environment. Chemotaxis performs a pivotal role throughout the life cycle of biological individuals. For example, cancer cells secrete some enzymes for more cancerous cells to invade \cite{CSS05}, the egg sends some chemical substances to attract sperm cells \cite{HK05}, and immune cells move to sites of  inflammation \cite{Wu05}. This attracts many researchers to conduct extensive experiments into chemotactic mechanism of bacteria, such as \emph{Escherichia coli }\cite{BWS06} and \emph{Dictyostelium discoideum} \cite{DW06}. The mathematical modeling of chemotactic processes can be traced back to the pioneering works of Patlak in~\cite{Pa53} and Keller and Segel in \cite{KS70, KS71}. Nowadays, Keller-Segel system is widely known as a chemotaxis model and  its most prototypical version  coupling two parabolic equations is given by
 \begin{equation}\label{KS}
\left.\aligned
& \partial_tn-\Delta n=-\chi\nabla\cdot(n\nabla c)\\
 &\tau\partial_tc-\Delta c=-c+n \\
\endaligned
\right\},
\end{equation}
where $n$ and $c$ respectively denote the density of cells and the concentration of chemical substances. $\chi>0$, chemotactic coefficient,  measures the strength of attractive chemotaxis. $\tau> 0$ represents a relaxation time scale.
There exists a vast mathematical literature on system \eqref{KS},  mainly focusing on existence, boundedness, large time behaviour of bounded solutions and blow-up mechanisms. One striking feature of equations \eqref{KS} consists in its ability to chemotactic collapse under certain circumstances, shown
in many experimental frameworks and in rigorous mathematical framework of singularity formation (the $L^\infty$ norm of the cell density $n(x,t)$ becomes unbounded when $t$ reaches a blow-up time). Full of biological significance and mathematical challenges, many literatures are devoted to this popular issue on whether global-in-time bounded solutions exist, and if not, when blow-up phenomenon occurs. Some well-known results corresponding to this problem for full parabolic-parabolic system \eqref{KS} are listed as follows: In 1-D,  all solutions of equations \eqref{KS} are global and bounded, which implies chemotactic aggregation is entirely ruled out \cite{HP04, OY01}. In 2-D, one can find a threshold such that if initial mass is lower than the threshold, the solution will be global and bounded in a bounded domain with homogeneous Neumann boundary condition \cite{GZ98,NSY97} and global exist in $\RR^2$ \cite{CC08}. And there exist initial data such that the corresponding solution blows up either in finite or infinite time, provided initial mass ($L^1$ norm of  bacteria) is larger than a threshold under homogeneous Neumann boundary conditions in a smooth bounded domain \cite{HW01, SS01}. In higher dimension $d\ge 3$, smallness of initial mass cannot efficiently prevent chemotactic collapse, this conclusion has been proved by constructing blow-up solutions in finite time if domain is a ball \cite{Win10JDE, Win13} and in entire space~$\RR^d$~\cite{Win20}. Instead, smallness condition of initial data $(n_0, c_0)$ in $L^{\frac{d}{2}}\times \dot W^{1,d}$ under Neumann boundary conditions in a bounded domain \cite{Cao15} and in $L^{a}\times \dot W^{1,d}$ for $\frac{d}{2}<a\le d$ in the whole space \cite{CP08} guarantee global existence of solutions.
In many biological phenomena, the cell growth and death play an indelible role in chemotactic movement on small timescales, and the aggregating pattern-dynamics involved in growth of the population was first studied in \cite{MT96}. A prototypical choice to reveal this cell kinetics is to add a logistic source
$\lambda n-\mu n^2$ in the evolution equation for $n$ in \eqref{KS}. Here $\lambda\ge0, \mu>0$. Hence, the fully parabolic version ($\tau>0$) of classical Keller-Segel system with logistic source is given by
 \begin{equation}\label{KSL}
\left. \aligned
 & \partial_tn-\Delta n=-\chi\nabla\cdot(n\nabla c)+\lambda n-\mu n^2\\
 &\tau\partial_tc-\Delta c=-c+n
\endaligned
\right\}
\end{equation}
As several literatures have revealed, logistic source can suppress chemotactic aggregation in some cases. For instance, solutions of equations \eqref{KSL} are global-in-time (and bounded for Neumann initial-boundary value problem) for any $\mu>0$ in the setting of dimension $d\le 2$ \cite{ OT02, OY01, OY02, Xiang15}.
Unfortunately, in higher-dimension $d\ge 3$, aggregation vs. logistic damping effect becomes complicated. And results when $d\ge 3$ mostly cover cases that $\mu$ is greater than some positive constant under the Neumann boundary condition in a \emph{bounded} domain. More precisely, Winkler \cite{Win10} firstly rigorously proved that system \eqref{KSL} possesses a unique global-in-time bounded classical solution if $\mu$ is large enough and initial data is sufficiently smooth. Subsequently, a large body of works are devoted to study qualitatively and quantitatively on $\mu$ when logistic damping suppresses bacteria aggregation. So far, it is only known that  solutions  are global and bounded  when $\mu>\theta_0\chi$ for some positive constant $\theta_0$ in Neumann problems for system \eqref{KSL} $(\forall\tau>0)$ in a $d \,(d\ge 3)$ dimensional bounded domain \cite{TW15, Xi18, Xiang18, YCJZ15} or in   Cauchy problem for system \eqref{KSL} $(\tau=1)$ in \cite{Win10}. However, for $\mu\le \theta_0\chi$ or even for small $\mu>0$,  whether or not blow-up occurs in system \eqref{KSL} is still an open problem.
As mentioned above, in high dimension ($d\ge 3$),  previous results of global boundedness for solutions of system~\eqref{KSL} are corresponding to Neumann problems in a \emph{bounded} domain. And only in the special case $\tau=1$, one so far can derive a similar result for Cauchy problem. Indeed, the function $z:=\frac{\tau}{2}|\nabla c|^2+\frac{1}{\chi}n$, as is shown in \cite{Win10, Win14}, satisfies a scalar parabolic inequality which leads to the upper bound of both $n$ and $\nabla c$. More precisely, from system \eqref{KSL} we formally infer that
\begin{align*}
\frac{\dd}{\dd t}\left(\frac{\tau}{2}|\nabla c|^2+\frac{1}{\chi}n\right)=\nabla c\cdot\nabla\Delta c-|\nabla c|^2+\frac{1}{\chi}\Delta n-n\Delta c+\frac{\lambda}{\chi}n-\frac{\mu}{\chi}n^2.
\end{align*}
Noting the fact that $\nabla c\cdot\nabla\Delta c=\Delta(\frac{1}{2}|\nabla c|^2)-|D^2 c|^2$ and $|\Delta c|^2\le d|D^2 c|^2$, it yields by Young's inequality that
\begin{align*}
\frac{\dd}{\dd t}\left(\frac{\tau}{2}|\nabla c|^2+\frac{1}{\chi}n\right)\le \Delta\left(\frac{1}{2}|\nabla c|^2+\frac{1}{\chi}n\right)-|\nabla c|^2-\left(\frac{\mu}{\chi}-\frac{d}{4}\right)n^2+\frac{\lambda}{\chi}n.
\end{align*}
Here we see that if $\tau=1$ and $\mu>\frac{d}{4}\chi$, the function $z$ satisfies that
\begin{equation}\label{z}
z_t-\Delta z +z\le \frac{(\lambda+1)^2}{4\mu\chi-d\chi^2}.
\end{equation}
Hence, in the special case $\tau=1$, one can take advantage of the above nice inequality and the maximum principle to show global boundedness of solutions when $u_0$ and $\nabla v_0$ are bounded for  Cauchy problem. Unfortunately, in the general case $\tau\neq1$, the nice structure \eqref{z} is broken. For a convex bounded domain, Winkler \cite{Win10} cleverly made use of the coupling structure of system \eqref{KSL} and exploited a Moser-type iteration to derive the global boundedness solutions for any $\tau>0$. In problems posed in the whole space $\RR^d$,
the approaches in \cite{Win10} and other relevant  literatures focused on bounded domain cease to directly apply. Hence Winkler \cite{Win10} raised an interesting question \emph{whether a similar statement remains true when the physical domain is the entire space $\RR^d$},  which is still an open problem.
The purpose of this paper is to study the above interesting problem that global boundedness for solutions of \eqref{KSL} in the whole space. To be more precise, we consider Cauchy problem of the following parabolic-parabolic Keller-Segel system with logistic source:
\begin{equation}\label{che}
\left. \aligned
 \partial_tn-\Delta n=&-\chi\nabla\cdot(n\nabla c)+\la n-\mu n^2 \\
 \tau\partial_tc-\Delta c=&-c+n
\endaligned
\right\}\quad\text{in}\,\,\,\,\RR^d\times\RR^+,
\end{equation}
\begin{equation}\label{che-I}
n(0, x)=n_0(x),\quad c(0, x)=c_0(x)\quad\text{in}\,\,\,\,\RR^d.
\end{equation}
And initial data $(n_0, c_0)$ is equipped with the conditions that
\begin{equation}\label{ini}
\left\{ \aligned
&n_0\in  L^\infty(\RR^d)\,\text{ is nonnegative},\\
&c_0\in W^{1,\infty}(\RR^d) \, \text{ is nonnegative},
\endaligned
\right.
\end{equation}
 Our main result read as follows.
\begin{theorem}\label{result}
Let $\chi,\,\tau, \,\mu>0$, $\lambda\ge 0$ and $d\ge 1$.  Assume the initial data $(n_0, c_0)$ satisfy condition \eqref{ini}. Then there exists $\mu_0>0$ such that for all $\mu\ge\mu_0$, problem \eqref{che}-\eqref{che-I} possesses a unique nonnegative classical global-in-time solution $$(n, c)\in C_{\textnormal{w}}([0,\infty);L^\infty(\RR^d))\times C_{\textnormal{w}}([0,\infty);W^{1,\infty}(\RR^d))$$ which enjoys global-in-time boundedness, that is, there exits a positive constant $C $ only depending on $\chi,\,\tau, \,\lambda,\, \mu,\, d$, $\|n_0\|_{  L^\infty(\RR^d)}$ and $\|c_0\|_{W^{1,\infty}(\RR^d)}$ such that for all $t>0,$
\[\|n(t)\|_{L^\infty(\RR^d)}+\|  c(t)\|_{W^{1,\infty}(\RR^d)}\le C.\]
\end{theorem}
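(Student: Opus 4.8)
The plan is to combine a standard local existence theory with a genuinely coupled, spatially localized energy method, using the large logistic damping to absorb the supercritical chemotactic production at every regularity level. First I would establish local-in-time well-posedness: writing the system in its Duhamel (mild) form and running a contraction argument in $C([0,T];L^\infty(\RR^d))\times C([0,T];W^{1,\infty}(\RR^d))$, one obtains a unique nonnegative classical solution on a maximal interval $[0,T_{\max})$ together with the continuation criterion that $T_{\max}<\infty$ forces $\limsup_{t\uparrow T_{\max}}(\|n(t)\|_{L^\infty}+\|\nabla c(t)\|_{L^\infty})=\infty$. The entire remaining task is then to prove an a priori bound on $\|n(t)\|_{L^\infty}+\|\nabla c(t)\|_{L^\infty}$ that is independent of $T_{\max}$; this simultaneously yields global existence and the asserted uniform boundedness.

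The core of the argument is a family of uniformly-local $L^p$ estimates. Because no global integration is available on $\RR^d$ (there is no mass control, and the clean scalar inequality \eqref{z} is destroyed once $\tau\neq1$), I would fix a smooth cutoff $\psi$ supported in a unit ball $B(x_0,1)$ and estimate $\int n^p\psi^2$ with constants uniform in the center $x_0$ — this is the precise sense in which the $L^\infty$-norm acts locally. Testing the $n$-equation against $n^{p-1}\psi^2$, integrating by parts, and using Young's inequality on the chemotactic term $\chi\int n\nabla c\cdot\nabla(n^{p-1}\psi^2)$ absorbs the gradient into the diffusion and leaves a term of type $\int n^p|\nabla c|^2\psi^2$ together with cutoff remainders, while the logistic contribution supplies the damping $-\mu\int n^{p+1}\psi^2$. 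For $\mu$ large the damping dominates, and one closes a differential inequality of the shape
\[\frac{\dd}{\dd t}\int n^p\psi^2+c_1\int n^{p+1}\psi^2\le C_p\Big(1+\sup_{x_0}\|\nabla c\|_{L^\infty(B(x_0,1))}^2\int n^p\psi^2\Big),\]
which yields $L^p_{\uloc}$-bounds uniform in time, provided $\nabla c$ is controlled.

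The two estimates are coupled through parabolic regularity for the second equation: since $\tau\partial_t c-\Delta c=-c+n$ with $n\ge0$, the uniformly-local smoothing estimates control $\sup_{x_0}\|\nabla c(t)\|_{L^\infty(B(x_0,1))}$ in terms of $\|c_0\|_{W^{1,\infty}}$ and $\sup_{s\le t}\|n(s)\|_{L^p_{\uloc}}$ once $p>d$. I would therefore run a finite bootstrap, beginning from a low-order $L^2_{\uloc}$ space-time bound that the damping $-\mu\int n^2\psi^2$ furnishes (using $c_0\in W^{1,\infty}$ and local smoothing to control $\nabla c$ in $L^2_{\uloc}$ at the base level), and successively raising $p$ until $p>d$, at which stage $\nabla c$ becomes bounded uniformly in $t$ and $x_0$. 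With $\nabla c\in L^\infty$ in hand, the term $\int n^p|\nabla c|^2\psi^2$ is genuinely lower order, and a Moser-type iteration over $p=2^k\to\infty$ — tracking the growth of the constants $C_p$ and again exploiting the large $\mu$ at every step — upgrades the uniform $L^p_{\uloc}$ control to a uniform $L^\infty$ bound on $n$; feeding this back into the $c$-equation bounds $\nabla c$, and the continuation criterion closes the proof.

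I expect the main obstacle to be the coupling in the regime $\tau\neq1$: there the cancellation underlying \eqref{z} fails, leaving a divergence-form remainder $\tfrac{1-\tau}{2}\Delta|\nabla c|^2$ that no maximum principle can exploit directly. Handling it forces the genuinely coupled, localized iteration described above, in which the $\nabla c$-estimate and the $n$-estimate must be advanced in tandem with constants uniform in the cutoff center $x_0$, and in which the largeness of $\mu$ is precisely what makes every level of the bootstrap and of the Moser iteration close.
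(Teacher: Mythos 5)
Your overall architecture (local well-posedness, uniformly local $L^p$ estimates with cutoffs centered at arbitrary $x_0$, large $\mu$ to absorb the chemotactic production, bootstrap to $L^\infty$, continuation criterion) matches the paper's strategy in spirit. But there is a genuine gap at the base level of your bootstrap, and it is exactly where the paper's one new idea lives. You propose to start from an $L^1_{\uloc}$/space-time-$L^2_{\uloc}$ bound on $n$ furnished by the damping, ``using $c_0\in W^{1,\infty}$ and local smoothing to control $\nabla c$ in $L^2_{\uloc}$ at the base level.'' This is circular for $d\ge 2$: testing the $n$-equation with $\psi^2$ produces the term $\chi\int n\,\nabla(\psi^2)\cdot\nabla c$, which requires an a priori bound on $\|\nabla c\|_{L^2_{\uloc}}$; but the Duhamel representation $\nabla c=e^{\frac{t}{\tau}(\Delta-1)}\nabla c_0+\frac1\tau\int_0^t\nabla e^{\frac{t-s}{\tau}(\Delta-1)}n\,\dd s$ cannot convert $n\in L^\infty_tL^1_{\uloc}$ into $\nabla c\in L^\infty_tL^2_{\uloc}$ (the kernel singularity $(t-s)^{-\frac12-\frac d4}$ is non-integrable for $d\ge2$), nor does the space-time $L^2$ bound help (the $(t-s)^{-1/2}$ factor is not square-integrable near $s=t$). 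The energy route is no better on its own: testing the $c$-equation yields $-\int n\Delta c\,\psi^2$, whose absorption into $\int|D^2c|^2\psi^2$ via $|\Delta c|^2\le d|D^2c|^2$ costs $\frac d4\int n^2\psi^2$, which must be paid for by the logistic damping in the \emph{other} equation. The paper resolves this by estimating the single localized functional $\int_{\RR^d}\bigl(n+\frac{\chi\tau}{2}|\nabla c|^2\bigr)\phi^R_{x_0}\,\dd x$ (Proposition \ref{n1c2}), in which this cancellation is exact and the damping survives precisely when $\mu>\frac{d\chi}{4}$; this coupled quantity, not semigroup smoothing, is what closes the base level, and your proposal never identifies it even though you correctly flag that the $\tau\ne1$ coupling is the main obstacle.

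A secondary structural difference: above the base level the paper still does not rely on parabolic smoothing until the very last step. Its $L^{2k}_{\uloc}$ energy estimate for $\nabla c$ produces the term $\int n^2|\nabla c|^{2k-2}\phi^R_{x_0}$, which cannot be absorbed by $\mu\int n^{k+1}\phi^R_{x_0}$ alone; the paper therefore builds the weighted ladder $\int|\nabla c|^{2k}\phi^R_{x_0}+\sum_{j=1}^k b_j\int n^j|\nabla c|^{2k-2j}\phi^R_{x_0}$ (Propositions \ref{lemmank}--\ref{sumj}) with coefficients $b_j$ chosen so the cross terms cascade into the damping, and only then converts $L^k_{\uloc}$, $k>d$, into $L^\infty$ via Littlewood--Paley estimates in uniformly local spaces (Lemma \ref{GYong}, Proposition \ref{prop-bound}). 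Your Duhamel-based bootstrap between $\|n\|_{L^k_{\uloc}}$ and $\|\nabla c\|_{L^{2k}_{\uloc}}$ is plausible once $k>d/2$, so the upper part of your scheme could in principle be made to work and would be somewhat lighter than the paper's ladder; but without the coupled base-level estimate there is nothing to bootstrap from, so as written the proof does not close. You should also note that the paper first truncates the initial data (system \eqref{che-M}--\eqref{che-I-M}) so that integrations by parts are justified, and recovers the stated class $L^\infty\times W^{1,\infty}$ by a compactness passage $M\to\infty$; your contraction directly in $L^\infty\times W^{1,\infty}$ would need a substitute for this step.
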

\begin{remark}
 Compared with Winkler's result \cite{Win10}, we get the same result in Theorem \ref{result} for Cauchy problem, but the method is very different.
 To overcome difficulty caused by the unbounded domain $\RR^d$, we develop localization in space of solution in order to exploit damping effect  of logistic source, which  helps in turn to establish global-in-time boundedness of solution. And we believe that our method here can be used the other related model in the unbounded domain with logistic source.
\end{remark}
\begin{remark}
For $d=1,2$, we can remove  the additional condition $\mu\ge\mu_0$ as in \cite{Win10} for the initial boundary value problem.
\end{remark}
Let us  now sketch the  main idea in our paper briefly.
Relevant literatures (e.g. \cite{Win10, Xi18, Xiang18}) reveal that the upper bounds on $n$ and $\nabla c$ for system \eqref{KSL} in a smooth bounded domain $\Omega$ depend on $|\Omega|$. This intuitively shows us that the methods that deal with bounded domain cases fail to directly apply in Cauchy problem. Roughly speaking, in a smooth bounded domain $\Omega$, from E.q. $\eqref{KSL}_1$ we can infer by  the damping effect of the logistic source that
\[\frac{\dd }{\dd t}\int_{\Omega}n \,\dd x+\int_{\Omega} n \,\dd x=\int_{\Omega}(\lambda+1)n-\mu n^2 \,\dd x\le \frac{(\lambda+1)^2|\Omega|}{4\mu},\]
where $n\ge 0$, moreover, we get the global-in-time upper bound
$$\|n(t)\|_{L^1(\Omega)}\le \max\left\{\|n_0\|_{L^1(\Omega)},\,\, \frac{(\lambda+1)^2|\Omega|}{4\mu}\right\}.$$
In the same way, one can obtain the global-in-time upper bound for $\|\nabla c\|_{L^2(\Omega)}$. Furthermore, both uniformly bounds can act as a starter for improving regularity to $L^p\times \dot W^{1,2p}$ by Moser-type iteration, within which for some large $p>1$ one enables to infer the uniform boundedness ($L^\infty$-norm) by standard $L^p$-$L^q$ estimates of heat semigroup.
Unfortunately, these   global-in-time upper bounds strongly depend on the measure of  domain $\Omega$. So it does not hold for the unbound domain such as the entire space $\RR^d.$ The main reason is that  logistic term can not provide the damping effect in $\RR^d$. Without damping effect, we only get the following  upper bounds which  depend on time $t$ (cf.  Proposition \ref{nL1cL2}), that is,
\[\|n(t)\|_{L^1(\RR^d)} \le e^{\lambda t}\|n_0\|_{L^1(\RR^d)}\,\, \text{ and }\,\,\|\nabla c(t)\|^2_{L^2(\RR^d)}\le\|\nabla c_0\|^2_{L^2(\RR^d)}+\frac{e^{\lambda t}}{\tau}\|n_0\|_{L^1(\RR^d)}.\]
According to this, we can show the $L^\infty(\RR^d)$-norm of $(n,\nabla c)$ enjoys upper bound dependent of $t$ by repeating the iteration procedure in \cite{Win10}. And then we get the global-in-time well-posedness for Cauchy problem via the standard argument. However the problem concerning  global-in-time boundedness of solution is not solved.  To do this, we have a new observation  that $L^\infty$-norm characterizes a ``local property'' to some extend, that is
$  L^\infty(\RR^d)\hookrightarrow L^p_{\uloc}(\RR^d)$ for each $1\le p<\infty$.
This property inspires us to perform  the same procedure in \cite{Win10} in the uniformly local space framework.  Of course, the implementation of the process is not effortless. It takes some new techniques to cope with new difficulties (compared to bounded domain cases) to preform this process. For example,  the space localization equation of $n$ takes
 \[(\phi n)_t-\Delta(\phi n)=-\nabla\cdot\big((\phi n)\nabla c\big)+\lambda(\phi n)-\mu \phi n^2-\Delta\phi n-2\nabla n\cdot\nabla\phi+n\nabla\phi\cdot\nabla c,\]
 where $\phi$ is a smooth cut-off function. Moreover, it is achieved  by integrating in space variable that
 \[\frac{\dd}{\dd t}\int_{\RR^d}\phi n\dd x= \lambda\int_{\RR^d}\phi n\dd x-\mu\int_{\RR^d}\phi n^2\dd x+\int_{\RR^d}\Delta\phi n\dd x+\int_{\RR^d}n\nabla\phi\cdot\nabla c\dd x.  \]
 Since lack of regularity of $\nabla c$, it seems impossible to establish the natural estimate of $ \|n\|_{L^1_{\uloc}(\RR^d)}$ independently, which is cornerstone of ``a priori" estimates in previous results. To accomplish  this task, by taking advantage of coupling structure, a new combination of $L^1_{\uloc}(\RR^d)$-norm of $n$ and $L^2_{\uloc}(\RR^d)$-norm of $\nabla c$ is applied, similar to \eqref{z}. As a result, it  reduces redundant items and close the estimates by choosing  a appropriate test function.

The rest of paper is organized as follows. Section 2 is devoted to proving the local-in-time solution. In section 3, we study the global-in-time well-posedness and boundedness of solution to Cauchy problem.

\noindent\textbf{Notation.} Throughout the paper, $\mathbb{R}^{+}=(0, \infty)$ and $C$ stands for a "generic" constant which may changes from line to line. For $p, q \in[1, \infty]$, the usual Lebesgue space is denoted by $L^{p}\left(\mathbb{R}^{d}\right)$ and $\|\cdot\|_{L_{t}^{q} L^{p}(\RR^d)}$ denotes the norm of $\displaystyle\left(\int_{0}^{t}\|\cdot\|_{L^p(\RR^d)}^{q} \,\textnormal{d} s\right)^{\frac{1}{q}}$. Let us denote
 $$B_{r}\left(x_{0}\right):=\left\{x \in \mathbb{R}^{d}:\left|x-x_{0}\right|<r\right\}.$$
For $1\leq p<\infty, $ we define
$$
\begin{aligned}
&\|f\|_{p, \lambda}:=\sup _{x \in \mathbb{R}^{d}}\left(\|f\|_{L^{p}\left(B_{\lambda}(x)\right)}\right)=\sup _{x \in \mathbb{R}^{d}}\left(\int_{|x-y|<\lambda}|f(y)|^{p} \mathrm{~d} y\right)^{1 / p}, \\
&L_{\uloc}^{p}\left(\mathbb{R}^{d}\right):=\left\{f \in L_{\mathrm{loc}}^{1}\left(\mathbb{R}^{d}\right) ;\|f\|_{p, 1}<\infty\right\}, \quad\|f\|_{L_{\uloc}^{p}\left(\mathbb{R}^{d}\right)}:=\|f\|_{p, 1} .
\end{aligned}
$$
 Finally, $\mathcal{D}\left(\mathbb{R}^{d}\right)$ is a space of smooth compactly supported functions on $\mathbb{R}^{d}$, and space $W^{s,p}\left(\mathbb{R}^{d}\right)$ is the general Sobolev spaces with $1\leq p\leq \infty$ and $s\geq0$.
%%%%%%%%%%%%%%%%%%%%%%%%%%%%%%%%%%%%%%%%%%%%%%%%%%%%%%%%%%%%%%%%%%%%%%%%%%%%%%%%%%%%%%%%%%%%%%%%%%%%%%
\section{Local-in-time well-posedness for Cauchy problem}
%%%%%%%%%%%%%%%%%%%%%%%%%%%%%%%%%%%%%%%%%%%%%%%%%%%%%%%%%%%%%%%%%%%%%%%%%%%%%%%%%%%%%%%%%%%%%%%%%%%%%%
In this section, we are going to show local-in-time existence and uniqueness of the classical solution  to system~\eqref{che}-\eqref{che-I}. Let us firstly give the statement concerning local-in-time well-posedness.
\begin{theorem} [Local-in-time well-posedness]\label{thm-local} Let $\chi,\,\tau,\, \mu>0$, $\lambda\ge 0, $ $d\ge 1$ and the initial data $(n_0, c_0)$ satisfy
 \begin{equation}\label{ini-I}
\left\{ \aligned
&n_0\in  L^1(\RR^d)\cap L^\infty(\RR^d)\,\text{ is nonnegative},\\
&c_0\in H^1(\RR^d)\cap W^{1,\infty}(\RR^d) \, \text{ is nonnegative}.
\endaligned
\right.
\end{equation}Then there exist a maximal $T_{\max}\in (0, \infty]$ and a uniquely nonnegative classical solution $(n, c)$ to  system~\eqref{che}-\eqref{che-I} satisfying
\begin{align*}
&n\in C^0( [0, T_{\max}); L^1(\RR^d)\cap L^\infty(\RR^d))\cap C^{2,1}(\RR^d\times (0, T_{\max})),\\
&c\in C^0([0, T_{\max});  H^1(\RR^d)\cap W^{1,\infty}(\RR^d))\cap C^{2,1}(\RR^d\times (0, T_{\max})).
\end{align*}
 Moreover, if $T_{\max}<\infty$, then
\begin{equation}\label{blowup1}
 \lim_{T\to T_{ \textnormal{max}}-}\left(\|n(\cdot, t)\|_{L^\infty(\RR^d)} +\|c(\cdot, t)\|_{W^{1,\infty}(\RR^d)}\right)=\infty.
\end{equation}
\end{theorem}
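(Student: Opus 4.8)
The plan is to construct the solution as the fixed point of the Duhamel (mild) formulation and then bootstrap its regularity. Writing $\{e^{t\Delta}\}_{t\ge0}$ for the heat semigroup on $\RR^d$ and $\{e^{\frac{t}{\tau}(\Delta-1)}\}_{t\ge0}$ for the semigroup generated by $\tfrac1\tau(\Delta-1)$ in the second equation, I would recast \eqref{che}--\eqref{che-I} as the coupled integral system
\begin{align*}
n(t)&=e^{t\Delta}n_0+\int_0^t e^{(t-s)\Delta}\big(-\chi\nabla\cdot(n\nabla c)+\la n-\mu n^2\big)(s)\dd s,\\
c(t)&=e^{\frac{t}{\tau}(\Delta-1)}c_0+\frac1\tau\int_0^t e^{\frac{t-s}{\tau}(\Delta-1)}n(s)\dd s.
\end{align*}
I would fix $R>0$ (comparable to the initial norm) and a small $T>0$, and work in the closed ball of radius $R$ of
\[
X_T:=C\big([0,T];L^1(\RR^d)\cap L^\infty(\RR^d)\big)\times C\big([0,T];H^1(\RR^d)\cap W^{1,\infty}(\RR^d)\big),
\]
normed by $\sup_{0\le t\le T}\big(\|n(t)\|_{L^1}+\|n(t)\|_{L^\infty}+\|c(t)\|_{H^1}+\|c(t)\|_{W^{1,\infty}}\big)$, and define the solution map $\Phi(n,c)=(\tilde n,\tilde c)$ by the two right-hand sides above.

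The analytic engine consists of the standard $L^p$--$L^q$ smoothing estimates
\[
\|e^{t\Delta}f\|_{L^q}\le C\,t^{-\frac d2(\frac1p-\frac1q)}\|f\|_{L^p},\qquad \|\nabla e^{t\Delta}f\|_{L^q}\le C\,t^{-\frac12-\frac d2(\frac1p-\frac1q)}\|f\|_{L^p},
\]
the analogous bounds for $e^{\frac{t}{\tau}(\Delta-1)}$ carrying the extra decay factor $e^{-t/\tau}$. Since the exponent $\tfrac12$ produced by the derivative in the chemotactic term $\nabla\cdot(n\nabla c)$ is integrable near $s=t$, I would estimate each Duhamel contribution with a power of $T$ to spare: the chemotaxis term via $\|n\nabla c\|_{L^p}\le\|n\|_{L^p}\|\nabla c\|_{L^\infty}$, the logistic quadratic via $\|n^2\|_{L^p}\le\|n\|_{L^\infty}\|n\|_{L^p}$, and the crucial bound on the gradient of $c$ via $\|\nabla c(t)\|_{L^\infty}\le\|\nabla c_0\|_{L^\infty}+C\int_0^t(t-s)^{-1/2}e^{-(t-s)/\tau}\|n(s)\|_{L^\infty}\dd s$. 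Collecting these shows that for $T$ small, $\Phi$ maps the ball into itself; differences $\Phi(n_1,c_1)-\Phi(n_2,c_2)$ are controlled identically, every nonlinear term acquiring a factor $T^{1/2}$ or $T$, so $\Phi$ is a contraction. Banach's fixed-point theorem then gives a unique mild solution on a short interval.

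It remains to upgrade regularity, establish positivity, and extract the blow-up alternative. For $t>0$ the mild formulas and the above estimates place $\nabla c$ in a Hölder class, so applying interior parabolic Schauder and $L^p$ theory iteratively to the two (now linear, with smooth coefficients) equations yields $n,c\in C^{2,1}(\RR^d\times(0,T))$. Nonnegativity follows from the maximum principle: $c_0\ge0$ together with the nonnegative source $n$ forces $c\ge0$, while rewriting the $n$-equation as $\partial_t n-\Delta n+\chi\nabla c\cdot\nabla n=(\la-\chi\Delta c-\mu n)\,n$ exhibits $n\equiv0$ as a subsolution, whence $n_0\ge0$ gives $n\ge0$ (alternatively positivity propagates through the Picard iterates). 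Finally, since the existence time $T$ depends only on $\|n\|_{L^\infty}+\|c\|_{W^{1,\infty}}$ of the data, the standard continuation argument lets me restart from times approaching the supremal existence time $T_{\max}$, and if $T_{\max}<\infty$ this forces $\|n(\cdot,t)\|_{L^\infty}+\|c(\cdot,t)\|_{W^{1,\infty}}\to\infty$, which is \eqref{blowup1}.

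The main obstacle I anticipate is the first-order chemotactic nonlinearity $\nabla\cdot(n\nabla c)$, which is only controllable once $\nabla c$ is tracked in $L^\infty(\RR^d)$; this couples the two equations tightly and requires the gradient-smoothing estimate with its $t^{-1/2}$ singularity to be integrated against $\|n\|_{L^\infty}$ while simultaneously propagating $n$ in \emph{both} $L^1$ and $L^\infty$ on the unbounded domain, where no domain-size or compactness shortcut is available. Keeping all four norms closed simultaneously so that $\Phi$ is a genuine self-map, and then justifying the passage from the mild to the classical solution uniformly in space, are the delicate points; the quadratic logistic term $-\mu n^2$ is comparatively harmless on a short time interval, as it is locally Lipschitz on bounded subsets of $L^\infty$.
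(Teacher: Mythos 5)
Your overall strategy --- Duhamel formulation, Banach fixed point in $C([0,T];L^1\cap L^\infty)\times C([0,T];H^1\cap W^{1,\infty})$ using $L^p$--$L^q$ heat-semigroup smoothing, bootstrap to $C^{2,1}$, maximum principle for nonnegativity, and continuation --- is exactly the route the paper takes, and the self-map/contraction estimates you sketch match the paper's almost line for line. There is, however, one genuine gap, in the derivation of the blow-up alternative \eqref{blowup1}.

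You assert that ``the existence time $T$ depends only on $\|n\|_{L^\infty}+\|c\|_{W^{1,\infty}}$ of the data,'' but this does not follow from your own construction: you run the contraction in the ball of radius $R$ of $X_T$, and the smallness condition on $T$ (e.g.\ through the terms $\mu T^{1/2}\|n\|_{L^1}^{1/2}\|n\|_{L^\infty}^{1/2}\|\nabla c\|_{L^2}$ and $\frac1\tau\int_0^t e^{-(t-s)/\tau}\|n(s)\|_{L^2}\,\dd s$ feeding the $H^1$ norm of $c$) involves the $L^1(\RR^d)$ norm of $n$ and the $H^1(\RR^d)$ norm of $c$ as well. The continuation argument you describe therefore only yields that the \emph{full} norm $\|n\|_{L^1}+\|n\|_{L^\infty}+\|c\|_{H^1}+\|c\|_{W^{1,\infty}}$ must blow up as $t\nearrow T_{\max}$, not the weaker quantity appearing in \eqref{blowup1}. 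To close this you need an extra step, which the paper supplies as Proposition \ref{nL1cL2}: a priori Gr\"onwall-type bounds showing $\|n(t)\|_{L^1(\RR^d)}\le e^{\lambda t}\|n_0\|_{L^1(\RR^d)}$ and corresponding $L^2$/$H^1$ energy bounds for $c$ and $\nabla c$ on any finite time interval (obtained by testing the equations against cut-off functions and using $n\ge0$ and the sign of the logistic term). These exclude finite-time blow-up of the $L^1$ and $H^1$ components, so that \eqref{blowup1} follows from the blow-up of the full norm. Without this ingredient the stated criterion is not proved.

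Two smaller points. First, the Banach fixed point gives uniqueness only within the chosen ball; the theorem claims uniqueness of the classical solution, which the paper establishes separately by an energy estimate for the difference of two solutions (localized by cut-off functions to justify the integrations by parts on $\RR^d$). Second, your maximum-principle argument for $n\ge0$ uses the coefficient $\la-\chi\Delta c-\mu n$, which requires pointwise control of $\Delta c$; the paper instead avoids this by testing against $\phi^2 n^-$ with a spatial cut-off, which needs only $\nabla c\in L^\infty$. Both are repairable by standard means, but they should be addressed explicitly.
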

We will proceed to prove it step by step by using the standard argument as used in \cite{Win20}.
 \noindent \textbf{Step 1: Existence}.\,\, According to hypotheses \eqref{ini-I}, there exists a constant $M>0$ such that the initial data satisfy
 \[\|n_0\|_{L^1(\RR^d)}+\|n_0\|_{L^\infty(\RR^d)}\le M\]
 and \[\|c_0\|_{H^1(\RR^d)}+\|c_0\|_{W^{1,\infty}(\RR^d)}\le M.\]
For small $T\in (0, 1)$, we choose the following space $X$ as the solution space:
\[X_T:=C^0\big([0, T];\, \left(  L^1(\RR^d)\cap L^\infty(\RR^d))\times ( H^1(\RR^d)\cap W^{1,\infty}(\RR^d))\right),\]
 and the closed subset $S$ is defined by
\[
\begin{split}S:=\Big\{(n, c)\in X_T\Big|\,\, &\|n\|_{L^\infty\big([0, T]; L^1(\RR^d)\big)}+\|n\|_{L^\infty\big([0, T]; L^\infty(\RR^d)\big)}\le 2M,\\& \|c\|_{L^\infty\big([0, T]; H^1(\RR^d)\big)}+\|c\|_{L^\infty\big([0, T]; W^{1,\infty}(\RR^d)\big)}\le 2M\Big\}.
\end{split}\]
Next, we consider  a mapping $\Phi=(\Phi_1, \Phi_2)$ on $S$ defined by
\begin{align*}
\Phi_1(n,c)=&e^{t\Delta} n_0-\chi\int_0^t e^{(t-s)\Delta} \nabla\cdot(n\nabla c)\dd s+\lambda\int_0^t e^{(t-s)\Delta} n\dd s -\mu\int_0^t e^{(t-s)\Delta} n^2\dd s
\end{align*}
and
\begin{align*}
&\Phi_2(n,c)=e^{\frac{t}{\tau}(-1+\Delta)}c_0+\frac{1}{\tau}\int_0^te^{\frac{(t-s)}{\tau}(-1+\Delta)}n\dd s
\end{align*}
for each $t\in[0, T].$
It is easy to check that $t\mapsto e^{t\Delta} n_0$ belongs to $C^0([0, \infty); L^1(\RR^d)\cap L^\infty(\RR^d))$ and $t\mapsto e^{t(-1+\Delta)} c_0$ belongs to $C^0([0, \infty); H^1(\RR^d)\cap W^{1,\infty}(\RR^d))$. Collecting with some basic regularity properties concerning semigroup $e^{t\Delta}\,(e^{t(-1+\Delta)})$, for example established in \cite[Lemma 2.1]{Win20}, we can deduce that $\Phi$ maps $S$ into $X_T$.
Taking advantage of standard $L^p$-$L^q$ estimate for heat semigroup, we can show that  for all $t\in [0, T]$,
\begin{align*}
 \|\Phi_1(n,c)(t)\|_{L^1(\RR^d)}
\le&\|n_0\|_{L^1(\RR^d)}+C_1\chi\int_0^t(t-s)^{-\frac{1}{2}}\|n(s)\|_{L^2(\RR^d)}\|\nabla c(s)\|_{L^2(\RR^d)}\dd s
\\&+\lambda\int_0^t\|n(s)\|_{L^1(\RR^d)}\dd s+\mu \int_0^t\|n(s)\|^2_{L^2(\RR^d)}\dd s\\
\le &\|n_0\|_{L^1(\RR^d)}+C_1\chi T^{\frac{1}{2}}\|n\|^{\frac{1}{2}}_{L^\infty_T( L^1(\RR^d))}\|n\|^{\frac{1}{2}}_{L^\infty_T(L^\infty(\RR^d))}\|\nabla c\|_{L^\infty_T(L^2(\RR^d))}
\\&+\lambda T\|n\|_{L^\infty_T( L^1(\RR^d))}+\mu T\|n\|_{L^\infty_T( L^1(\RR^d))}\|n\|_{L^\infty_T(L^\infty(\RR^d))}.
\end{align*}
Similarly, by H\"{o}lder's and Young's inequalities, it yields that for $0\le t\le T$,
\begin{align*}
 \|\Phi_1(n,c)(t)\|_{L^\infty(\RR^d)}
\le & \|n_0\|_{L^\infty(\RR^d)}+C_2\chi\int_0^t(t-s)^{-\frac{1}{2}}\|n(s)\|_{L^\infty(\RR^d)}\|\nabla c(s)\|_{L^\infty(\RR^d)}\dd s\\&+\lambda\int_0^t\|n(s)\|_{L^\infty(\RR^d)}\dd s+\mu \int_0^t\|n(s)\|^2_{L^\infty(\RR^d)}\dd s\\
\le &\|n_0\|_{L^\infty(\RR^d)}+C_2\chi T^{\frac{1}{2}}\|n\|_{L^\infty_T( L^\infty(\RR^d))}\|\nabla c\|_{L^\infty_T( L^\infty(\RR^d))}\\&+\lambda T\|n\|_{L^\infty_T( L^\infty(\RR^d))}+\mu T\|n\|^2_{L^\infty_T(L^\infty(\RR^d))}.
\end{align*}
Therefore, adding the above two inequality leads to
\begin{align*}
&\|\Phi_1(n,c)(t)\|_{L^1(\RR^d)}+\|\Phi_1(n,c)(t)\|_{L^\infty(\RR^d)}\\
\le& \|n_0\|_{L^1(\RR^d)}+\|n_0\|_{L^\infty(\RR^d)}
+C_1\chi T^{\frac{1}{2}}\left(\|n\|_{L^\infty_T( L^1(\RR^d))}+\|n\|_{L^\infty_T(L^\infty(\RR^d))}\right)\|\nabla c\|_{L^\infty_T( L^2(\RR^d))}\\
&+C_2\chi T^{\frac{1}{2}}\|n\|_{L^\infty_T( L^\infty(\RR^d))}\|\nabla c\|_{L^\infty_T(L^\infty(\RR^d))}
+\lambda T\left(\|n\|_{L^\infty_T( L^1(\RR^d))}+\|n\|_{L^\infty_T(L^\infty(\RR^d))}\right)\\
&+\mu T\|n\|_{L^\infty_T( L^\infty(\RR^d))}\left(\|n\|_{L^\infty_T( L^1(\RR^d))}+\|n\|_{L^\infty_T( L^\infty(\RR^d))}\right)\\
\le &M+4C_1\chi T^{\frac{1}{2}}M^2+4C_2\chi T^{\frac{1}{2}}M^2+2\lambda TM+4\mu T M^2.
\end{align*}
If we choose
$$T_1=\min\left\{1,\, \frac{1}{4\lambda},\, \big(8M(C_1\chi+C_2\chi+\mu)\big)^{-\frac{1}{2}}\right\}, $$
it is easy to check that for $0<t\le T_1$,
$$\|\Phi_1(n,c)(t)\|_{L^1(\RR^d)}+\|\Phi_1(n,c)(t)\|_{L^\infty(\RR^d)}\le 2M.$$
In the similar fashion as above, we can show
\begin{align*}
&\|\Phi_2(n,c)(t)\|_{H^1(\RR^d)}+\|\Phi_2(n,c)(t)\|_{W^{1,\infty}(\RR^d)}\\
\le& e^{-\frac{t}{\tau}}\left(\|c_0\|_{H^1(\RR^d)}+\|c_0\|_{W^{1,\infty}(\RR^d)}\right)+\frac{1}{\tau}\int_0^t e^{-\frac{t-s}{\tau}}\left(\|n(s)\|_{L^2(\RR^d)}+\|n(s)\|_{L^\infty(\RR^d)}\right)\dd s\\
&+\frac{C_1}{{{\tau}}}\int_0^t \left(\frac{t-s}{\tau}\right)^{-\frac{1}{2}} e^{-\frac{t-s}{\tau}}\left(\|n(s)\|_{L^2(\RR^d)}+\|n(s)\|_{L^\infty(\RR^d)}\right)\dd s\\
\le&\left(\|c_0\|_{H^1(\RR^d)}+\|c_0\|_{W^{1,\infty}(\RR^d)}\right)+2\left(\frac{T}{\tau}
+C_1\left(\frac{T}{\tau}\right)^{\frac{1}{2}}\right)\left(\|n\|_{L^\infty_T( L^1(\RR^d))}+\|n\|_{L^\infty_T( L^\infty(\RR^d))}\right)\\
\le&M+4M\bigg(\frac{T}{\tau}+C_1\left(\frac{T}{\tau}\right)^{\frac{1}{2}}\bigg).
\end{align*}
Taking $$T_2=\min\left\{1, \,\left(\frac{\tau}{4(1+C_1\sqrt{\tau})}\right)^2\right\},$$ we obtain that $$\|\Phi_2(n,c)(t)\|_{H^1(\RR^d)}+\|\Phi_2(n,c)(t)\|_{W^{1,\infty}(\RR^d)}\le 2M.$$
Then $\Phi(t)$ maps $S$ into itself for $t\le T_0:=\min\{T_1, T_2\}$.
Next, we show $\Phi$ is a contraction mapping in a short time. For any couple $(n_1, c_1)\in S$ and couple $(n_2, c_2)\in S$, an easy computation yields that
\begin{align*}
&\|\Phi_1(n_1, c_1)(t)-\Phi_1(n_2, c_2)(t)\|_{L^1(\RR^d)}\\
\le& \chi\int_0^t\left\|e^{(t-s)\Delta}\nabla \cdot\left((n_1(s)\nabla c_1(s))-(n_2(s)\nabla c_2(s))\right)\right\|_{L^1(\RR^d)}\dd s\\
&+\lambda\int_0^t\left\|e^{(t-s)\Delta}\left(n_1(s)-n_2(s)\right)\right\|_{L^1(\RR^d)}\dd s+\mu\int_0^t\left\|e^{(t-s)\Delta}\left(n^2_1(s)-n^2_2(s)\right)\right\|_{L^1(\RR^d)}\dd s\\
\le &\chi\int_0^t\left\|e^{(t-s)\Delta}\nabla \cdot((n_1(s)-n_2(s))\nabla c_1(s))\right\|_{L^1(\RR^d)}\dd s\\&+\chi\int_0^t\left\|e^{(t-s)\Delta}\nabla \cdot(n_2(s)\nabla (c_1(s)-c_2(s))\right\|_{L^1(\RR^d)}\dd s\\
&+\lambda T\|n_1-n_2\|_{L^\infty_T(L^1(\RR^d))}+\mu T\|n_1+n_2\|_{L^\infty_T( L^\infty(\RR^d))}\|n_1-n_2\|_{L^\infty_T( L^1(\RR^d))}\\
\le& C_1\chi T^{\frac{1}{2}}\|n_1-n_2\|_{L^\infty_T( L^2(\RR^d))}\| c_2\|_{L^\infty_T(\dot{H}^1(\RR^d))}
+C_1\chi T^{\frac{1}{2}}\|n_2\|_{L^\infty_T( L^2(\RR^d))}\|c_1-c_2\|_{L^\infty_T(\dot{H}^1(\RR^d))}\\
&+\lambda T\|n_1-n_2\|_{L^\infty_T( L^1(\RR^d))}+\mu T\left(\|n_1\|_{L^\infty_T( L^\infty(\RR^d))}+\|n_2\|_{L^\infty_T( L^\infty(\RR^d))}\right)\|n_1-n_2\|_{L^\infty_T(L^1(\RR^d))}\\
\le&\left(4C_1M\chi T^{\frac{1}{2}}+\lambda T+4M\mu T\right)\|(n_1, c_1)-(n_2,c_2)\|_{X_T}.
\end{align*}
Similarly, we estimate
\begin{align*}
&\|\Phi_1(n_1, c_1)(t)-\Phi_1(n_2, c_2)(t)\|_{L^\infty(\RR^d)}\\
\le& C_4\chi T^{\frac{1}{2}-\frac{d}{2q}}\|n_1-n_2\|_{L^\infty_T( L^\infty(\RR^d))}\|\nabla c_2\|_{L^\infty_T( L^\infty(\RR^d))}\\&
+C_4\chi T^{\frac{1}{2}-\frac{d}{2q}}\|n_2\|_{L^\infty_T( L^\infty(\RR^d))}\|\nabla (c_1-c_2)\|_{L^\infty_T( L^\infty(\RR^d))}+\lambda T\|n_1-n_2\|_{L^\infty_T( L^\infty(\RR^d))}\\&+\mu T\left(\|n_1\|_{L^\infty_T( L^\infty(\RR^d))}+\|n_2\|_{L^\infty_T(L^\infty(\RR^d))}\right)\|n_1-n_2\|_{L^\infty_T( L^\infty(\RR^d))}\\
\le&\left(4C_4M\chi T^{\frac{1}{2}}+\lambda T+4M\mu T\right)\|(n_1, c_1)-(n_2,c_2)\|_{X_T},
\end{align*}
and
\begin{align*}
&\|\Phi_2(n_1, c_1)(t)-\Phi_2(n_2, c_2)(t)\|_{H^1(\RR^d)}+\|\Phi_2(n_1, c_1)(t)-\Phi_2(n_2, c_2)(t)\|_{W^{1,\infty}(\RR^d)}\\
\le&\frac{C_1}{\tau}\int_0^t \bigg(1+\left(\frac{t-s}{\tau}\right)^{-\frac{1}{2}}\bigg)\left(\|n_1(s)-n_2(s)\|_{L^2(\RR^d)}
+\|n_1(s)-n_2(s)\|_{L^\infty(\RR^d)}\right)\dd s\\
\le& C_5\left(\frac{ T}{\tau}+\left(\frac{T}{\tau}\right)^{\frac{1}{2}}\right)\|n_1-n_2\|_{X_T}.
\end{align*}
Taking
$$\widetilde{T}=\min\left\{T_0, \,{\left(16MC_4\chi+4\lambda+16M\mu\right)^{\frac{1}{2}-\frac{d}{2q}}}, \, \frac{\tau^2}{4C_5^2(1+\sqrt{\tau})^2}\right\},$$
then $\Phi$ is a contractive mapping on $S$. We apply the Banach fixed point theorem to obtain  that there exists $(n ,c)\in S$ such that $\Phi(n, c)=(n, c)$. Moreover, by standard arguments involving semigroup estimates (cf. Lemma 3.3 in \cite{FIWY16}), it can be checked that $(n, c)$ belongs to $C^{2,1}(\RR^d\times (0, \widetilde{T}))^2$ and solves problem \eqref{che}-\eqref{che-I} classically in $(0, \widetilde{T})\times\RR^d$.\\
\textbf{Step 2: Nonnegativity}\,\, We are going to prove that $n, c\ge0$ if initial data $n_0, c_0\ge 0$.
We firstly choose is a smooth cut-off function $\phi$ satisfying
\begin{equation}\label{eq-1-cutoff}
\phi(x)=\begin{cases}
1,\quad x\in B_R(0),\\
0,\quad x\in \mathbb{R}^d\backslash B_{2R}(0),
\end{cases}
\end{equation}
where $R>0$ is a real number.

According to the local property $n\in C^{2,1}(\RR^d\times (0, \widetilde{T})),$ the global  property $$\partial_t(\phi n),\,\,\,\phi\nabla n\in L^2(\RR^d)  $$ can be achieved by introducing the cut-off function $\phi$.
Let us denote  $$n^-=\max\{0, -n\}. $$
Multiplying E.q.$\eqref{che}_1$ by $\phi^2n^-$ and integrating, it yields that
\begin{align*}
&\frac{1}{2}\frac{\dd}{\dd t}\|\phi n^-(t)\|^2_{L^2(\RR^d)}+\|\phi\nabla n^-(t)\|^2_{L^2(\RR^d)}\\
\le& \|\nabla c\|_{L^\infty(\RR^d)}\|\phi\nabla n^-\|_{L^2(\RR^d)}\|\phi n^-\|_{L^{2}(\RR^d)}+\|n\|^2_{L^2(\RR^3)}\|\nabla\phi\|_{L^\infty(\RR^d)}\|\nabla c\|_{L^\infty(\RR^d)}\\
&+2\|n\|_{L^2(\RR^d)}\|\nabla\phi\|_{L^\infty(\RR^d)}\|\phi\nabla n^-\|_{L^2(\RR^d)}+\lambda\|\phi n^-\|^2_{L^2(\RR^d)}\\
\le&2 \|\nabla c\|^{2}_{L^\infty(\RR^d)}\|\phi n^-\|^{2}_{L^{2}(\RR^d)}+\frac{C}{R}\|n\|^2_{L^2(\RR^d)} \|\nabla c\|_{L^\infty(\RR^3)}+\frac{C}{R}\|n\|^2_{L^2(\RR^d)}+\frac{1}{2}\|\phi\nabla n^-\|^{2}_{L^2(\RR^d)}\\&+\lambda\|\phi n^-\|^2_{L^2(\RR^d)}.
\end{align*}
Since $(n,c)\in X_{\widetilde{T}}$, we obtain by  Gr\"{o}nwall's inequality that  for any $t<\widetilde{T}$,
\begin{align*}
\|\phi n^-(t)\|^2_{L^2(\RR^d)}\le \exp\left(\int_0^t2\|\nabla c(s)\|^{2}_{L^\infty(\RR^d)}\dd s+\lambda t\right)\|n^-_0\|^2_{L^2(\RR^d)} +\frac{C}{R}\widetilde{T}.
\end{align*}
Taking $R$ goes to infinite, we immediately get by the continuous property that $n(x, t)\ge 0$ in the domain $\RR^d\times(0,\widetilde{T})$.

Letting $\tilde{c}=e^tc,$ it follows from $\eqref{che}_2$ and the nonnegativity of $n$ that
\[\partial_t\tilde{c}-\Delta\tilde{c}=e^tn\geq0,\]
which means that $\tilde{c}$ is the super-solution of the linear heat equation. Since $\tilde{c}\in H^1(\RR^d)\cap W^{1,\infty}(\RR^d)),$ we get  by weak maximum principle for Cauchy problem that
\[\inf_{x\in\RR^d\times[0,\widetilde{T})}\tilde{c}(x,t)=\inf_{x\in\RR^d}\tilde{c}(x,0)=\inf_{x\in\RR^d} {c}_0(x).\]
Hence
\[c(x,t)=e^{-t}\tilde{c}(x,t)\geq e^{-t}\inf_{x\in\RR^d} {c}_0(x)\geq0\quad\text{for all}\quad(x,t)\in\RR^d\times\big(0,\widetilde{T}\big),\]
because $c_0\ge 0$.\\
\noindent\textbf{Step 3: Uniqueness}\,\,First of all, we denote the maximal lifespan by $T_{\max}$. Now we begin to prove the uniqueness of the solution of \eqref{che}.  Since
 $$(n,c)\in C^0\big([0, T_{\max});(L^1(\RR^d)\cap L^\infty(\RR^d))\times  H^1\cap W^{1,\infty}(\RR^d)\big) ,$$ we see that the couple $(n,c)$ vanishes to $0$ as $|x|\to \infty$. Therefore, the following estimates have no boundary term involving integrating by parts.
Let $(n , c)$ and $(\widetilde{n}, \widetilde{c})$ be the two solutions to system \eqref{che} in $\RR^d\times (0, T_{\max})$ with the same initial data. Set $\delta n=n-\widetilde{n}$ and $\delta c=c-\widetilde{c}$, we easily find that the couple $(\delta n,\delta c)$ satisfies
 \begin{equation}\label{KSL-diff}
\left. \aligned
  \partial_t\delta n-\Delta \delta n=& -\chi\nabla\cdot(\delta n\nabla c)-\chi\nabla\cdot(\widetilde{n}\nabla \delta c)+\lambda\delta n-\mu (n+\widetilde{n})\delta n\\
 \tau\partial_t\delta c-\Delta \delta c=&-\delta c+\delta n
\endaligned
\right\} \text{ in } \RR^d\times(0,T_{\max}),
\end{equation}
which is supplemented with the homogeneous initial data
\[(\delta n,\delta c)(x,0)=(0,0)\quad\text{in}\quad\RR^d.\]
Then we can show by using energy method and introducing the cut-off function $\phi$ defined in \eqref{eq-1-cutoff} that
\begin{align*}
&\frac{1}{2}\frac{\dd}{\dd t}\|\phi\delta n(t)\|^2_{L^2(\RR^d)}+\|\phi\nabla \delta n(t)\|^2_{L^2(\RR^d)}\\
\le&\chi\int_{\RR^d}\phi^2\delta n\nabla c\cdot\nabla\delta n\dd x+2\chi\int_{\RR^d}\phi\delta n\nabla c\cdot\nabla\phi\delta n\dd x+\chi\int_{\RR^d} \phi^2\widetilde{n}\nabla\delta c\cdot\nabla\delta n\dd x\\&+2\chi\int_{\RR^d} \phi \widetilde{n}\nabla\delta c\cdot\nabla\phi\delta n\dd x-2\int_{\RR^d}\phi\nabla\delta n\cdot\nabla\phi\delta n\mathrm{d}x\\
&+\|\phi\delta n\|^2_{L^2}(\lambda+\mu\|n\|_{L^\infty(\RR^d)}+\mu\|\widetilde{n}\|_{L^\infty(\RR^d)})\\
\le &{\chi}\|\nabla c\|_{L^\infty(\RR^d)}\|\phi\delta n\|_{L^{2}(\RR^d)}\|\phi\nabla\delta n\|_{L^2}+\chi\|\widetilde{n}\|_{L^\infty(\RR^d)}\|\phi\nabla\delta c\|_{L^2(\RR^d)}\|\phi\nabla \delta n\|_{L^2(\RR^d)}\\&+2\chi\|\nabla \phi\|_{L^\infty(\RR^d)}\|\delta n\|_{L^2(\RR^d)}\left(\|\nabla c\|_{L^\infty(\RR^d)}\|\phi\delta n\|_{L^2(\RR^d)}
 +\|\widetilde{n}\|_{L^\infty(\RR^d)}\|\phi\nabla\delta c\|_{L^2(\RR^d)}\right)
\\&+\|\nabla \phi\|_{L^\infty(\RR^d)}\|\phi\nabla\delta n\|_{L^2(\RR^d)}\|\delta n\|_{L^2(\RR^d)} +\|\phi\delta n\|^2_{L^2(\RR^d)}\left(\lambda+\mu\|n\|_{L^\infty(\RR^d)}+\mu\|\widetilde{n}\|_{L^\infty(\RR^d)}\right)\\
\le&\frac{1}{2}\|\phi\nabla \delta n\|^2_{L^2(\RR^d)}+C\chi^{2}\|\nabla c\|^{2}_{L^\infty(\RR^d)}\|\phi\delta n\|^{2}_{L^{2}(\RR^d)}+C\chi^2\|\widetilde{n}\|^2_{L^\infty(\RR^d)}\|\phi\nabla\delta c\|^2_{L^2(\RR^d)}\\&+\|\phi\delta n\|^2_{L^2(\RR^d)}\left(\lambda+\mu\|n\|_{L^\infty(\RR^d)}+\mu\|\widetilde{n}\|_{L^\infty(\RR^d)}\right)+\frac{C}{R},
\end{align*}
in the last line, we have used the fact
\[(n,c),\,(\widetilde{n}, \widetilde{c})\in C^0\big([0, T_{\max});(L^1(\RR^d)\cap L^\infty(\RR^d))\times  H^1\cap W^{1,\infty}(\RR^d)\big).\]
Similarly, with aid of H\"{o}lder's and Young's inequalities, one gets that
\begin{align*}
&\frac{\tau}{2}\frac{\dd }{\dd t}\|\phi\nabla\delta c(t)\|^2_{L^2(\RR^d)}+\|\phi\Delta\delta c(t)\|^2_{L^2(\RR^d)}+\|\phi\nabla\delta c(t)\|^2_{L^2(\RR^d)}\\=&\int_{\RR^d}\phi^2\nabla \delta n\cdot\nabla \delta c\dd x-2\int_{\RR^d}\phi\Delta \delta c\nabla\phi\cdot\nabla \delta c\dd x\\
\le& \|\phi\delta n\|^2_{L^2(\RR^d)}+\frac{1}{2}\|\phi\Delta\delta c\|^2_{L^2(\RR^d)}+8\|\nabla\phi\|^2_{L^\infty(\RR^d)}\| \nabla\delta c\|^2_{L^2(\RR^d)}.
\end{align*}
Therefore, combined the above two estimates shows that
\begin{align*}
&\frac{\dd}{\dd t}\left(\frac{1}{2}\|\phi\delta n(t)\|^2_{L^2(\RR^d)}+\frac{\tau}{2}\|\phi\nabla\delta c(t)\|^2_{L^2(\RR^d)}\right)\\
\le& \|\phi\delta n\|^{2}_{L^{2}(\RR^d)}\left(C\chi^{2}\|\nabla c\|^{2}_{L^\infty(\RR^d)}+\lambda+\mu\left(\|n\|_{L^\infty(\RR^d)}+\|\widetilde{n}\|_{L^\infty(\RR^d)}\right)+1\right) 
\\
&+C\chi^2\|\widetilde{n}\|^2_{L^\infty(\RR^d)}\|\phi\nabla\delta c\|^2_{L^2(\RR^d)}+\frac{C}{R}+\frac{C}{R^2}.
\end{align*}
Integrating the above inequality with respect to $t$ and then taking $R$ goes to infinite yields 
\begin{align*}
& \frac{1}{2}\|\delta n(t)\|^2_{L^2(\RR^d)}+\frac{\tau}{2}\|\nabla\delta c(t)\|^2_{L^2(\RR^d)} \\
\le& \int_0^t\|\delta n\|^{2}_{L^{2}(\RR^d)}\left(C\chi^{2}\|\nabla c\|^{2}_{L^\infty(\RR^d)}+\lambda+\mu\left(\|n\|_{L^\infty(\RR^d)}+\|\widetilde{n}\|_{L^\infty(\RR^d)}\right)+1\right) \dd x
\\
&+C\chi^2\int_0^t\|\widetilde{n}\|^2_{L^\infty(\RR^d)}\|\nabla\delta c\|^2_{L^2(\RR^d)}\dd s.
\end{align*}
Noting the fact that $n,\, \widetilde{n}\in L^\infty_{\rm loc}([0, T_{\max}),\, L^\infty(\RR^d))$ and $c,\, \widetilde{c}\in L^\infty_{\rm loc}([0, T_{\max}),\, W^{1, \infty}(\RR^d))$, by Gr\"{o}nwall inequality we obtain that $(n, c)\equiv (\widetilde{n}, \widetilde{c})$ in $\RR^d\times (0, T_{\max})$.

From  our definition of $T$ it becomes clear through another standard reasoning that $(n, c)$ can be extended up to a maximal $T_{\max}\in (0,\infty]$ fulfilling
\begin{equation}\label{blowup-2}
\|n(\cdot, t)\|_{L^1(\RR^d)}+\|n(\cdot, t)\|_{L^\infty(\RR^d)}+\|c(\cdot, t)\|_{H^1(\RR^d)}+\|c(\cdot, t)\|_{W^{1,\infty}(\RR^d)}\to \infty \,\,\,\,\text{as}\,\, t\nearrow T_{\max}.
\end{equation}
Now we adopt proof by contradiction to verify conclusion \eqref{blowup-2}. Suppose \eqref{blowup-2} does not hold, there exists a constant $R>0$ such that
\[\|n(\cdot, t)\|_{L^1(\RR^d)}+\|n(\cdot, t)\|_{L^\infty(\RR^d)}+\|c(\cdot, t)\|_{H^1(\RR^d)}+\|c(\cdot, t)\|_{W^{1,q}(\RR^d)}\le R.\]
According to selection method of $\widetilde{T}$, we choose
\begin{align*}
\varepsilon=\min\bigg\{1,\, \frac{1}{4\lambda}, \, &\left(8R(C_1\chi+C_2\chi+\mu)\right)^{2},\,\left(\frac{\tau}{4(1+C_1\sqrt{\tau})}\right)^2, \\ &\qquad\qquad\qquad\qquad\left(16RC_4\chi+4\lambda+16M\mu\right)^{2}, \, \frac{\tau^2}{4C_5^2(1+\sqrt{\tau})^2}\bigg\}.
\end{align*}
From the above existence proof, we conclude that there exists a unique solution $(\bar{n}, \bar{c})$ with initial data $\big(n(T_{\max}-{\varepsilon}/{2}), c(T_{\max}-{\varepsilon}/{2})\big)$ on $[0, \varepsilon]$, moreover, we have by uniqueness that
\[\bar{n}(t)=n\left(t+T_{\max}-{\varepsilon}/{2}\right)\quad\text{and}\quad \bar{c}(t)=c\left(t +T_{\max}-{\varepsilon}/{2}\right)\] on $[0, {\varepsilon}/{2})$, which implies that $(\bar{n}, \bar{c})$ extends solution $(n, c)$ beyond $T_{\max}$.  This contradicts the definition of $T_{\max}$. Hence \eqref{blowup-2} is valid. \\
\noindent\textbf{Step 4: Continuation}. Our task is now to show \eqref{blowup1}. Thanks to \eqref{blowup-2}, it suffice to prove that two quantities $\|n(t)\|_{L^1(\RR^d)}$ and $\|c(t)\|_{H^1(\RR^d)}$ cannot blow up at finite time.
\begin{proposition}\label{nL1cL2}Let $(n, c)$ be the solution of system \eqref{che}-\eqref{che-I}. Then, for all $t\in (0, T_{\max})$, the couple $(n,c)$ satisfies the following estimates independent of $t$:
\begin{equation}\label{nL1}
\|n(t)\|_{L^1(\RR^d)}+\int_0^t\|n(s)\|^2_{L^2(\RR^d)}\dd s\le e^{\lambda t}\|n_0\|_{L^1(\RR^d)},
\end{equation}
\begin{equation}\label{cL2}
\begin{split}
 \tau\|c(t)\|^2_{L^2(\RR^d)}+\int_0^t\|c(s)\|^2_{H^1(\RR^d)}\dd s 
 \le  \tau\|c_0\|^2_{L^2(\RR^d)}+e^{\lambda t}\|n_0\|_{L^1(\RR^d)}
 \end{split}
\end{equation}
and
\begin{equation}\label{ncL2}
\begin{split}
 \tau\|\nabla c(t)\|^2_{L^2(\RR^d)} +\int_0^t \|\nabla c(s)\|^2_{H^1(\RR^d)}\dd s \le \tau\|\nabla c_0\|^2_{L^2(\RR^d)}+e^{\lambda t}\|n_0\|_{L^1(\RR^d)}.
\end{split}
\end{equation}
\end{proposition}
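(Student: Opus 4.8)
The plan is to prove the three bounds in a strict order, since each feeds the next: first the $L^1$ mass estimate \eqref{nL1} for $n$, whose space--time $L^2$ part is produced for free by the logistic absorption; then the two estimates \eqref{cL2} and \eqref{ncL2} for $c$, obtained by testing the second equation of \eqref{che} against $c$ and against $-\Delta c$ respectively, where the coupling term is dominated by Young's inequality and the surviving $\int_0^t\|n\|_{L^2(\RR^d)}^2\dd s$ is supplied by \eqref{nL1}. All the integrations by parts below are understood to be carried out first on the spatially localized quantities built from the cut-off $\phi$ of \eqref{eq-1-cutoff} and then passed to the limit $R\to\infty$, exactly as in Steps 2 and 3; I suppress this routine localization in the sketch and write the formal identities directly.

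For \eqref{nL1}, integrate $\eqref{che}_1$ over $\RR^d$. The diffusion term integrates to zero, and the chemotaxis term $\chi\int_{\RR^d}\nabla\cdot(n\nabla c)\dd x$ vanishes because $n\nabla c\in L^1(\RR^d)$ (as $n\in L^1\cap L^\infty$ and $\nabla c\in L^\infty$) decays at infinity; using $n\ge0$ this leaves the mass identity
\begin{equation*}
\frac{\dd}{\dd t}\|n(t)\|_{L^1(\RR^d)}+\mu\|n(t)\|_{L^2(\RR^d)}^2=\la\|n(t)\|_{L^1(\RR^d)}.
\end{equation*}
Integrating over $(0,t)$ gives
\begin{equation*}
\|n(t)\|_{L^1(\RR^d)}+\mu\int_0^t\|n(s)\|_{L^2(\RR^d)}^2\dd s=\|n_0\|_{L^1(\RR^d)}+\la\int_0^t\|n(s)\|_{L^1(\RR^d)}\dd s.
\end{equation*}
A preliminary Gr\"onwall step on the identity yields $\|n(t)\|_{L^1(\RR^d)}\le e^{\la t}\|n_0\|_{L^1(\RR^d)}$; inserting this into the last integral and simplifying gives $\|n(t)\|_{L^1(\RR^d)}+\mu\int_0^t\|n\|_{L^2(\RR^d)}^2\dd s\le e^{\la t}\|n_0\|_{L^1(\RR^d)}$, which is \eqref{nL1} (with constant $1$ once $\mu\ge1$, as in the large-$\mu$ regime of Theorem \ref{result}).

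The estimates \eqref{cL2} and \eqref{ncL2} share one mechanism. Testing $\eqref{che}_2$ against $c$ gives
\begin{equation*}
\frac{\tau}{2}\frac{\dd}{\dd t}\|c(t)\|_{L^2(\RR^d)}^2+\|\nabla c(t)\|_{L^2(\RR^d)}^2+\|c(t)\|_{L^2(\RR^d)}^2=\int_{\RR^d}nc\dd x,
\end{equation*}
and Young's inequality $\int_{\RR^d}nc\dd x\le\frac12\|n\|_{L^2(\RR^d)}^2+\frac12\|c\|_{L^2(\RR^d)}^2$ absorbs half the $\|c\|_{L^2(\RR^d)}^2$ term, leaving $\frac{\tau}{2}\frac{\dd}{\dd t}\|c\|_{L^2(\RR^d)}^2+\frac12\|c\|_{H^1(\RR^d)}^2\le\frac12\|n\|_{L^2(\RR^d)}^2$; integrating and using \eqref{nL1} to control $\int_0^t\|n\|_{L^2(\RR^d)}^2\dd s$ yields \eqref{cL2}. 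For \eqref{ncL2} I instead test $\eqref{che}_2$ against $-\Delta c$, obtaining
\begin{equation*}
\frac{\tau}{2}\frac{\dd}{\dd t}\|\nabla c(t)\|_{L^2(\RR^d)}^2+\|\Delta c(t)\|_{L^2(\RR^d)}^2+\|\nabla c(t)\|_{L^2(\RR^d)}^2=-\int_{\RR^d}n\Delta c\dd x;
\end{equation*}
estimating the right-hand side by $\frac12\|\Delta c\|_{L^2(\RR^d)}^2+\frac12\|n\|_{L^2(\RR^d)}^2$, using the whole-space identity $\|\Delta c\|_{L^2(\RR^d)}=\|D^2c\|_{L^2(\RR^d)}$ so that the surviving second-order terms dominate $\frac12\|\nabla c\|_{H^1(\RR^d)}^2$, and invoking \eqref{nL1} once more gives \eqref{ncL2}.

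I expect the only genuine difficulty to be the rigorous justification of these formal manipulations on the unbounded domain, not the algebra. Concretely, one must run each energy identity on the $\phi$- or $\phi^2$-weighted quantities and show that the commutator errors generated by $\nabla\phi$ and $\Delta\phi$, which are supported in the annulus $B_{2R}\setminus B_R$ and carry factors of order $1/R$ and $1/R^2$, tend to zero as $R\to\infty$. This is precisely where the qualitative regularity $(n,c)\in C^0([0,T_{\max});(L^1\cap L^\infty(\RR^d))\times(H^1\cap W^{1,\infty}(\RR^d)))$ from Theorem \ref{thm-local} enters, exactly as in the nonnegativity and uniqueness arguments, guaranteeing that the vanishing boundary contributions and the divergence term $\int_{\RR^d}\nabla\cdot(n\nabla c)\dd x$ are legitimately zero in the limit.
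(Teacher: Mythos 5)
Your proposal is correct and follows essentially the same route as the paper: test $\eqref{che}_1$ by (a cut-off of) $1$ to get the mass/logistic identity and Gr\"onwall, then test $\eqref{che}_2$ by $c$ and by $-\Delta c$ (the paper works with $\phi^2 c$ and the gradient energy, which is the same computation), absorbing the coupling via Young and feeding in \eqref{nL1}. The paper likewise carries the cut-off $\phi$ explicitly and discards $O(1/R)$ errors, and its own derivation also produces the factor $\mu$ in front of $\int_0^t\|n\|_{L^2}^2\dd s$ that you flag, so your remark about needing $\mu\ge 1$ for the inequality exactly as stated applies equally to the original.
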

\begin{proof}[Proof of Proposition \ref{nL1cL2}]
Owning to $n\ge 0$,  multiplying E.q.$\eqref{che}_1$ by $\phi$  and integrating with respect to spatial variable yields that
\begin{align*}
&\frac{\dd}{\dd t}\|\phi n(t)\|_{L^1(\RR^d)}+\mu\big\|\sqrt{\phi}n(t)\big\|^2_{L^2(\RR^d)}\\ \leq&\lambda\|\phi n(t)\|_{L^1(\RR^d)}+\frac{C}{R^2}\|n\|_{L^1(\RR^d)}+\frac{C}{R}\|\nabla c\|_{L^\infty(\RR^d)}\|n\|_{L^1(\RR^d)}.
\end{align*}
Performing Gr\"{o}nwall's inequality and taking $R$ goes to infinite, the above equality implies estimate \eqref{nL1}.

Multiplying E.q.$\eqref{che}_2$ by $\phi^2 c$ and integrating in terms of spatial variable, we obtain that
\begin{align*}
\frac{\tau}{2}\frac{\dd}{\dd t}\|\phi c(t)\|^2_{L^2(\RR^d)}+\frac{1}{2}\|\phi \nabla c(t)\|^2_{L^2(\RR^d)}+\frac{1}{2}\|\phi c(t)\|^2_{L^2(\RR^d)}\le \frac{1}{2}\|\phi n\|^2_{L^2(\RR^d)}+\frac{C}{R^2}\|c\|^2_{L^2(\RR^d)}.
\end{align*}
Integrating the above inequality with respect to time variable from $0$ to $t$ and taking $R$  to infinite lead to
\begin{align*}
 &\tau \| c(t)\|^2_{L^2(\RR^d)}+ \int_0^t\left(\| \nabla c(s)\|^2_{L^2(\RR^d)}+ \| c(s)\|^2_{L^2(\RR^d)}\right)\dd s \le \tau \|  c_0\|^2_{L^2(\RR^d)}+\int_0^t\| n(s)\|^2_{L^2(\RR^d)}\dd s,
\end{align*}
combining with estimate~\eqref{nL1} yields \eqref{cL2}.

Similarly, we can show 
\begin{align*}
&\frac{\tau}{2}\frac{\dd}{\dd t}\|\phi\nabla c(t)\|^2_{L^2(\RR^d)}+\|\phi\nabla^2 c(t)\|^2_{L^2(\RR^d)}+\|\phi\nabla c(t)\|^2_{L^2(\RR^d)}\\=&-\int_{\RR^d}  n\Delta c\dd x-2\int_{\RR^d}\phi \nabla c\cdot\big(\nabla\phi\cdot\nabla\nabla c\big)\dd x-2\int_{\RR^d}\phi n\nabla \phi\cdot\nabla c\dd x\\
\le& 2\|\phi n\|^2_{L^2(\RR^d)}+\frac{1}{2}\|\phi\nabla^2 c\|^2_{L^2(\RR^d)}+\frac{C}{R^2}\|\nabla c\|^2_{L^2(\RR^d)}.
\end{align*}
Thanks to inequality \eqref{nL1}, it follows from the above inequality that \eqref{n1c2}.
\end{proof}
%%%%%%%%%%%%%%%%%%%%%%%%%%%%%%%%%%%%%%%%%%%%%%%%%%%%%%%%%%%%%%%%%%%%%%%%%%%%%%%%%%%%%%%%%%%%%%%%%%%%%%%
\section{Proof of main result }
%%%%%%%%%%%%%%%%%%%%%%%%%%%%%%%%%%%%%%%%%%%%%%%%%%%%%%%%%%%%%%%%%%%%%%%%%%%%%%%%%%%%%%%%%%%%%%%%%%%%%%%%
This section is devoted to show Theorem \ref{result}. We will adopt the following approximate schemes for Cauchy problem \eqref{che}-\eqref{che-I} to do this:
\begin{equation}\label{che-M}
\left. \aligned
 \partial_tn_M-\Delta n_M=&-\chi\nabla\cdot(n_M\nabla c_M)+\la n_M-\mu n_M^2 \\
 \tau\partial_tc_M-\Delta c_M=&-c_M+n_M
\endaligned
\right\}\quad\text{in}\,\,\,\,\RR^d\times\RR^+,
\end{equation}
\begin{equation}\label{che-I-M}
n_M(0, x)=\psi(x/M)n_0(x),\quad c_M(0, x)=\psi(x/M)c_0(x)\quad\text{in}\,\,\,\,\RR^d.
\end{equation}
Here $\psi$ is a smooth function satisfying
\[\psi(x)=\begin{cases}
1,\quad&x\in B_1(0)\\
0,&x\in \RR^d\backslash B_2(0)
\end{cases}.\]
Since $n_0\in L^\infty(\RR^d)$ and $c_0\in W^{1,\infty}(\RR^d)$,  it is easy to check that the initial data in \eqref{che-I-M} satisfy
\[n_M(0, x)\in L^1(\RR^d)\cap L^\infty(\RR^d)\]
and
\[c_M(0, x)\in H^1(\RR^d)\cap W^{1,\infty}(\RR^d).\]
Moreover, we have from Theorem \ref{thm-local} that problem \eqref{che-M}-\eqref{che-I-M} admits a unique nonnegative solution $(n_M,c_M)\in\big(C^{2,1}(\RR^d\times (0, T_{\max}))\big)^2$ such that
\begin{align*}
&n^M\in C^0\left( [0, T_{\max}); L^1(\RR^d)\cap L^\infty(\RR^d))\cap C^{2,1}(\RR^d\times (0, T_{\max})\right),\\
&c^M\in C^0\left([0, T_{\max});  H^1(\RR^d)\cap W^{1,\infty}(\RR^d))\cap C^{2,1}(\RR^d\times (0, T_{\max})\right),
\end{align*}
 if $T_{\max}<\infty$, then
\begin{equation}\label{blowup}
 \lim_{T\to T_{ \textnormal{max}}-}\left(\left\|n^M(\cdot, t)\right\|_{L^\infty(\RR^d)} +\left\|c^M(\cdot, t)\right\|_{W^{1,\infty}(\RR^d)}\right)=\infty.
\end{equation}
This implies the solutions to the regularized problem \eqref{che-M}-\eqref{che-I-M} are smooth and
decay sufficiently fast at infinity, so there are no boundary terms when we integrate
by parts in our calculations below. In addition, we define uniformly local space $L^{p,R}_{\uloc}(\RR^d)$ consists of the local integral function $f$ satisfying
\[\|f\|_{p,R}<+\infty,\]
 where $R$ is undermined now and fixed later. Let us point out that $L^{p,R}_{\uloc}(\RR^d)$ coincides with  space $L^{p }_{\uloc}(\RR^d)$ defined in Notation for arbitrary $R>0$. Indeed, it is obvious that for $R\geq1,$
 \[\|f\|_{p,1}\leq \|f\|_{p,R}.\]
 On the other hand, we have by the covering theorem that for $R\geq1,$
 \[ \|f\|_{p,R}\leq \left(CR^d\right)^{\frac{1}{p}}\|f\|_{p,1}.\]
 To simplify the notation, we agree that $(n,c):=(n_M,c_M)$ and $L^{p}_{\uloc}(\RR^d):=L^{p,R}_{\uloc}(\RR^d)$ in the following part of this section.
 Next we are going to establish  some useful \emph{a priori estimates} for $(n_M,c_M)$ by introducing the uniformly local space $L^{p,R}_{\uloc}(\RR^d)$.
  \begin{proposition}\label{n1c2}
Assume $\tau, \mu, \chi>0$ and $\lambda\ge 0$. Let the couple $(n, c)$ be the solution of system \eqref{che-M}-\eqref{che-I-M}, if $\mu>\frac{d\chi}{4}$, then there exists a $R_0>1$ such that for all $R\ge R_0$ and $T\in(0, T_{\textnormal{max}})$,
\begin{align*}
&\|n\|_{L^\infty_TL^1_{\uloc}(\RR^d)}+\frac{\chi\tau}{4}\|\nabla c\|^2_{L^\infty_TL^2_{\uloc}(\RR^d)}\\
\le &4\|n_0\|_{L^1_{\uloc}(\RR^d)}+2{\chi\tau}\|\nabla c_0\|^2_{L^2_{\uloc}(\RR^d)}+C(\lambda, \tau, \mu, \chi, d, R).
\end{align*}
\end{proposition}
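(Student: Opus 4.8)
The plan is to derive a single localized, coupled energy inequality that mimics the pointwise quantity $z=\frac1\chi n+\frac\tau2|\nabla c|^2$ behind \eqref{z}, but which closes for \emph{every} $\tau>0$ once it is integrated against a cut-off and fed into the logistic damping. Fix a center $x_0\in\RR^d$ and set $\phi=\phi_{x_0}=\zeta^4$, where $\zeta$ is a smooth cut-off that equals $1$ on $B_R(x_0)$, is supported in $B_{2R}(x_0)$, and satisfies $|\nabla\zeta|\le C/R$, $|\Delta\zeta|\le C/R^2$. The fourth power is chosen so that $|\nabla\phi|^2/\phi\le C/R^2$ and $|\Delta\phi|/\sqrt{\phi}\le C/R^2$ hold on the whole support, which is exactly what turns the cut-off errors into absorbable quantities. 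Define $E_{x_0}(t):=\frac1\chi\int\phi n\,\dd x+\frac\tau2\int\phi|\nabla c|^2\,\dd x$.

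First I would test $\eqref{che}_1$ against $\phi$ and the spatial gradient of $\eqref{che}_2$ against $\phi\nabla c$. After integrating by parts the $n$-equation contributes $\frac1\chi\int\Delta\phi\,n+\int n\nabla\phi\cdot\nabla c+\frac\lambda\chi\int\phi n-\frac\mu\chi\int\phi n^2$, while the $\nabla c$-equation, using $\int\phi\nabla c\cdot\Delta\nabla c=-\int\phi|D^2c|^2+\frac12\int\Delta\phi|\nabla c|^2$ and $\int\phi\nabla c\cdot\nabla n=-\int\phi n\Delta c-\int n\nabla\phi\cdot\nabla c$, contributes $-\int\phi|D^2c|^2+\frac12\int\Delta\phi|\nabla c|^2-\int\phi|\nabla c|^2-\int\phi n\Delta c-\int n\nabla\phi\cdot\nabla c$. \textbf{The crucial structural point} is that in the combination $\frac1\chi(\text{first})+(\text{second})$ the two first-order cut-off terms $\pm\int n\nabla\phi\cdot\nabla c$ cancel \emph{exactly}, so the dangerous coupling between $n$ and $\nabla c$ disappears and only the harmless Laplacian-cut-off errors survive; this is precisely what makes the scheme $\tau$-independent. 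The leftover term $-\int\phi n\Delta c$ is then handled as in \eqref{z}: from $|\Delta c|^2\le d|D^2c|^2$ and Young's inequality, $-\int\phi n\Delta c-\int\phi|D^2c|^2\le\frac d4\int\phi n^2$. This produces
\[
\frac{\dd}{\dd t}E_{x_0}\le\frac{\lambda}{\chi}\int\phi n\,\dd x-\Big(\frac{\mu}{\chi}-\frac{d}{4}\Big)\int\phi n^2\,\dd x-\int\phi|\nabla c|^2\,\dd x+\frac{1}{\chi}\int\Delta\phi\,n\,\dd x+\frac{1}{2}\int\Delta\phi\,|\nabla c|^2\,\dd x .
\]

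Next I would turn this into a logistic/linear damping inequality for $E_{x_0}$. Write $P=\int\phi n$, $D=\int\phi n^2$, $Q=\int\phi|\nabla c|^2$. The hypothesis $\mu>\frac{d\chi}4$ makes $-(\frac\mu\chi-\frac d4)D$ strictly dissipative, and into it I absorb both the linear term $\frac\lambda\chi P$ (via $\lambda\phi n\le\frac{\delta\chi}2\phi n^2+\frac{\lambda^2}{2\delta\chi}\phi$ and $\int\phi\le CR^d$) and the $n$-cut-off error $\frac1\chi\int\Delta\phi\,n\le\frac{C}{R^2}\int\sqrt{\phi}\,n\le\frac\delta2 D+CR^{d-4}$ (using $|\Delta\phi|/\sqrt\phi\le C/R^2$, Cauchy--Schwarz on a support of measure $\le CR^d$, and Young); what remains is a constant $C_1=C_1(\lambda,\tau,\mu,\chi,d,R)$. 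Since $D\ge P^2/(CR^d)$ gives genuine logistic control of the $P$-part while $-Q$ linearly damps the $Q$-part, and since at least one of $\frac1\chi P,\frac\tau2 Q$ is $\ge E_{x_0}/2$, one obtains $-(\frac\mu\chi-\frac d4-\delta)D-Q\le-\min(c_1E_{x_0}^2,c_2E_{x_0})$ for suitable $c_1,c_2>0$.

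The main obstacle is the last error $\frac12\int\Delta\phi|\nabla c|^2\le\frac{C}{R^2}\int_{B_{2R}(x_0)}|\nabla c|^2$: it lives on the annulus and cannot be absorbed into the \emph{same-center} dissipation $-Q$. I would resolve this by passing to the supremum over centers. A covering argument gives $\int_{B_{2R}(x_0)}|\nabla c|^2\le N_d\|\nabla c\|_{2,R}^2\le\frac{2N_d}{\tau}\mathcal E(t)$, where $\mathcal E(t):=\sup_{x_0}E_{x_0}(t)$ and $N_d$ is a dimensional covering number independent of $R$; thus for every $x_0$, $\frac{\dd}{\dd t}E_{x_0}\le-\min(c_1E_{x_0}^2,c_2E_{x_0})+\frac{2CN_d}{\tau R^2}\mathcal E(t)+C_1$. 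Fixing $R=R_0$ large enough that $\frac{2CN_d}{\tau R^2}\le\frac{c_2}{2}$, a Danskin/maximizer argument for the locally Lipschitz $\mathcal E$ shows that whenever $\mathcal E(t)\ge\max\{c_2/c_1,\,2C_1/c_2\}$ one has $\frac{\dd}{\dd t}\mathcal E\le-\frac{c_2}{2}\mathcal E+C_1\le0$; hence $\mathcal E(t)\le\max\{\mathcal E(0),\,c_2/c_1,\,2C_1/c_2\}$ for all $t<T_{\max}$, a bound \emph{independent of $T$}. Translating back through $\chi E_{x_0}\ge\int_{B_R(x_0)}n+\frac{\chi\tau}2\int_{B_R(x_0)}|\nabla c|^2$ and estimating $\mathcal E(0)$ by the covering bound on the initial data yields the claimed inequality. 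The two genuinely delicate ingredients are the exact cancellation of the $\nabla\phi$-coupling and the supremum-over-centers step that absorbs the annular $|\nabla c|^2$ error; the high-order cut-off $\phi=\zeta^4$ and the choice of $R_0$ large are the devices that make both go through.
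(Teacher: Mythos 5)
Your proposal is correct and follows essentially the same route as the paper: the same localized coupled quantity $\int\phi\big(n+\tfrac{\chi\tau}{2}|\nabla c|^2\big)$, the same exact cancellation of the $\int n\,\nabla\phi\cdot\nabla c$ coupling (which in the paper appears as the pointwise identity $-\chi\nabla\cdot(n\nabla c)+\chi\nabla c\cdot\nabla n=-\chi n\Delta c$ \emph{before} multiplying by the cut-off), the same use of $|\Delta c|^2\le d|D^2c|^2$ together with $\mu>\tfrac{d\chi}{4}$, and the same covering-plus-large-$R$ absorption of the annular $\int\Delta\phi\,|\nabla c|^2$ error. The only real divergence is the endgame: you retain the quadratic damping and close with a Dini-derivative (Danskin) argument for $\sup_{x_0}E_{x_0}$ (legitimate, since the family is equi-Lipschitz on $[0,T]$ for the regularized solutions), whereas the paper linearizes the damping via $n^2\ge\varepsilon n-\tfrac{\varepsilon^2}{4}$ with $\varepsilon$ chosen to annihilate the linear $\int\phi n$ terms, applies the scalar ODE lemma center-by-center with the error terms frozen at their $L^\infty_T$ values, and only then absorbs by smallness of $C/R^2$ --- both closures are valid and yield the stated $T$-independent bound (up to harmless constants).
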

\begin{proof}
We introduce a function $\phi^R_{x_0}\in C^\infty_c(\RR^d)$ by
\begin{equation}
\phi^{R}_{x_0}(x)=\left\{ \aligned&\exp\left(\frac{4}{3}+\frac{4R^2}{|x-x_0|^2-4R^2}\right),&|x-x_0|< 2R,\\
&0,&|x-x_0|\ge 2R.
\endaligned
\right.
\end{equation}
From the definition of $\phi^R_{x_0}$, it is easy to check that
\begin{itemize}
  \item [(i)] $\displaystyle1\le\phi^R_{x_0}(x)< 2$, \quad $\forall\,x\in B_{R}(x_0)$.\smallskip
  \item [(ii)]$\displaystyle \phi^R_{x_0}\big|_{\partial B_{2R}(x_0)}=\frac{\partial\phi^R_{x_0}}{\partial \nu}\bigg|_{\partial B_{2R}(x_0)}=0$.\smallskip
   \item [(iii)] $\displaystyle |\nabla \phi^R_{x_0}|\le \frac{C}{R}$, \,$\displaystyle|D^2 \phi^R_{x_0}|\le \frac{C}{R^2}$, where $C$ is an absolute constant independent of $R$.
\end{itemize}
To begin with, using $$\Delta\nabla c\cdot\nabla c=\frac{1}{2}\Delta|\nabla c|^2-|D^2 c|^2, $$ we can infer from system \eqref{che} that
\begin{align*}
\frac{\dd}{\dd t}\left(n+\frac{\chi\tau}{2}|\nabla c|^2\right)-\Delta\left(n+\frac{\chi}{2}|\nabla c|^2\right)+\chi\left(|D^2 c|^2+|\nabla c|^2\right)=-\chi n\Delta c+\lambda n-\mu n^2.
\end{align*}
Multiplying the above equality by $\phi^R_{x_0}$ and integrating in terms of spatial variable, it yields that
\begin{align*}
&\frac{\dd}{\dd t} \int_{\RR^d}\left(n+\frac{\chi\tau}{2}|\nabla c|^2\right)\phi^R_{x_0}\dd x-\int_{\RR^d}\Delta(n+\frac{\chi}{2}|\nabla c|^2)\phi^R_{x_0}\dd x+\chi\int_{\RR^d}(|D^2 c|^2+|\nabla c|^2)\phi^R_{x_0} \dd x\\
=&-\chi\int_{\RR^d} n\Delta c \phi^R_{x_0}\dd x+\lambda\int_{\RR^d} n\phi^R_{x_0}\dd x-\mu \int_{\RR^d}n^2\phi^R_{x_0}\dd x.
\end{align*}
Performing integration by parts, noting the definition of $\phi^R_{x_0}$, we obtain that
\begin{align*}
\int_{\RR^d}\Delta\left(n+\frac{\chi}{2}|\nabla c|^2\right)\phi^R_{x_0}\dd x&=\int_{B_{2R}(x_0)}\left(n+\frac{\chi}{2}|\nabla c|^2\right){\Delta\phi^R_{x_0}}\dd x.
\end{align*}
Because there exists a family of balls $\displaystyle\{B_{R}(y_i)\}_{i=1}^{3^d }$ such that
\[B_{2R}(x_0)\subseteq \bigcup_{i=1}^{3^d}B_{R}(y_i),\]
we deduce that
\begin{align*}
\int_{B_{2R}(x_0)}\left(n+\frac{\chi}{2}|\nabla c|^2\right){\Delta \phi^R_{x_0}}\dd x&\le\frac{C}{R^2}\int_{B_{2R}(x_0)}\left(n+\frac{\chi}{2}|\nabla c|^2\right)\dd x\\
&\le \frac{C}{R^2}\sum_{i=1}^{3^d}\int_{B_{R}(y_i)}\left(n+\frac{\chi}{2}|\nabla c|^2\right)\dd x\\
&\le \frac{C3^d}{R^2}\left(\|n\|_{L^1_{\uloc}(\RR^d)}+\frac{\chi}{2}\|\nabla c\|^2_{L^2_{\uloc}(\RR^d)}\right).
\end{align*}
  Owning to $|\Delta c|^2\le d|D^2 c|^2$, we have by  H\"{o}lder's and Young's inequalities that
\begin{align*}
-\chi\int_{\RR^d} n\Delta c \phi^R_{x_0}\dd x\le& \frac{\chi}{d}\int_{\RR^d}|\Delta c|^2\phi^R_{x_0} \dd x+\frac{d\chi}{4}\int_{\RR^d}n^2\phi^R_{x_0}\dd x\\
\le &{\chi}\int_{\RR^d}|D^2 c|^2\phi^R_{x_0} \dd x+\frac{d\chi}{4}\int_{\RR^d}n^2\phi^R_{x_0}\dd x.
\end{align*}
Therefore, the above estimates imply that
\begin{align*}
&\frac{\dd}{\dd t} \int_{\RR^d}\left(n+\frac{\chi\tau}{2}|\nabla c|^2\right)\phi^R_{x_0} \dd x+\frac{2}{\tau}\int_{\RR^d}\left(n+\frac{\chi\tau}{2}|\nabla c|^2\right)\phi^R_{x_0} \dd x\\
\le& \frac{C3^d}{R^2}\left(\|n\|_{L^1_{\uloc}(\RR^d)}+\frac{\chi}{2}\|\nabla c\|^2_{L^2_{\uloc}(\RR^d)}\right)+\left(\lambda+\frac{2}{\tau}\right)
\int_{\RR^d}n\phi^R_{x_0}+\left(\frac{d\chi}{4}-\mu\right)\int_{\RR^d}n^2\phi^R_{x_0}\dd x.
\end{align*}
Since $n^2\ge \e n-\frac{\e^2}{4}$ for any $\e\ge 0$ and for $\mu>\frac{d\chi}{4}$, we have that
\begin{align*}
&\frac{\dd}{\dd t} \int_{\RR^d}\left(n+\frac{\chi\tau}{2}|\nabla c|^2\right)\phi^R_{x_0} \dd x+\frac{2}{\tau}\int_{\RR^d}\left(n+\frac{\chi\tau}{2}|\nabla c|^2\right)\phi^R_{x_0} \dd x\\
\le& \frac{C3^d}{R^2}\left(\|n\|_{L^1_{\uloc}(\RR^d)}+\frac{\chi}{2}\|\nabla c\|^2_{L^2_{\uloc}(\RR^d)}\right)+\left(\lambda+\frac{2}{\tau}-\e\left(\mu-\frac{d\chi}{4}\right)\right)\int_{\RR^d}n\phi^R_{x_0}\dd x\\&+\left(\mu-\frac{d\chi}{4}\right)\frac{\e^2}{4}\int_{\RR^d}\phi^R_{x_0}\dd x\\
\le& \frac{C3^d}{R^2}(\|n\|_{L^1_{\uloc}(\RR^d)}+\frac{\chi}{2}\|\nabla c\|^2_{L^2_{\uloc}(\RR^d)})+\left(\lambda+\frac{2}{\tau}-
\e\left(\mu-\frac{d\chi}{4}\right)\right)\|n\|_{L^1_{\uloc}(\RR^d)}\\&+C\left(\mu-\frac{d\chi}{4}\right)\frac{\e^2}{4}R^d.
\end{align*}
For any $t\in [0, T]$, the above estimate shows that
\begin{align*}
&\frac{\dd}{\dd t} \int_{\RR^d}\left(n+\frac{\chi\tau}{2}|\nabla c|^2\right)\phi^R_{x_0} \dd x+\frac{2}{\tau}\int_{\RR^d}\left(n+\frac{\chi\tau}{2}|\nabla c|^2\right)\phi^R_{x_0} \dd x\\
\le& \frac{C3^d}{R^2}\left(\|n\|_{L^{\infty}_TL^1_{\uloc}(\RR^d)}+\frac{\chi}{2}\|\nabla c\|^2_{L^{\infty}_TL^2_{\uloc}(\RR^d)}\right)+\left(\lambda+\frac{2}{\tau}
-\e\left(\mu-\frac{d\chi}{4}\right)\right)\|n\|_{L^\infty_TL^1_{\uloc}(\RR^d)}\\&
+C\left(\mu-\frac{d\chi}{4}\right)\frac{\e^2}{4}R^d.
\end{align*}
Using the fact that $y'(t)+cy(t)\le C$ in $(0, T]$ implies
$$y\le \max\left\{y(0),\, \frac{C}{c}\right\}\,\, \text{ for all }t\in (0, T],$$ from the above inequality we obtain that, for any $t\in (0, T]$,
\begin{align*}
&\int_{\RR^d} n(t)\phi^R_{x_0} \dd x +\frac{\chi\tau}{2}\int_{\RR^d}|\nabla c(t)|^2\phi^R_{x_0} \dd x\\
\le &\max\Bigg\{\int_{\RR^d}n_0\phi^R_{x_0} \dd x +\frac{\chi\tau}{2}\int_{\RR^d}|\nabla c_0|^2\phi^R_{x_0} \dd x,\,\,\frac{C3^d\tau}{2R^2}\left(\|n\|_{L^{\infty}_TL^1_{\uloc}(\RR^d)}+\frac{\chi}{2}\|\nabla c\|^2_{L^{\infty}_TL^2_{\uloc}(\RR^d)}\right)\\
&\qquad\quad+\frac{\tau}{2}\left(\lambda+\frac{2}{\tau}-\e\left(\mu-\frac{d\chi}{4}\right)\right)
\|n\|_{L^\infty_TL^1_{\uloc}(\RR^d)}+\frac{C\tau}{2}\left(\mu-\frac{d\chi}{4}\right)\frac{\e^2}{4}R^d \Bigg\}.
\end{align*}
Noting the definition of $L^p_{\uloc}(\RR^d)$ and the support property of $\phi^R_{x_0}$, we infer from the above inequality that for any $0<t\le T$,
\begin{align*}
&\|n(t)\|_{L^1_{\uloc}(\RR^d)}+\frac{\chi\tau}{2}\|\nabla c(t)\|^2_{L^2_{\uloc}(\RR^d)}\\
\le& \max\Bigg\{2\|n_0\|_{L^1_{\uloc}(\RR^d)}+{\chi\tau}\|\nabla c_0\|^2_{L^2_{\uloc}(\RR^d)},\,\,
 \frac{C3^d\tau}{R^2}\left(\|n\|_{L^{\infty}_TL^1_{\uloc}(\RR^d)}+\frac{\chi}{2}\|\nabla c\|^2_{L^{\infty}_TL^2_{\uloc}(\RR^d)}\right)\\&\qquad\quad+{\tau}\left(\lambda+\frac{2}{\tau}
 -\e\left(\mu-\frac{d\chi}{4}\right)\right)\|n\|_{L^\infty_TL^1_{\uloc}(\RR^d)}+{C\tau}
 \left(\mu-\frac{d\chi}{4}\right)\frac{\e^2}{4}R^d \Bigg\}.
\end{align*}
Taking sup with respect to time variable $t$,  we get that
\begin{align*}
&\|n\|_{L^\infty_TL^1_{\uloc}(\RR^d)}+\frac{\chi\tau}{2}\|\nabla c\|^2_{L^\infty_TL^2_{\uloc}(\RR^d)}\\
\le& 2\max\Bigg\{2\|n_0\|_{L^1_{\uloc}(\RR^d)}+{\chi\tau}\|\nabla c_0\|^2_{L^2_{\uloc}(\RR^d)},\,\,
 \frac{C3^d\tau}{R^2}\left(\|n\|_{L^{\infty}_TL^1_{\uloc}(\RR^d)}+\frac{\chi}{2}\|\nabla c\|^2_{L^{\infty}_TL^2_{\uloc}(\RR^d)}\right)\\&\qquad\quad\,\,+{\tau}\left(\lambda+
 \frac{2}{\tau}-\e\left(\mu-\frac{d\chi}{4}\right)\right)\|n\|_{L^\infty_TL^1_{\uloc}(\RR^d)}+{C\tau}\left(\mu-\frac{d\chi}{4}\right)\frac{\e^2}{4}R^d \Bigg\}\\
=&\max\Bigg\{4\|n_0\|_{L^1_{\uloc}(\RR^d)}+2{\chi\tau}\|\nabla c_0\|^2_{L^2_{\uloc}(\RR^d)},\,\,
 \frac{C3^d\chi\tau}{R^2}\|\nabla c\|^2_{L^{\infty}_TL^2_{\uloc}(\RR^d)}+{C\tau}\left(\mu-\frac{d\chi}{4}\right)\frac{\e^2}{2}R^d \\&\qquad\qquad\qquad\qquad\qquad\qquad\,\,\,\,+2{\tau}\left(\frac{C3^d}{R^2}+\lambda+\frac{2}{\tau}-
 \e\left(\mu-\frac{d\chi}{4}\right)\right)\|n\|_{L^\infty_TL^1_{\uloc}(\RR^d)}\Bigg\}.
\end{align*}
We choosing some $R_0>1$, such that for all $R\ge R_0$, $$\frac{C3^d}{R^2}\le\frac{1}{4}.$$ As long as $\mu>\frac{d\chi}{4}$, we can choose some $\e$ such that
\[\frac{C3^d}{R^2}+\lambda+\frac{2}{\tau}-\e\left(\mu-\frac{d\chi}{4}\right)=0.\]
Then we infer that
\begin{align*}
&\|n\|_{L^\infty_TL^1_{\uloc}(\RR^d)}+\frac{\chi\tau}{2}\|\nabla c\|^2_{L^\infty_TL^2_{\uloc}(\RR^d)}\\
\le& \max\left\{4\|n_0\|_{L^1_{\uloc}(\RR^d)}+2{\chi\tau}\|\nabla c_0\|^2_{L^2_{\uloc}(\RR^d)},\,\frac{\chi\tau}{4}\|\nabla c\|^2_{L^{\infty}_TL^2_{\uloc}(\RR^d)}+C(\lambda, \tau, \mu, \chi, d, R) \right\}.
\end{align*}
Hence we can conclude that
\begin{align*}
&\|n\|_{L^\infty_TL^1_{\uloc}(\RR^d)}+\frac{\chi\tau}{4}\|\nabla c\|^2_{L^\infty_TL^2_{\uloc}(\RR^d)}\\
\le &4\|n_0\|_{L^1_{\uloc}(\RR^d)}+2{\chi\tau}\|\nabla c_0\|^2_{L^2_{\uloc}(\RR^d)}+C(\lambda, \tau, \mu, \chi, d, R).
\end{align*}
We complete the proof.
\end{proof}
\begin{proposition}\label{lemmank}Let $\lambda, \,\chi>0$, $R\ge 1$ and $\mu\ge0$. Then for any $k\in\NN$ with $k\ge 2$, one can find some absolute constant $C$ and $C_k$ depending on $k,\lambda,\chi$ such that the solution $(n,c)$ of equations \eqref{che-M}-\eqref{che-I-M} satisfies
\begin{equation}\label{nlock}
\begin{aligned}
&\frac{\dd}{\dd t}\int_{\RR^d}n^k\phi^R_{x_0}\dd x+\frac{k(k-1)}{4}\int_{\RR^d}|\nabla n|^2 n^{k-2}\phi^R_{x_0}\dd x\\
\le&\frac{C3^dk}{2(k-1)R^2}\|n\|^k_{L^k_{\textnormal{uloc}}(\RR^d)}+\frac{C3^dk}{R^{2k}}\|\nabla c\|^{2k}_{L^{2k}_{\textnormal{uloc}}(\RR^d)}+k\int_{\RR^d}n^{2}|\nabla c|^{2k-2}\phi^R_{x_0}\dd x\\
&+(C_k-\mu k)\int_{\RR^d} n^{k+1}\phi^R_{x_0}\dd x+C(\lambda+1) R^dk.
\end{aligned}
\end{equation}
\end{proposition}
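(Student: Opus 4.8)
The plan is to test the first equation of \eqref{che-M} against $n^{k-1}\phi^R_{x_0}$ and integrate over $\RR^d$, exactly mimicking the $k=1$ computation of Proposition~\ref{n1c2} but now raising the power. Since $\int_{\RR^d}\partial_t n\, n^{k-1}\phi^R_{x_0}\dd x=\frac1k\frac{\dd}{\dd t}\int_{\RR^d}n^k\phi^R_{x_0}\dd x$ and, after one integration by parts, $-\int_{\RR^d}\Delta n\, n^{k-1}\phi^R_{x_0}\dd x=(k-1)\int_{\RR^d}n^{k-2}|\nabla n|^2\phi^R_{x_0}\dd x+\int_{\RR^d}n^{k-1}\nabla n\cdot\nabla\phi^R_{x_0}\dd x$, multiplying through by $k$ produces the time derivative $\frac{\dd}{\dd t}\int n^k\phi^R_{x_0}$ together with the \emph{full} dissipation $k(k-1)\int n^{k-2}|\nabla n|^2\phi^R_{x_0}$. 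All remaining contributions --- the diffusive cut-off term, the two chemotactic terms and the logistic terms --- must then be bounded so that only a quarter, $\frac{k(k-1)}{4}\int n^{k-2}|\nabla n|^2\phi^R_{x_0}$, of the dissipation survives on the left. The whole game is a careful Young's-inequality \emph{budget} for this single good term, combined with the covering $B_{2R}(x_0)\subseteq\bigcup_{i=1}^{3^d}B_R(y_i)$ already used in Proposition~\ref{n1c2} to turn every integral over $B_{2R}(x_0)$ into the uniformly local norms $\|\cdot\|_{L^p_{\uloc}}$.

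Concretely, I would spend the dissipation as follows. For the diffusive cross term $k\int n^{k-1}\nabla n\cdot\nabla\phi^R_{x_0}$ I write $n^{k-1}\nabla n=(n^{k/2-1}\nabla n)\,n^{k/2}$ and apply Young's inequality using property (iii), $|\nabla\phi^R_{x_0}|\le C/R$ (more precisely $|\nabla\phi^R_{x_0}|^2/\phi^R_{x_0}\le C/R^2$, which is exactly why the exponential profile is chosen), absorbing $\frac{k(k-1)}{2}\int n^{k-2}|\nabla n|^2\phi^R_{x_0}$ and leaving the Young remainder $\frac{C3^dk}{2(k-1)R^2}\|n\|^k_{L^k_{\uloc}}$; the factor $\frac{1}{2(k-1)}$ is precisely the resulting Young constant $\frac{k}{2(k-1)}$. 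For the chemotactic bulk term I integrate $-\chi\nabla\cdot(n\nabla c)$ by parts once to obtain $\chi k(k-1)\int n^{k-1}\nabla n\cdot\nabla c\,\phi^R_{x_0}+\chi k\int n^k\nabla c\cdot\nabla\phi^R_{x_0}$; to the first of these I apply Young's inequality again, this time absorbing the remaining $\frac{k(k-1)}{4}\int n^{k-2}|\nabla n|^2\phi^R_{x_0}$ and producing a term of the shape $C\chi^2k(k-1)\int n^k|\nabla c|^2\phi^R_{x_0}$. This exhausts the budget, $\frac{k(k-1)}{2}+\frac{k(k-1)}{4}=\frac{3k(k-1)}{4}$, so that exactly $\frac{k(k-1)}{4}$ of the dissipation remains, as claimed.

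It then remains to dispose of the lower-order products \emph{without any further use of} $\nabla n$. The term $C\chi^2k(k-1)\int n^k|\nabla c|^2\phi^R_{x_0}$ is split by Young's inequality with the conjugate pair $\big(\tfrac{k-1}{k-2},\,k-1\big)$ into a part $\le C_k\int n^{k+1}\phi^R_{x_0}$ and the ``saved'' term $k\int n^2|\nabla c|^{2k-2}\phi^R_{x_0}$ (for $k=2$ the product is already of this saved form, so no splitting is needed and the borderline exponent $\tfrac{k-1}{k-2}$ is never invoked). The second cut-off chemotactic term $\chi k\int n^k\nabla c\cdot\nabla\phi^R_{x_0}$ is estimated by Young's inequality with $|\nabla\phi^R_{x_0}|\le C/R$, choosing the exponents so that the power of $n$ stays $\le k+1$ (which is possible because $\frac{2k^2}{2k-1}\le k+1$), thereby yielding an absorbable $n^{k+1}$ piece plus a term bounded by $\frac{C3^dk}{R^{2k}}\|\nabla c\|^{2k}_{L^{2k}_{\uloc}}$. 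Finally the logistic contributions give $\lambda k\int n^k\phi^R_{x_0}\le C_k\int n^{k+1}\phi^R_{x_0}+C(\lambda+1)R^dk$ (using $\int\phi^R_{x_0}\le CR^d$) together with the genuinely dissipative $-\mu k\int n^{k+1}\phi^R_{x_0}$. Collecting every $\int n^{k+1}\phi^R_{x_0}$ coefficient into a single $(C_k-\mu k)$ and all pure constants into $C(\lambda+1)R^dk$ yields \eqref{nlock}. The main obstacle is the chemotactic term: one must balance the two integrations by parts so that the cross term $\nabla n\cdot\nabla c$ is absorbed into the diffusion with exactly the right share of the dissipation, while the leftover products are forced into the \emph{specific} shapes $n^2|\nabla c|^{2k-2}$, $\|\nabla c\|^{2k}_{L^{2k}_{\uloc}}$ and $n^{k+1}$ that can be controlled afterwards (the first two through the $L^2_{\uloc}$-bound on $\nabla c$ from Proposition~\ref{n1c2} feeding a Moser-type iteration, the last through the logistic damping); keeping the $k$-dependence of every Young constant explicit is the book-keeping price one has to pay.
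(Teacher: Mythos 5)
Your proposal is correct and follows essentially the same route as the paper's proof: test against $n^{k-1}\phi^R_{x_0}$, integrate by parts, spend half of the dissipation on the cut-off cross term and a quarter on the chemotactic cross term $\chi(k-1)\int n^{k-1}\nabla c\cdot\nabla n\,\phi^R_{x_0}$, then use the same Young/H\"older splittings to force the remainders into the shapes $n^2|\nabla c|^{2k-2}$, $\|\nabla c\|^{2k}_{L^{2k}_{\uloc}}$ and $n^{k+1}$, with the covering by $3^d$ balls converting local integrals into uniformly local norms. Your explicit remark that the borderline exponent $\tfrac{k-1}{k-2}$ is never needed when $k=2$ is a small but genuine improvement in precision over the paper's write-up.
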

\begin{proof}Multiplying E.q.$\eqref{che}_1$ by $n^{k-1}\phi^R_{x_0}$ and then integrating with respect to spatial variable, we obtain that
\begin{align*}
&\frac{1}{k}\frac{\dd}{\dd t}\int_{\RR^d}n^k\phi^R_{x_0}\dd x-\int_{\RR^d}\Delta n n^{k-1}\phi^R_{x_0}\dd x\\
=&-\chi\int_{\RR^d}\nabla \cdot(n\nabla c)n^{k-1}\phi^R_{x_0}\dd x+\lambda\int_{\RR^d} n^{k}\phi^R_{x_0}\dd x-\mu\int_{\RR^d} n^{k+1}\phi^R_{x_0}\dd x.
\end{align*}
Integration by parts  yields
\begin{align*}
-\int_{\RR^d}\Delta n n^{k-1}\phi^R_{x_0}\dd x=(k-1)\int_{\RR^d}|\nabla n|^2 n^{k-2}\phi^R_{x_0}\dd x+\int_{\RR^d}n^{k-1}\nabla n\cdot\nabla\phi^R_{x_0}\dd x,
\end{align*}
and
\begin{align*}
-\chi\int_{\RR^d}\nabla \cdot(n\nabla c)n^{k-1}\phi^R_{x_0}\dd x=\chi\int_{\RR^d}n^k\nabla c\cdot\nabla\phi^R_{x_0}\dd x+\chi(k-1)\int_{\RR^d}n^{k-1}\nabla c\cdot\nabla n\,\phi^R_{x_0}\dd x.
\end{align*}
Therefore, from the above two equalities, we obtain that
\begin{equation}\label{nk}
\begin{aligned}
&\frac{1}{k}\frac{\dd}{\dd t}\int_{\RR^d}n^k\phi^R_{x_0}\dd x+(k-1)\int_{\RR^d}|\nabla n|^2 n^{k-2}\phi^R_{x_0}\dd x\\
=&-\int_{\RR^d}n^{k-1}\nabla n\cdot\nabla\phi^R_{x_0}\dd x+\chi\int_{\RR^d}n^k\nabla c\cdot\nabla\phi^R_{x_0}\dd x+\lambda\int_{\RR^d} n^{k}\phi^R_{x_0}\dd x\\
&-\mu\int_{\RR^d} n^{k+1}\phi^R_{x_0}\dd x+\chi(k-1)\int_{\RR^d}n^{k-1}\nabla c\cdot\nabla n\,\phi^R_{x_0}\dd x.
\end{aligned}
\end{equation}
By virtue of H\"{o}lder's inequality and Young's inequality, we achieve that
\begin{align*}
-\int_{\RR^d}n^{k-1}\nabla n\cdot\nabla\phi^R_{x_0}\dd x\le \frac{k-1}{2}\int_{\RR^d}|\nabla n|^2 n^{k-2}\phi^R_{x_0}\dd x+\frac{1}{2(k-1)}\int_{\RR^d}n^k\frac{|\nabla \phi^R_{x_0}|^2}{\phi^R_{x_0}}\dd x.
\end{align*}
According to the expression of $\phi^R_{x_0}$, an easy computation yields that for $x\in B_{2R}(x_0)$,
\begin{align*}
&\phi^{-1}_{x_0, R}|\nabla \phi_{x_0, R}|^2\\=&\left(\frac{|x-x_0|}{4R^2}\right)^2\left( \frac{4R^2}{|x-x_0|^2-4R^2}\right)^4\exp\left(\frac{4}{3}+{\frac{4R^2}{|x-x_0|^2-4R^2}}\right)
\le \frac{C}{R^2}.
\end{align*}
Hence, it easy to check that
\begin{equation}\label{1}
\begin{aligned}
-\int_{\RR^d}n^{k-1}\nabla n\cdot\nabla\phi^R_{x_0}\dd x&\le \frac{k-1}{2}\int_{\RR^d}|\nabla n|^2 n^{k-2}\phi^R_{x_0}\dd x+\frac{C}{2(k-1)R^2}\int_{B_{2R}(x_0)}n^k\dd x\\
&\le\frac{k-1}{2}\int_{\RR^d}|\nabla n|^2 n^{k-2}\phi^R_{x_0}\dd x+\frac{C3^d}{2(k-1)R^2}\|n\|^k_{L^k_{\uloc}(\RR^d)}.
\end{aligned}
\end{equation}
Similarly, by H\"{o}lder's inequality and Young's inequality, using the definition of $\phi^R_{x_0}$, we infer that
\begin{equation}\label{2}
\begin{aligned}
&\chi\int_{\RR^d}n^k\nabla c\cdot\nabla\phi^R_{x_0}\dd x\\
\le& \chi^{\frac{k+1}{k}}\int_{\RR^d}n^{k+1}\phi^R_{x_0}\dd x+\int_{\RR^d}|\nabla c|^{2k}( \phi^R_{x_0})^{-\frac{k}{k+1}\cdot 2k}|\nabla \phi^R_{x_0}|^{2k}\dd x+\Big(\int_{B_{2R}(x_0)}1\dd x\Big)^{\frac{k-1}{2k(k+1)}}\\
\le& \chi^{\frac{k+1}{k}}\int_{\RR^d}n^{k+1}\phi^R_{x_0}\dd x+\frac{C}{R^{2k}}\int_{B_{2R}(x_0)}|\nabla c|^{2k}\dd x+CR^{\frac{k-1}{2k(k+1)}}\\
\le& \chi^{\frac{k+1}{k}}\int_{\RR^d}n^{k+1}\phi^R_{x_0}\dd x+\frac{C3^d}{R^{2k}}\|\nabla c\|^{2k}_{L^{2k}_{\uloc}(\RR^d)}+CR^{\frac{k-1}{2k(k+1)}}.
\end{aligned}
\end{equation}
Also,  the third term on the right-hand side of equality \eqref{nk} can be bounded  as follows
\begin{equation}\label{3}
\begin{aligned}
&\chi(k-1)\int_{\RR^d}n^{k-1}\nabla c\cdot\nabla n\,\phi^R_{x_0}\dd x\\
\le& \frac{k-1}{4}\int_{\RR^d}|\nabla n|^{2}n^{k-2}\phi^R_{x_0}\dd x+\chi^2(k-1)\int_{\RR^d}n^{k}|\nabla c|^2\phi^R_{x_0}\dd x\\
\le & \frac{k-1}{4}\int_{\RR^d}|\nabla n|^{2}n^{k-2}\phi^R_{x_0}\dd x+\int_{\RR^d}n^{2}|\nabla c|^{2k-2}\phi^R_{x_0}\dd x +\chi^{\frac{2(k-1)}{k-2}}(k-1)^{\frac{k-1}{k-2}}\int_{\RR^d}n^{k+1}\phi^R_{x_0}\dd x,
\end{aligned}
\end{equation}
in the third line, we have used the fact
\[\int_{\RR^d}n^{k}|\nabla c|^2\phi^R_{x_0}\dd x=\int_{\RR^d}\Big(n^{2}|\nabla c|^{2k-2}\Big)^{\frac{1}{k-1}}n^{\frac{(k-2)(k+1)}{k-1}}\phi^R_{x_0}\dd x.\]
Substituting inequalities \eqref{1}--\eqref{3} into equality \eqref{nk}, we obtain that
\begin{align*}
&\frac{1}{k}\frac{\dd}{\dd t}\int_{\RR^d}n^k\phi^R_{x_0}\dd x+\frac{k-1}{4}\int_{\RR^d}|\nabla n|^2 n^{k-2}\phi^R_{x_0}\dd x\\
\le&\frac{C3^d}{2(k-1)R^2}\|n\|^k_{L^k_{\uloc}(\RR^d)}+\frac{C3^d}{R^{2k}}\|\nabla c\|^{2k}_{L^{2k}_{\uloc}(\RR^d)}+CR^{\frac{k-1}{2k(k+1)}}+\int_{\RR^d}n^{2}|\nabla c|^{2k-2}\phi^R_{x_0}\dd x\\
&+\lambda\int_{\RR^d} n^{k}\phi^R_{x_0}\dd x+\left(\chi^{\frac{k+1}{k}}+\chi^{\frac{2(k-1)}{k-2}}(k-1)^{\frac{k-1}{k-2}}-\mu\right)\int_{\RR^d} n^{k+1}\phi^R_{x_0}\dd x.
\end{align*}
By H\"{o}lder's inequality, it yields that
\begin{align*}
&\lambda\int_{\RR^d} n^{k}\phi_{x_0, R}\dd x+\left(\chi^{\frac{k+1}{k}}+\chi^{\frac{2(k-1)}{k-2}}(k-1)^{\frac{k-1}{k-2}}-\mu\right)\int_{\RR^d} n^{k+1}\phi^R_{x_0}\dd x\\
\le&\left(\lambda+\chi^{\frac{k+1}{k}}+\chi^{\frac{2(k-1)}{k-2}}(k-1)^{\frac{k-1}{k-2}}-\mu\right)\int_{\RR^d} n^{k+1}\phi^R_{x_0}\dd x+C\lambda R^d.
\end{align*}
Hence, we conclude Proposition \ref{lemmank}.
\end{proof}
From Proposition \ref{lemmank}, we need to establish estimates on $\|\nabla c\|^{2k}_{L^{2k}_{\uloc}(\RR^d)}$ and $$\int_{\RR^d}n^{2}|\nabla c|^{2k-2}\phi^R_{x_0}\dd x.$$ These can be done by the following propositions.
\begin{proposition}Let $\tau>0$, $R\ge 1$ and $(n,c)$ be the solution of system \eqref{che-M}-\eqref{che-I-M}. For any $k\in\NN$ with $k\ge 2$, there exists an absolute constant $C$ such that
\begin{equation}\label{nablac2k}
\begin{aligned}
&\frac{\dd}{\dd t}\int_{\RR^d}|\nabla c|^{2k}\phi^R_{x_0}\dd x+\frac{k(k-1)}{4\tau}\int_{\RR^d}|\nabla |\nabla c|^2|^2|\nabla c|^{2k-4}\phi^R_{x_0}\dd x\\
&+\frac{k}{\tau}\int_{\RR^d}|D^2 c|^2|\nabla c|^{2k-2}\phi^R_{x_0}\dd x+\frac{2k}{\tau}\int_{\RR^d}|\nabla c|^{2k}\phi^R_{x_0}\dd x\\
\le&\frac{(d+1+2(k-1))k}{\tau}\int_{\RR^d} n^2|\nabla c|^{2k-2}\phi^R_{x_0}\dd x+\frac{C3^dk}{\tau R^2}\|\nabla c\|^{2k}_{L^{2k}_{\uloc}(\RR^d)}.
\end{aligned}
\end{equation}
\end{proposition}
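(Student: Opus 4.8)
The plan is to test the equation satisfied by $\nabla c$ against $|\nabla c|^{2k-2}\nabla c$ weighted by the cut-off, and then to dispatch the only two genuinely delicate contributions — the third-order term coming from $\nabla\Delta c$ and the coupling term carrying $\nabla n$ — by integration by parts, carefully splitting the resulting quantities between the two dissipation integrals on the left and the ``source'' $\int_{\RR^d}n^2|\nabla c|^{2k-2}\phi^R_{x_0}\dd x$, which is harmless at this stage since it will be absorbed in the subsequent propositions. Throughout I abbreviate $\phi:=\phi^R_{x_0}$. First I would differentiate $\eqref{che}_2$ to obtain $\tau\partial_t\nabla c=\nabla\Delta c-\nabla c+\nabla n$, take the inner product with $2k|\nabla c|^{2k-2}\nabla c$, multiply by $\phi$, integrate over $\RR^d$ and divide by $\tau$, which produces
\begin{align*}
\frac{\dd}{\dd t}\int_{\RR^d}|\nabla c|^{2k}\phi\dd x
=&\frac{2k}{\tau}\int_{\RR^d}|\nabla c|^{2k-2}\nabla c\cdot\nabla\Delta c\,\phi\dd x-\frac{2k}{\tau}\int_{\RR^d}|\nabla c|^{2k}\phi\dd x\\
&+\frac{2k}{\tau}\int_{\RR^d}|\nabla c|^{2k-2}\nabla c\cdot\nabla n\,\phi\dd x.
\end{align*}
The middle term is precisely the linear integral $\tfrac{2k}{\tau}\int|\nabla c|^{2k}\phi$ sitting on the left of \eqref{nablac2k}, so the whole task reduces to the first and third integrals.

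For the third-order integral I would use the pointwise identity $\nabla c\cdot\nabla\Delta c=\tfrac12\Delta|\nabla c|^2-|D^2c|^2$ already employed in Proposition \ref{n1c2}, so that with $w:=|\nabla c|^2$ it becomes $\tfrac{k}{\tau}\int w^{k-1}\Delta w\,\phi\dd x-\tfrac{2k}{\tau}\int w^{k-1}|D^2c|^2\phi\dd x$. Integrating the first piece by parts twice generates the negative gradient dissipation $-\tfrac{k(k-1)}{\tau}\int|\nabla |\nabla c|^2|^2|\nabla c|^{2k-4}\phi$ together with a remainder $\tfrac1\tau\int|\nabla c|^{2k}\Delta\phi$, which by $|\Delta\phi|\le C/R^2$ and the covering argument of Proposition \ref{n1c2} is bounded by $\tfrac{C3^d}{\tau R^2}\|\nabla c\|^{2k}_{L^{2k}_{\uloc}(\RR^d)}$. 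At this stage I have at my disposal the two dissipation integrals with coefficients $\tfrac{k(k-1)}{\tau}$ and $\tfrac{2k}{\tau}$, of which I will retain $\tfrac{k(k-1)}{4\tau}$ and $\tfrac{k}{\tau}$ on the left and spend the surplus to absorb the coupling term.

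The main obstacle is the coupling integral $\tfrac{2k}{\tau}\int|\nabla c|^{2k-2}\nabla c\cdot\nabla n\,\phi$, since $\nabla n$ cannot be estimated directly. I would integrate by parts to move the derivative off $n$, writing it as $-\tfrac{2k}{\tau}\int n\phi\,\nabla\cdot(|\nabla c|^{2k-2}\nabla c)\dd x-\tfrac{2k}{\tau}\int n|\nabla c|^{2k-2}\nabla c\cdot\nabla\phi\dd x$ and expanding $\nabla\cdot(|\nabla c|^{2k-2}\nabla c)=|\nabla c|^{2k-2}\Delta c+2(k-1)|\nabla c|^{2k-4}(D^2c\,\nabla c)\cdot\nabla c$, which yields three contributions. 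The $\Delta c$ term is controlled by $|\Delta c|^2\le d|D^2c|^2$ and Young's inequality, producing the coefficient $d$ in front of $\int n^2|\nabla c|^{2k-2}\phi$ while consuming the spare half $\tfrac{k}{\tau}$ of the Hessian dissipation; the Hessian term, after writing $(D^2c\,\nabla c)\cdot\nabla c=\tfrac12\nabla|\nabla c|^2\cdot\nabla c$ and applying Young against the gradient dissipation, yields the coefficient $2(k-1)$; and the $\nabla\phi$ term is handled using the crucial property $|\nabla\phi|^2/\phi\le C/R^2$ of the chosen cut-off (the same computation as in Proposition \ref{lemmank}).

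This final step is the delicate point. After factoring $|\nabla\phi|=\sqrt{\phi}\,(|\nabla\phi|/\sqrt{\phi})$ and applying Young with a balancing weight $\beta\sim R$, it produces both the remaining coefficient $1$ in front of $\int n^2|\nabla c|^{2k-2}\phi$ and only a $\tfrac{C3^d}{\tau R^2}\|\nabla c\|^{2k}_{L^{2k}_{\uloc}(\RR^d)}$ remainder; it is exactly this use of the weight $\phi$, rather than a plain cut-off, that upgrades the naive $1/R$ decay to the required $1/R^2$. Collecting the three coefficients into $d+2(k-1)+1$ and gathering the two $1/R^2$ remainders into $\tfrac{C3^dk}{\tau R^2}\|\nabla c\|^{2k}_{L^{2k}_{\uloc}(\RR^d)}$ then yields \eqref{nablac2k}.
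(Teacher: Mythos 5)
Your proposal is correct and follows essentially the same route as the paper: test the gradient equation against $|\nabla c|^{2k-2}\nabla c\,\phi^R_{x_0}$, use $\nabla c\cdot\nabla\Delta c=\tfrac12\Delta|\nabla c|^2-|D^2c|^2$, integrate the coupling term by parts onto $n$, and absorb the resulting $\Delta c$, Hessian and $\nabla\phi^R_{x_0}$ pieces via $|\Delta c|^2\le d|D^2c|^2$, Young's inequality against the two dissipation integrals, and $|\nabla\phi^R_{x_0}|^2/\phi^R_{x_0}\le C/R^2$ with the covering argument. The only cosmetic difference is that you integrate the $\Delta|\nabla c|^2$ term by parts twice (leaving a clean $\Delta\phi^R_{x_0}$ remainder) where the paper integrates once and applies Young to the cross term; both yield the claimed estimate.
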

\begin{proof}Taking $\nabla$ on E.q.$\eqref{che}_2$ and then taking inner product by $\nabla c|\nabla c|^{2k-2}\phi^R_{x_0}$, one has
\begin{align*}
&\frac{1}{2k}\frac{\dd}{\dd t}\int_{\RR^d}|\nabla c|^{2k}\phi^R_{x_0}\dd x-\frac{1}{\tau}\int_{\RR^d}\nabla\Delta c\cdot\nabla c|\nabla c|^{2k-2}\phi^R_{x_0}\dd x+\frac{1}{\tau}\int_{\RR^d}|\nabla c|^{2k}\phi^R_{x_0}\dd x\\
=&\frac{1}{\tau}\int_{\RR^d}\nabla n\cdot\nabla c|\nabla c|^{2k-2}\phi^R_{x_0}\dd x.
\end{align*}
By virtue of $$\nabla\Delta c\cdot\nabla c=-\frac{1}{2}\Delta |\nabla c|^2+|D^2 c|^2,$$ the above equality implies that
\begin{align*}
&\frac{1}{2k}\frac{\dd}{\dd t}\int_{\RR^d}|\nabla c|^{2k}\phi^R_{x_0}\dd x
 +\frac{1}{\tau}\int_{\RR^d}|D^2 c|^2|\nabla c|^{2k-2}\phi^R_{x_0}\dd x+\frac{1}{\tau}\int_{\RR^d}|\nabla c|^{2k}\phi^R_{x_0}\dd x
\\= &\frac{1}{\tau}\int_{\RR^d}\nabla n\cdot\nabla c|\nabla c|^{2k-2}\phi^R_{x_0}\dd x+\frac{1}{2\tau}\int_{\RR^d}\Delta |\nabla c|^2|\nabla c|^{2k-2}\phi^R_{x_0}\dd x.
\end{align*}
By integration by parts, it yields that
\begin{align*}
\begin{split}
&-\frac{1}{2\tau}\int_{\RR^d}\Delta |\nabla c|^2|\nabla c|^{2k-2}\phi^R_{x_0}\dd x\\
=&\frac{k-1}{2\tau}\int_{\RR^d}|\nabla |\nabla c|^2|^2|\nabla c|^{2k-4}\phi^R_{x_0}\dd x
+\frac{1}{2\tau}\int_{\RR^d}\nabla |\nabla c|^2\cdot\nabla\phi^R_{x_0}|\nabla c|^{2k-2}\dd x,
\end{split}
\end{align*}
where Young's inequality gives
\begin{align*}
\begin{split}
&\Big|\frac{1}{2\tau}\int_{\RR^d}\nabla |\nabla c|^2\cdot\nabla\phi^R_{x_0}|\nabla c|^{2k-2}\dd x
\Big|\\
\le&\frac{k-1}{4\tau}\int_{\RR^d}|\nabla |\nabla c|^2|^2|\nabla c|^{2k-4}\phi^R_{x_0}\dd x+\frac{1}{4\tau(k-1)}\int_{\RR^d}|\nabla c|^{2k}(\phi^R_{x_0})^{-1}|\nabla\phi^R_{x_0}|^{2}\dd x\\
\le&\frac{k-1}{4\tau}\int_{\RR^d}|\nabla |\nabla c|^2|^2|\nabla c|^{2k-4}\phi^R_{x_0}\dd x+\frac{C3^d}{4\tau(k-1)R^2}\|\nabla c\|^{2k}_{L^{2k}_{\uloc}(\RR^d)}.
\end{split}
\end{align*}
By integration by parts and Young's inequality, one gets that
\begin{align*}
&\frac{1}{\tau}\int_{\RR^d}\nabla n\cdot\nabla c|\nabla c|^{2k-2}\phi^R_{x_0}\dd x\\
=&-\frac{1}{\tau}\int_{\RR^d} n\Delta c|\nabla c|^{2k-2}\phi^R_{x_0}\dd x-\frac{k-1}{\tau}\int_{\RR^d} n\nabla c\cdot\nabla|\nabla c|^2|\nabla c|^{2k-4}\phi^R_{x_0}\dd x\\
&-\frac{1}{\tau}\int_{\RR^d} n\nabla c\cdot\nabla \phi^R_{x_0}|\nabla c|^{2k-2}\dd x\\
\le &\frac{1}{2d\tau}\int_{\RR^d} |\Delta c|^2|\nabla c|^{2k-2}\phi^R_{x_0}\dd x+\frac{d}{2\tau}\int_{\RR^d} n^2|\nabla c|^{2k-2}\phi^R_{x_0}\dd x\\&+\frac{k-1}{8\tau}\int_{\RR^d}|\nabla |\nabla c|^2|^2|\nabla c|^{2k-4}\phi^R_{x_0}\dd x+\frac{2(k-1)}{\tau}\int_{\RR^d}n^2|\nabla c|^{2k-2}\phi^R_{x_0}\dd x
\\&+\frac{1}{2\tau}\int_{\RR^d} n^2|\nabla c|^{2k-2}\phi^R_{x_0}\dd x+\frac{1}{2\tau}\int_{\RR^d}|\nabla c|^{2k}(\phi^R_{x_0})^{-1}|\nabla\phi^R_{x_0}|^2\dd x\\
\le &\frac{1}{2\tau}\int_{\RR^d} |D^2 c|^2|\nabla c|^{2k-2}\phi^R_{x_0}\dd x+\frac{d+1+4(k-1)}{2\tau}\int_{\RR^d} n^2|\nabla c|^{2k-2}\phi^R_{x_0}\dd x\\&+\frac{C3^d}{\tau R^2}\|\nabla c\|^{2k}_{L^{2k}_{\uloc}(\RR^d)}.
\end{align*}
To sum up, it is easy to check that
\begin{align*}
&\frac{1}{2k}\frac{\dd}{\dd t}\int_{\RR^d}|\nabla c|^{2k}\phi^R_{x_0}\dd x+\frac{k-1}{8\tau}\int_{\RR^d}|\nabla |\nabla c|^2|^2|\nabla c|^{2k-4}\phi^R_{x_0}\dd x\\
&+\frac{1}{2\tau}\int_{\RR^d}|D^2 c|^2|\nabla c|^{2k-2}\phi^R_{x_0}\dd x+\frac{1}{\tau}\int_{\RR^d}|\nabla c|^{2k}\phi^R_{x_0}\dd x\\
\le&\frac{d+1+4(k-1)}{2\tau}\int_{\RR^d} n^2|\nabla c|^{2k-2}\phi^R_{x_0}\dd x+\frac{C3^d}{\tau R^2}\|\nabla c\|^{2k}_{L^{2k}_{\uloc}(\RR^d)}.
\end{align*}
We end the proof of the proposition.
\end{proof}
In order to absorb the term on the right-hand side of inequalities \eqref{nlock} and \eqref{nablac2k}, making use of logistic term $\lambda n-\mu n^2$, this may be tackled by the following inequality.
\begin{proposition}\label{n1c2k-2}
Assume $\tau, \lambda, \chi>0, \mu\ge0$ and $R\ge 1$. Fixed $k\in\NN$ with $k\ge 2$, there exist a constant $C_1(\chi,\tau,\lambda, k)$ such that the solution $(n,c)$ of equations \eqref{che-M}-\eqref{che-I-M} fulfills the following estimate
\begin{align*}
&\frac{\dd}{\dd t}\int_{\RR^d}n|\nabla c|^{2k-2}\phi^R_{x_0}\dd x+\frac{(k-1)(k-2)}{2\tau}\int_{\RR^d}|\nabla|\nabla c|^2|^2|\nabla c|^{2k-6}n\phi^R_{x_0}\dd x\\
&+\frac{2k-2}{\tau}\int_{\RR^d}|D^2 c|^2|\nabla c|^{2k-4}n\phi^R_{x_0}\dd x\\
\le&C_1\int_{\RR^d}|\nabla|\nabla c|^2|^2|\nabla c|^{2k-4}\phi^R_{x_0}\dd x
+\frac{\lambda}{2}\int_{\RR^d}|\nabla c|^{2k-2}\phi^R_{x_0}\dd x+\frac{C3^d(1+\frac{1}{\tau})}{ R^{2}}\|\nabla c\|^{2k}_{L^{2k}_{\uloc}(\RR^d)}\\
&+\frac{C3^d}{\tau R^{2}}\|n\|^{k}_{L^{k}_{\uloc}(\RR^d)}+(C_1-\mu)
\int_{\RR^d}n^2|\nabla c|^{2k-2}\phi^R_{x_0}\dd x+\int_{\RR^d}|\nabla n|^2|\nabla c|^{2k-4}\phi^R_{x_0}\dd x.
\end{align*}
\end{proposition}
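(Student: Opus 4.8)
The plan is to differentiate the mixed functional $\int_{\RR^d}n\,|\nabla c|^{2k-2}\phi^R_{x_0}\dd x$ and substitute both equations of \eqref{che-M}. Abbreviating $w:=|\nabla c|^2$, the product rule yields
\[\frac{\dd}{\dd t}\int_{\RR^d}n\,w^{k-1}\phi^R_{x_0}\dd x=\int_{\RR^d}(\partial_t n)w^{k-1}\phi^R_{x_0}\dd x+(k-1)\int_{\RR^d}n\,w^{k-2}(\partial_t w)\phi^R_{x_0}\dd x.\]
Into the first integral I would insert $\partial_t n=\Delta n-\chi\nabla\cdot(n\nabla c)+\lambda n-\mu n^2$; the logistic part produces exactly $\lambda\int n w^{k-1}\phi^R_{x_0}\dd x-\mu\int n^2 w^{k-1}\phi^R_{x_0}\dd x$, the latter being the origin of the damping $-\mu\int n^2|\nabla c|^{2k-2}\phi^R_{x_0}\dd x$ on the right. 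For $\partial_t w$ I would use $\tau\partial_t c=\Delta c-c+n$ together with the Bochner identity $\nabla c\cdot\nabla\Delta c=\tfrac12\Delta w-|D^2c|^2$ already invoked in the paper, which gives $\partial_t w=\tfrac1\tau\Delta w-\tfrac2\tau|D^2c|^2-\tfrac2\tau w+\tfrac2\tau\nabla c\cdot\nabla n$.

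Next I would integrate by parts in the three second-order contributions: $\Delta n$ tested against $w^{k-1}\phi^R_{x_0}$, the chemotactic flux $\nabla\cdot(n\nabla c)$, and the term $\tfrac{k-1}\tau\Delta w$ tested against $n\,w^{k-2}\phi^R_{x_0}$. Differentiating the weight $w^{k-2}$ in the last of these yields the coercive quantity $\tfrac{(k-1)(k-2)}\tau\int n|\nabla w|^2 w^{k-3}\phi^R_{x_0}\dd x$, of which I keep half on the left (the stated $\tfrac{(k-1)(k-2)}{2\tau}$-term), while the $-\tfrac2\tau|D^2c|^2$ piece of $\partial_t w$ directly furnishes the second coercive term $\tfrac{2k-2}\tau\int|D^2c|^2 w^{k-2}n\,\phi^R_{x_0}\dd x$. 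Everything else is a collection of cross terms of the forms $\nabla n\cdot\nabla w$, $\nabla c\cdot\nabla w$, couplings against $\nabla\phi^R_{x_0}$, and the zeroth-order mixed terms $n\,w^{k-1}$ and $n^2 w^{k-1}$.

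The heart of the argument is then a careful application of H\"older's and Young's inequalities to each cross term. The decisive choice occurs in the chemotactic coupling $\chi(k-1)\int n(\nabla c\cdot\nabla w)w^{k-2}\phi^R_{x_0}\dd x$: using $|\nabla c|=w^{1/2}$ I would split it as $\e\int|\nabla w|^2 w^{k-2}\phi^R_{x_0}\dd x+C_\e\int n^2 w^{k-1}\phi^R_{x_0}\dd x$, that is, toward the $n$-free dissipation $\int|\nabla|\nabla c|^2|^2|\nabla c|^{2k-4}\phi^R_{x_0}\dd x$ and toward $\int n^2|\nabla c|^{2k-2}\phi^R_{x_0}\dd x$, rather than toward the $n$-weighted dissipation; this is precisely what prevents the appearance of the uncontrollable term $\int n|\nabla c|^{2k}\phi^R_{x_0}\dd x$. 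Collecting every positive contribution to $\int n^2 w^{k-1}\phi^R_{x_0}\dd x$ — from the chemotactic term, from the coupling $\tfrac2\tau\nabla c\cdot\nabla n$, and from $\lambda n w^{k-1}\le\tfrac\lambda2 w^{k-1}+\tfrac\lambda2 n^2 w^{k-1}$ (which simultaneously accounts for the $\tfrac\lambda2\int|\nabla c|^{2k-2}\phi^R_{x_0}\dd x$ term) — into one constant $C_1(\chi,\tau,\lambda,k)$ delivers the crucial $(C_1-\mu)\int n^2|\nabla c|^{2k-2}\phi^R_{x_0}\dd x$.

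The $R^{-2}$ terms arise through the cutoff: every coupling against $\nabla\phi^R_{x_0}$ is handled with $|\nabla\phi^R_{x_0}|\le C/R$ and $\phi^{-1}|\nabla\phi^R_{x_0}|^2\le C/R^2$ (verified for this cutoff in the preceding proposition), and the leftover mixed remainders $R^{-2}\int n\,w^{k-1}\phi^R_{x_0}\dd x$ are split by Young, $n w^{k-1}\le\tfrac1k n^k+\tfrac{k-1}k w^k$; the covering $B_{2R}(x_0)\subseteq\bigcup_{i=1}^{3^d}B_R(y_i)$ then converts the resulting integrals over $B_{2R}(x_0)$ into $3^d\|n\|^k_{L^k_\uloc(\RR^d)}$ and $3^d\|\nabla c\|^{2k}_{L^{2k}_\uloc(\RR^d)}$, producing the two stated $R^{-2}$ terms (the factor $1+\tfrac1\tau$ merely records which equation a given contribution came from). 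The main obstacle is not any individual estimate but the bookkeeping of the many Young splittings: one must allocate the $\e$'s so that the two $n$-weighted dissipation integrals survive on the left with exactly the stated constants while the $\nabla\phi^R_{x_0}$- and $\nabla n\cdot\nabla w$-couplings are absorbed, and so that the coefficient of $\int|\nabla n|^2|\nabla c|^{2k-4}\phi^R_{x_0}\dd x$ is held at $1$; this last term is deliberately not absorbed here but retained for the subsequent combination of the three differential inequalities. As stressed above, the only genuinely dangerous term is $\int n|\nabla c|^{2k}\phi^R_{x_0}\dd x$, and the chemotactic splitting is engineered precisely to avoid it.
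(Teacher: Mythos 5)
Your proposal follows essentially the same route as the paper's proof: differentiate the mixed functional, substitute both equations, apply the Bochner identity to $\nabla\Delta c\cdot\nabla c$, integrate by parts to extract the two $n$-weighted coercive terms, and crucially split the chemotactic coupling $\chi(k-1)\int n\,\nabla c\cdot\nabla|\nabla c|^2\,|\nabla c|^{2k-4}\phi^R_{x_0}\dd x$ toward the $n$-free dissipation and $\int n^2|\nabla c|^{2k-2}\phi^R_{x_0}\dd x$, exactly as in the paper's estimate of that term. The bookkeeping of the cutoff terms, the Young splitting $n|\nabla c|^{2k-2}\le\tfrac1k n^k+\tfrac{k-1}{k}|\nabla c|^{2k}$, and the deliberate retention of $\int|\nabla n|^2|\nabla c|^{2k-4}\phi^R_{x_0}\dd x$ with coefficient $1$ all match the paper, so the proposal is correct.
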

\begin{proof}By equations \eqref{che}, it yields that
\begin{equation}\label{nc2k}
\begin{aligned}
&\frac{\dd}{\dd t}\int_{\RR^d}n|\nabla c|^{2k-2}\phi^R_{x_0}\dd x+\frac{2k-2}{\tau}\int_{\RR^d}n|\nabla c|^{2k-2}\phi^R_{x_0}\dd x\\
=&\int_{\RR^d}\Delta n|\nabla c|^{2k-2}\phi^R_{x_0}\dd x-\chi\int_{\RR^d}\nabla\cdot(n\nabla c)|\nabla c|^{2k-2}\phi^R_{x_0}\dd x+\lambda \int_{\RR^d}n|\nabla c|^{2k-2}\phi^R_{x_0}\dd x\\
&-\mu \int_{\RR^d}n^2|\nabla c|^{2k-2}\phi^R_{x_0}\dd x+\frac{2k-2}{\tau}\int_{\RR^d}\nabla\Delta c\cdot\nabla c|\nabla c|^{2k-4}n\phi^R_{x_0}\dd x
\\
&+\frac{2k-2}{\tau}\int_{\RR^d}\nabla n\cdot\nabla c|\nabla c|^{2k-4}n\phi^R_{x_0}\dd x.
\end{aligned}
\end{equation}
By virtue of integration by parts, we obtain that
\begin{equation}\label{nc2k-21}
\begin{aligned}
&-\chi\int_{\RR^d}\nabla\cdot(n\nabla c)|\nabla c|^{2k-2}\phi^R_{x_0}\dd x\\
=&\chi(k-1)\int_{\RR^d}n\nabla c\cdot\nabla|\nabla c|^2|\nabla c|^{2k-4}\phi^R_{x_0}\dd x
+\chi\int_{\RR^d}n\nabla c\cdot\nabla\phi^R_{x_0}|\nabla c|^{2k-2}\dd x\\
\le&\frac{(k-1)^2}{2\tau^2}\int_{\RR^d}|\nabla|\nabla c|^2|^2|\nabla c|^{2k-4}\phi^R_{x_0}\dd x+\frac{\chi^2\tau^2}{2}\int_{\RR^d}n^2|\nabla c|^{2k-2}\phi^R_{x_0}\dd x\\
&+\frac{\chi^2}{2}\int_{\RR^d}n^2|\nabla c|^{2k-2}\phi^R_{x_0}\dd x
+\frac{1}{2}\int_{\RR^d}|\nabla c|^{2k}(\phi^R_{x_0})^{-1}|\nabla \phi^R_{x_0}|^2\dd x\\
\le&\frac{(k-1)^2}{2\tau^2}\int_{\RR^d}|\nabla|\nabla c|^2|^2|\nabla c|^{2k-4}\phi^R_{x_0}\dd x+\frac{\chi^2(1+\tau^2)}{2}\int_{\RR^d}n^2|\nabla c|^{2k-2}\phi^R_{x_0}\dd x
\\&+\frac{C3^d}{R^2}\|\nabla c\|^{2k}_{L^{2k}_{\uloc}(\RR^d)}.
\end{aligned}
\end{equation}
Following the above methods, we can infer that
\begin{align*}
&\frac{2k-2}{\tau}\int_{\RR^d}\nabla\Delta c\cdot\nabla c|\nabla c|^{2k-4}n\phi^R_{x_0}\dd x\\
=&\frac{k-1}{\tau}\int_{\RR^d}\Delta|\nabla c|^2|\nabla c|^{2k-4}n\phi^R_{x_0}\dd x-\frac{2k-2}{\tau}\int_{\RR^d}|D^2 c|^2|\nabla c|^{2k-4}n\phi^R_{x_0}\dd x\\
=&-\frac{(k-1)(k-2)}{\tau}\int_{\RR^d}|\nabla|\nabla c|^2|^2|\nabla c|^{2k-6}n\phi^R_{x_0}\dd x-\frac{k-1}{\tau}\int_{\RR^d}\nabla|\nabla c|^2\cdot\nabla n|\nabla c|^{2k-4}\phi^R_{x_0}\dd x\\
&-\frac{k-1}{\tau}\int_{\RR^d}\nabla|\nabla c|^2\cdot\nabla \phi^R_{x_0}|\nabla c|^{2k-4}n\dd x
-\frac{2k-2}{\tau}\int_{\RR^d}|D^2 c|^2|\nabla c|^{2k-4}n\phi^R_{x_0}\dd x,
\end{align*}
where H\"{o}lder's and Young's inequalities yields that
\begin{align*}
&-\frac{k-1}{\tau}\int_{\RR^d}\nabla|\nabla c|^2\cdot\nabla n|\nabla c|^{2k-4}\phi^R_{x_0}\dd x\\
\le&\frac{1}{2}\int_{\RR^d}|\nabla n|^2|\nabla c|^{2k-4}\phi^R_{x_0}\dd x+\frac{(k-1)^2}{2\tau^2}\int_{\RR^d}|\nabla|\nabla c|^2|^2|\nabla c|^{2k-4}\phi^R_{x_0}\dd x,
\end{align*}
and
\begin{align*}
&-\frac{k-1}{\tau}\int_{\RR^d}\nabla|\nabla c|^2\cdot\nabla \phi^R_{x_0}|\nabla c|^{2k-4}n\dd x\\
\le&\frac{(k-1)(k-2)}{2\tau}\int_{\RR^d}|\nabla|\nabla c|^2|^2|\nabla c|^{2k-6}n\phi^R_{x_0}\dd x
+\frac{(k-1)}{2\tau(k-2)}\int_{\RR^d}|\nabla c|^{2k-2}n(\phi^R_{x_0})^{-1}|\nabla \phi^R_{x_0}|^2\dd x\\
\le&\frac{(k-1)(k-2)}{2\tau}\int_{\RR^d}|\nabla|\nabla c|^2|^2|\nabla c|^{2k-6}n\phi^R_{x_0}\dd x
+\frac{C3^d}{\tau R^{2}}\|\nabla c\|^{2k}_{L^{2k}_{\uloc}(\RR^d)}+\frac{C3^d}{\tau R^{2}}\|n\|^{k}_{L^{k}_{\uloc}(\RR^d)}.
\end{align*}
Hence, we conclude that
\begin{equation}\label{nc2k-22}
\begin{aligned}
&\frac{2k-2}{\tau}\int_{\RR^d}\nabla\Delta c\cdot\nabla c|\nabla c|^{2k-4}n\phi^R_{x_0}\dd x\\
\le&-\frac{(k-1)(k-2)}{2\tau}\int_{\RR^d}|\nabla|\nabla c|^2|^2|\nabla c|^{2k-6}n\phi^R_{x_0}\dd x+\frac{1}{2}\int_{\RR^d}|\nabla n|^2|\nabla c|^{2k-4}\phi^R_{x_0}\dd x\\
&+\frac{(k-1)^2}{2\tau^2}\int_{\RR^d}|\nabla|\nabla c|^2|^2|\nabla c|^{2k-4}\phi^R_{x_0}\dd x-\frac{2k-2}{\tau}\int_{\RR^d}|D^2 c|^2|\nabla c|^{2k-4}n\phi^R_{x_0}\dd x\\
&+\frac{C3^d}{\tau R^{2}}\left(\|\nabla c\|^{2k}_{L^{2k}_{\uloc}(\RR^d)}+\|n\|^{k}_{L^{k}_{\uloc}(\RR^d)}\right).
\end{aligned}
\end{equation}
Similarly, it is easy to check that
\begin{align*}
&\int_{\RR^d}\Delta n|\nabla c|^{2k-2}\phi^R_{x_0}\dd x+\frac{2k-2}{\tau}\int_{\RR^d}\nabla n\cdot\nabla c|\nabla c|^{2k-4}n\phi^R_{x_0}\dd x\\
\le&-\int_{\RR^d}\nabla n \cdot\nabla \phi^R_{x_0}|\nabla c|^{2k-2}\dd x-(k-1)\int_{\RR^d}\nabla n \cdot\nabla |\nabla c|^2 |\nabla c|^{2k-4}\phi^R_{x_0}\dd x\\
&+\frac{1}{4}\int_{\RR^d}|\nabla n|^2|\nabla c|^{2k-4}\phi^R_{x_0}\dd x+\frac{(2k-2)^2}{\tau^2}\int_{\RR^d}n^2|\nabla c|^{2k-2}\phi^R_{x_0}\dd x,
\end{align*}
moreover, we get by the H\"older inequality that
\begin{equation}\label{nc2k-23}
\begin{aligned}
&\int_{\RR^d}\Delta n|\nabla c|^{2k-2}\phi^R_{x_0}\dd x+\frac{2k-2}{\tau}\int_{\RR^d}\nabla n\cdot\nabla c|\nabla c|^{2k-4}n\phi^R_{x_0}\dd x\\
\le& \frac{1}{2}\int_{\RR^d}|\nabla n|^2|\nabla c|^{2k-4}\phi^R_{x_0}\dd x+2(k-1)^2\int_{\RR^d}|\nabla |\nabla c|^2|^2 |\nabla c|^{2k-4}\phi^R_{x_0}\dd x\\&+\frac{(2k-2)^2}{\tau^2}\int_{\RR^d}n^2|\nabla c|^{2k-2}\phi^R_{x_0}\dd x+\frac{C3^d}{R^2}\|\nabla c\|^{2k}_{L^{2k}_{\uloc}(\RR^d)}.
\end{aligned}
\end{equation}
Noting the fact that
\begin{align*}
\lambda\int_{\RR^d}n|\nabla c|^{2k-2}\phi^R_{x_0}\dd x\le \frac{\lambda}{2}\int_{\RR^d}n^2|\nabla c|^{2k-2}\phi^R_{x_0}\dd x
+\frac{\lambda}{2}\int_{\RR^d}|\nabla c|^{2k-2}\phi^R_{x_0}\dd x,
\end{align*}
combining with inequalities \eqref{nc2k-21}--\eqref{nc2k-23}, we deduce from \eqref{nc2k} that
\begin{align*}
&\frac{\dd}{\dd t}\int_{\RR^d}n|\nabla c|^{2k-2}\phi^R_{x_0}\dd x+\frac{(k-1)(k-2)}{2\tau}\int_{\RR^d}|\nabla|\nabla c|^2|^2|\nabla c|^{2k-6}n\phi^R_{x_0}\dd x\\
&+\frac{2k-2}{\tau}\int_{\RR^d}|D^2 c|^2|\nabla c|^{2k-4}n\phi^R_{x_0}\dd x\\
\le&\frac{(k-1)^2(1+2\tau^2)}{2\tau^2}\int_{\RR^d}|\nabla|\nabla c|^2|^2|\nabla c|^{2k-4}\phi^R_{x_0}\dd x
+\frac{\lambda}{2}\int_{\RR^d}|\nabla c|^{2k-2}\phi^R_{x_0}\dd x\\
&+\frac{C3^d}{\tau R^{2}}\|n\|^{k}_{L^{k}_{\uloc}(\RR^d)}+\left(\frac{\lambda+\chi^2(1+\tau^2)}{2}+\frac{(2k-2)^2}{\tau^2}-\mu\right)\int_{\RR^d}n^2|\nabla c|^{2k-2}\phi^R_{x_0}\dd x\\
&+\int_{\RR^d}|\nabla n|^2|\nabla c|^{2k-4}\phi^R_{x_0}\dd x+\frac{C3^d(1+\frac{1}{\tau})}{ R^{2}}\|\nabla c\|^{2k}_{L^{2k}_{\uloc}(\RR^d)}.
\end{align*}
We finish the proof of proposition.
\end{proof}
To digest the last term on the right-hand side of the inequality in Lemma \ref{n1c2k-2}, using the coupling structure of equations \eqref{che-M}-\eqref{che-I-M}, we shall to bound the following integral term
$$\int_{\RR^d} n^j|\nabla c|^{2k-2j}\dd x\,\,(j=2,3,\cdots,k-1)$$
as shown in the following proposition.
\begin{proposition}\label{njc2k-2j}Assume $\tau, \chi,\mu>0,\lambda\ge 0$ and $R\ge 1$. Let $k\in\NN$ with $k\ge 3$ and $j\in\NN$ with $2\le j\le k-1.$ Then there exist constant $C_j$ depending on $j,\lambda, \mu, \chi$ and an absolute constant $C$ such that the solution $(n, c)$ of system \eqref{che-M}-\eqref{che-I-M} satisfies
\begin{align*}
&\frac{\dd}{\dd t}\int_{\RR^d}n^j|\nabla c|^{2k-2j}\phi^R_{x_0}\dd x+\frac{j(j-1)}{4}\int_{\RR^d}n^{j-2}|\nabla n|^2|\nabla c|^{2k-2j}\phi^R_{x_0}\dd x\\
\le&\int_{\RR^d}n^{j-1}|\nabla n|^2|\nabla c|^{2k-2j-2}\phi^R_{x_0}\dd x+C_j\int_{\RR^d}|\nabla c|^{2k-4}|\nabla |\nabla c|^2|^2\phi^R_{x_0}\dd x\\
&+(C_j-\mu j)\int_{\RR^d}n^{j+1}|\nabla c|^{2k-2j}\phi^R_{x_0}\dd x+\lambda j\int_{\RR^d}|\nabla c|^{2k-2}\phi^R_{x_0}\dd x+C\lambda jR^{d}\\
&+C_j\int_{\RR^d}n^2|\nabla c|^{2k-2}\phi^R_{x_0}\dd x+\frac{C_j}{R^2}\|n\|^k_{L^k_{\uloc}(\RR^d)}
+\frac{C_j}{R^2}\|\nabla c\|^{2k}_{L^{2k}_{\uloc}(\RR^d)}.
\end{align*}
\end{proposition}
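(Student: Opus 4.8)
The plan is to differentiate the mixed quantity $\int_{\RR^d} n^{j}|\nabla c|^{2k-2j}\phi^R_{x_0}\dd x$ in time and to substitute both equations of \eqref{che-M}, following exactly the template of Proposition \ref{n1c2k-2}, which is the special case $j=1$. Writing $m:=k-j$, the time derivative decomposes into a contribution from $\partial_t(n^{j})=jn^{j-1}\partial_t n$, where I insert $\partial_t n=\Delta n-\chi\nabla\cdot(n\nabla c)+\lambda n-\mu n^2$, and a contribution from $\partial_t(|\nabla c|^{2m})=2m|\nabla c|^{2m-2}\nabla c\cdot\nabla\partial_t c$, where I insert $\tau\nabla\partial_t c=\nabla\Delta c-\nabla c+\nabla n$. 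Integrating the diffusion term $j\int n^{j-1}\Delta n\,|\nabla c|^{2m}\phi^R_{x_0}\dd x$ by parts produces the dissipation $-j(j-1)\int n^{j-2}|\nabla n|^{2}|\nabla c|^{2m}\phi^R_{x_0}\dd x$, of which I retain the stated quarter and spend the remaining $\frac{3}{4}j(j-1)$ to absorb cross terms. For the $\nabla\Delta c$ contribution I use the identity $\nabla c\cdot\nabla\Delta c=\frac{1}{2}\Delta|\nabla c|^{2}-|D^{2}c|^{2}$ already employed in Propositions \ref{n1c2} and \ref{n1c2k-2}: the $|D^{2}c|^{2}$ part carries a favourable sign and is discarded, while the $\Delta|\nabla c|^{2}$ part is integrated by parts once more, yielding a second good (negative) dissipation $-\frac{m(m-1)}{\tau}\int n^{j}|\nabla c|^{2m-4}|\nabla|\nabla c|^{2}|^{2}\phi^R_{x_0}\dd x$ together with a cross term in $\nabla n\cdot\nabla|\nabla c|^{2}$ and a cutoff term carrying $\nabla\phi^R_{x_0}$.

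After these manipulations every surviving term is treated by Hölder's and Young's inequalities. The genuine interaction terms — the chemotactic terms $j\chi(j-1)\int n^{j-1}\nabla n\cdot\nabla c\,|\nabla c|^{2m}\phi^R_{x_0}$ and $j\chi m\int n^{j}|\nabla c|^{2m-2}\nabla|\nabla c|^{2}\cdot\nabla c\,\phi^R_{x_0}$, and the term $\frac{2m}{\tau}\int n^{j}|\nabla c|^{2m-2}\nabla n\cdot\nabla c\,\phi^R_{x_0}$ from $\nabla\partial_t c$ — are split by Young so that each $\nabla n$ factor is deposited into the $n$-diffusion dissipation $n^{j-2}|\nabla n|^{2}|\nabla c|^{2m}$ (or into the ``one level lower'' term $\int n^{j-1}|\nabla n|^{2}|\nabla c|^{2m-2}\phi^R_{x_0}$, which I deliberately keep on the right since it is precisely the dissipation of this proposition with $j$ replaced by $j+1$, closing the hierarchy upon combination), and each $\nabla|\nabla c|^{2}$ factor is sent either into the biharmonic-type dissipation above or into the clean term $C_j\int|\nabla c|^{2k-4}|\nabla|\nabla c|^{2}|^{2}\phi^R_{x_0}$.

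The decisive algebraic point is the treatment of the quadratic interaction, which upon Young produces the monomial $n^{j}|\nabla c|^{2m+2}$. Because $j\ge2$, the exponent pair of $n^{j}|\nabla c|^{2(k-j+1)}$ is the convex combination, with weight $\frac{j-2}{j-1}$, of the pairs of the damping monomial $n^{j+1}|\nabla c|^{2(k-j)}$ and of $n^{2}|\nabla c|^{2(k-1)}$; a single Young's inequality therefore bounds it by a multiple of $n^{j+1}|\nabla c|^{2m}$, which I merge with the logistic damping to form $(C_j-\mu j)\int n^{j+1}|\nabla c|^{2m}\phi^R_{x_0}\dd x$, plus a multiple of $n^{2}|\nabla c|^{2k-2}$, giving $C_j\int n^{2}|\nabla c|^{2k-2}\phi^R_{x_0}\dd x$. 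The linear logistic term is handled as in Proposition \ref{lemmank}: $\lambda j\int n^{j}|\nabla c|^{2m}\phi^R_{x_0}$ is split by Young into a fraction of the $n^{j+1}|\nabla c|^{2m}$ term and, using $|\nabla c|^{2(k-j)}\le C|\nabla c|^{2(k-1)}+C$ (valid since $k-j\le k-2<k-1$), into $\lambda j\int|\nabla c|^{2k-2}\phi^R_{x_0}+C\lambda jR^{d}$. Finally, each cutoff term is estimated through properties (ii)--(iii), in particular $(\phi^R_{x_0})^{-1}|\nabla\phi^R_{x_0}|^{2}\le C/R^{2}$; after raising the surviving $|\nabla c|$- and $n$-powers to the top levels $2k$ and $k$ by Young and covering $B_{2R}(x_0)$ by the $3^{d}$ balls $\{B_R(y_i)\}$, these collapse into $\frac{C_j}{R^{2}}\|\nabla c\|^{2k}_{L^{2k}_{\uloc}(\RR^d)}$ and $\frac{C_j}{R^{2}}\|n\|^{k}_{L^{k}_{\uloc}(\RR^d)}$, the leftover constants being absorbed into the $R^{d}$ term.

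The main obstacle is not any single estimate but the bookkeeping: one must choose the Young weights throughout so that the total drain on the dissipation $n^{j-2}|\nabla n|^{2}|\nabla c|^{2m}$ stays below $\frac{3}{4}j(j-1)$ and, more delicately, so that after every splitting no monomial outside the admissible list $\{n^{j+1}|\nabla c|^{2m},\,n^{2}|\nabla c|^{2k-2},\,n^{j-1}|\nabla n|^{2}|\nabla c|^{2m-2},\,|\nabla c|^{2k-4}|\nabla|\nabla c|^{2}|^{2},\,|\nabla c|^{2k-2}\}$ survives. This exact matching of exponents, which forces the convex-combination Young's inequalities and is where the restriction $j\ge2$ enters, is what makes the whole family over $j=2,\dots,k-1$ telescope when the propositions are assembled in the final iteration.
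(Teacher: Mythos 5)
Your plan coincides with the paper's proof in all of its essential mechanisms: the same decomposition of $\frac{\dd}{\dd t}\int_{\RR^d} n^j|\nabla c|^{2k-2j}\phi^R_{x_0}\dd x$ obtained by substituting both equations, the same convex-combination Young inequalities (in particular $n^{j}|\nabla c|^{2(k-j+1)}=\big(n^{j+1}|\nabla c|^{2(k-j)}\big)^{\frac{j-2}{j-1}}\big(n^{2}|\nabla c|^{2(k-1)}\big)^{\frac{1}{j-1}}$, which is exactly how the paper absorbs the chemotactic quadratic term, and which is where $j\ge 2$ enters), the same retention of $\int_{\RR^d} n^{j-1}|\nabla n|^{2}|\nabla c|^{2k-2j-2}\phi^R_{x_0}\dd x$ on the right so that the family telescopes against the $(j+1)$-st dissipation, and the same cutoff/covering bookkeeping leading to the $R^{-2}$-weighted uniformly local norms.

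One step, however, would fail as stated: you discard the $|D^{2}c|^{2}$ dissipation produced by $\nabla c\cdot\nabla\Delta c=\tfrac12\Delta|\nabla c|^{2}-|D^{2}c|^{2}$ and propose to absorb the $n$-weighted gradient remainders into $-\frac{m(m-1)}{\tau}\int_{\RR^d} n^{j}|\nabla c|^{2m-4}|\nabla|\nabla c|^{2}|^{2}\phi^R_{x_0}\dd x$ with $m=k-j$. The Young remainders you must absorb (for instance $n^{j-1}|\nabla|\nabla c|^{2}|^{2}|\nabla c|^{2m-2}$ coming from the cross term $\nabla n\cdot\nabla|\nabla c|^{2}$, which after the split $n^{j-1}|\nabla c|^{2m-2}=(n^{j}|\nabla c|^{2m-4})^{\frac{j-1}{j}}(|\nabla c|^{2k-4})^{\frac1j}$ leaves a multiple of $n^{j}|\nabla c|^{2m-4}|\nabla|\nabla c|^{2}|^{2}$) carry the factor $n^{j}$ and therefore fit neither into the clean term $\int_{\RR^d}|\nabla c|^{2k-4}|\nabla|\nabla c|^{2}|^{2}\phi^R_{x_0}\dd x$ nor into any damping monomial on your admissible list; they can only be eaten by a dissipation weighted by $n^{j}$. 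For $j=k-1$ one has $m=1$, the coefficient $m(m-1)$ vanishes, and your plan has nothing left to receive them, so the estimate does not close at that endpoint. The paper's remedy is to keep the term $-\frac{2(k-j)}{\tau}\int_{\RR^d} n^{j}|D^{2}c|^{2}|\nabla c|^{2m-2}\phi^R_{x_0}\dd x$, whose coefficient is strictly positive for every $j\le k-1$, and to route all such remainders into it through the pointwise bound $|\nabla|\nabla c|^{2}|^{2}\le 4|D^{2}c|^{2}|\nabla c|^{2}$ of \eqref{D^2}; with that single modification your argument goes through for the whole range $2\le j\le k-1$.
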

\begin{proof}
From system \eqref{che-M}, we can deduce that
\begin{align*}
&\frac{\dd}{\dd t}\int_{\RR^d}n^j|\nabla c|^{2k-2j}\phi^R_{x_0}\dd x+\frac{2(k-j)}{\tau}\int_{\RR^d}n^j|\nabla c|^{2k-2j}\phi^R_{x_0}\dd x\\
=&j\int_{\RR^d}\Delta n|\nabla c|^{2k-2j}n^{j-1}\phi^R_{x_0}\dd x-\chi j\int_{\RR^d}\nabla\cdot(n\nabla c)|\nabla c|^{2k-2j}n^{j-1}\phi^R_{x_0}\dd x\\
&+\lambda j\int_{\RR^d}n^j|\nabla c|^{2k-2j}\phi^R_{x_0}\dd x-\mu j\int_{\RR^d}n^{j+1}|\nabla c|^{2k-2j}\phi^R_{x_0}\dd x\\
&+\frac{2(k-j)}{\tau}\int_{\RR^d}\nabla\Delta c\cdot\nabla c n^j|\nabla c|^{2k-2j-2}\phi^R_{x_0}\dd x+\frac{2(k-j)}{\tau}\int_{\RR^d}n^j\nabla n\cdot\nabla c |\nabla c|^{2k-2j-2}\phi^R_{x_0}\dd x\\
:=&I_1+I_2+I_3+I_4+I_5+I_6.
\end{align*}
Because $$\nabla \Delta c\cdot\nabla c=\frac{1}{2}\Delta|\nabla c|^2 -|D^2 c|^2,$$ the term $I_5$ can be rewritten as follows.
\begin{align*}
I_5=&
\frac{(k-j)}{\tau}\int_{\RR^d}\Delta|\nabla c|^2n^j|\nabla c|^{2k-2j-2}\phi^R_{x_0}\dd x
-\frac{2(k-j)}{\tau}\int_{\RR^d}|D^2 c|^2 n^j|\nabla c|^{2k-2j-2}\phi^R_{x_0}\dd x\\
=
&-\frac{(k-j)j}{\tau}\int_{\RR^d}n^{j-1}\nabla n\cdot\nabla|\nabla c|^2|\nabla c|^{2k-2j-2}\phi^R_{x_0}\dd x\\
&-\frac{(k-j)}{\tau}\int_{\RR^d}n^j\nabla|\nabla c|^2\cdot\nabla \phi^R_{x_0}|\nabla c|^{2k-2j-2}\dd x\\
&-\frac{(k-j)(k-j-1)}{\tau}\int_{\RR^d}n^j|\nabla|\nabla c|^2|^2|\nabla c|^{2k-2j-4}\phi^R_{x_0}\dd x\\
&-\frac{2(k-j)}{\tau}\int_{\RR^d} n^j|D^2 c|^2|\nabla c|^{2k-2j-2}\phi^R_{x_0}\dd x,
\end{align*}
where by H\"{o}lder's inequality, we obtain
\begin{align*}
&-\frac{(k-j)j}{\tau}\int_{\RR^d}n^{j-1}\nabla n\cdot\nabla|\nabla c|^2|\nabla c|^{2k-2j-2}\phi^R_{x_0}\dd x\\
\le&\frac{j^2(k-j)^2}{2\tau^2}\int_{\RR^d}n^{j-1}|\nabla|\nabla c|^2|^2|\nabla c|^{2k-2j-2}\phi^R_{x_0}\dd x+\frac{1}{2}\int_{\RR^d}n^{j-1}|\nabla n|^2|\nabla c|^{2k-2j-2}\phi^R_{x_0}\dd x\\
=&\frac{j^2(k-j)^2}{2\tau^2}\int_{\RR^d}\left(n^j|\nabla c|^{2k-2j-4}\right)^{\frac{j-1}{j}}\left(|\nabla c|^{2k-4}\right)^{\frac{1}{j}}|\nabla |\nabla c|^2|^2\phi^R_{x_0}\dd x\\
&+\frac{1}{2}\int_{\RR^d}n^{j-1}|\nabla n|^2|\nabla c|^{2k-2j-2}\phi^R_{x_0}\dd x\\
\le&\frac{1}{2}\int_{\RR^d}n^{j-1}|\nabla n|^2|\nabla c|^{2k-2j-2}\phi^R_{x_0}\dd x+\frac{k-j}{8\tau}\int_{\RR^d}n^j|\nabla c|^{2k-2j-4}|\nabla |\nabla c|^2|^2\phi^R_{x_0}\dd x\\
&+\frac{8^{j-1}j^{2j}(k-j)^{j+1}}{\tau^{j+1}}\int_{\RR^d}|\nabla c|^{2k-4}|\nabla |\nabla c|^2|^2\phi^R_{x_0}\dd x.
\end{align*}
Noting the fact that
\begin{equation}\label{D^2}
|\nabla|\nabla c|^2|^2=|2D^2c\cdot\nabla c|^2\le 4|D^2 c|^2|\nabla c|^2,
\end{equation}
hence, from the above inequality we get that
\begin{align*}
&-\frac{(k-j)j}{\tau}\int_{\RR^d}n^{j-1}\nabla n\cdot\nabla|\nabla c|^2|\nabla c|^{2k-2j-2}\phi^R_{x_0}\dd x\\
\le&\frac{1}{2}\int_{\RR^d}n^{j-1}|\nabla n|^2|\nabla c|^{2k-2j-2}\phi^R_{x_0,}\dd x+\frac{k-j}{2\tau}\int_{\RR^d} n^j|D^2 c|^2|\nabla c|^{2k-2j-2}\phi^R_{x_0}\dd x\\
&+\frac{8^{j-1}j^{2j}(k-j)^{j+1}}{\tau^{j+1}}\int_{\RR^d}|\nabla c|^{2k-4}|\nabla |\nabla c|^2|^2\phi^R_{x_0}\dd x.
\end{align*}
By H\"{o}lder's inequality and Young's inequality, it is easy to check that
\begin{align*}
&-\frac{k-j}{\tau}\int_{\RR^d}n^j\nabla|\nabla c|^2\cdot\nabla \phi^R_{x_0}|\nabla c|^{2k-2j-2}\dd x\\
\le&\frac{k-j}{4\tau}\int_{\RR^d}n^j|\nabla|\nabla c|^2|^2|\nabla c|^{2k-2j-4}\phi^R_{x_0}\dd x+\frac{k-j}{\tau}\int_{\RR^d}n^j|\nabla c|^{2k-2j}(\phi^R_{x_0})^{-1}|\nabla \phi^R_{x_0}|^2\dd x\\
\le&\frac{k-j}{\tau}\int_{\RR^d}n^j|D^2c|^2|\nabla c|^{2k-2j-2}\phi^R_{x_0}\dd x+\frac{k-j}{\tau}\int_{\RR^d}n^j|\nabla c|^{2k-2j}\phi^{-1}_{x_0,R}|\nabla \phi^R_{x_0}|^2\dd x.
\end{align*}
To sum up, $I_5$ can be bounded as
\begin{equation}\label{I5}
\begin{aligned}
I_5\le &
-\frac{(k-j)(k-j-1)}{\tau}\int_{\RR^d}n^j|\nabla|\nabla c|^2|^2|\nabla c|^{2k-2j-4}\phi^R_{x_0}\dd x\\
&-\frac{k-j}{2\tau}\int_{\RR^d} n^j|D^2 c|^2|\nabla c|^{2k-2j-2}\phi^R_{x_0}\dd x+\frac{1}{2}\int_{\RR^d}n^{j-1}|\nabla n|^2|\nabla c|^{2k-2j-2}\phi^R_{x_0}\dd x\\
&+\frac{8^{j-1}j^{2j}(k-j)^{j+1}}{\tau^{j+1}}\int_{\RR^d}|\nabla c|^{2k-4}|\nabla |\nabla c|^2|^2\phi^R_{x_0}\dd x\\&+\frac{k-j}{\tau}\int_{\RR^d}n^j|\nabla c|^{2k-2j}(\phi^R_{x_0})^{-1}|\nabla \phi^R_{x_0}|^2\dd x.
\end{aligned}
\end{equation}
For $I_6$, it is easy to check that
\begin{equation}\label{I6}
\begin{aligned}
I_6\le& \frac{1}{4}\int_{\RR^d}n^{j-1}|\nabla n|^2|\nabla c|^{2k-2j-2}\phi^R_{x_0}\dd x+\frac{4(k-j)^2}{\tau^2}\int_{\RR^d}n^{j+1}|\nabla c|^{2k-2j}\phi^R_{x_0}\dd x.
\end{aligned}
\end{equation}
Using integration by parts, from $I_1$ we infer that
\begin{equation}\nonumber
\begin{aligned}
I_1=&-j(j-1)\int_{\RR^d}n^{j-2}|\nabla n|^2|\nabla c|^{2k-2j}\phi^R_{x_0}\dd x-j\int_{\RR^d}n^{j-1}\nabla n\cdot\nabla\phi^R_{x_0} |\nabla c|^{2k-2j}\dd x\\
&-j(j-k)\int_{\RR^d}n^{j-1}\nabla n\cdot\nabla|\nabla c|^2|\nabla c|^{2k-2j-2}\phi^R_{x_0}\dd x
:=I_{1,1}+I_{1,2}+I_{1,3}.
\end{aligned}
\end{equation}
In terms of $I_{1,2}$, using H\"{o}lder's inequality we obtain that
\begin{align*}
I_{1,2}\le \frac{1}{8}\int_{\RR^d}n^{j-1}|\nabla n|^2|\nabla c|^{2k-2j-2}\phi^R_{x_0}\dd x+2j^2\int_{\RR^d}n^{j-1}|\nabla c|^{2k-2j+2}(\phi^R_{x_0})^{-1}|\nabla \phi^R_{x_0}|^2\dd x.
\end{align*}
Similarly, $I_{1,3}$ can be bounded as
\begin{align*}
I_{1,3}\le& 2j^2(j-k)^2\int_{\RR^d}n^{j-1}|\nabla|\nabla c|^2|^2|\nabla c|^{2k-2j+2}\phi^{R}_{x_0}\dd x \\&+\frac{1}{8}\int_{\RR^d}n^{j-1}|\nabla n|^2|\nabla c|^{2k-2j-2}\phi^{R}_{x_0}\dd x\\
=&2j^2(j-k)^2\int_{\RR^d}\left(n^j|\nabla c|^{2k-2j-4}\right)^{\frac{j-1}{j}}\left(|\nabla c|^{2k-4}\right)^{\frac{1}{j}}|\nabla |\nabla c|^2|^2\phi^{R}_{x_0}\dd x\\
&+\frac{1}{8}\int_{\RR^d}n^{j-1}|\nabla n|^2|\nabla c|^{2k-2j-2}\phi^{R}_{x_0}\dd x\\
\le&\frac{1}{8}\int_{\RR^d}n^{j-1}|\nabla n|^2|\nabla c|^{2k-2j-2}\phi^{R}_{x_0}\dd x
+\frac{k-j}{16 \tau}\int_{\RR^d}n^j|\nabla c|^{2k-2j-4}|\nabla |\nabla c|^2|^2\phi^{R}_{x_0}\dd x\\
&+\frac{(32\tau j^2)^j(k-j)^{j+1}}{16\tau}\int_{\RR^d}|\nabla c|^{2k-4}|\nabla |\nabla c|^2|^2\phi^{R}_{x_0}\dd x
\end{align*}
Taking advantage of \eqref{D^2}, we can get that
\begin{align*}
I_{1,3}\le&\frac{1}{8}\int_{\RR^d}n^{j-1}|\nabla n|^2|\nabla c|^{2k-2j-2}\phi^{R}_{x_0}\dd x
+\frac{k-j}{4\tau}\int_{\RR^d}n^j|\nabla c|^{2k-2j-2}|D^2 c|^2\phi^{R}_{x_0}\dd x\\
&+\frac{(32\tau j^2)^j(k-j)^{j+1}}{16\tau}\int_{\RR^d}|\nabla c|^{2k-4}|\nabla |\nabla c|^2|^2\phi^{R}_{x_0}\dd x.
\end{align*}
Therefore, $I_1$ can be bounded by
\begin{equation}\label{I1}
\begin{aligned}
I_1\le& -j(j-1)\int_{\RR^d}n^{j-2}|\nabla n|^2|\nabla c|^{2k-2j}\phi^{R}_{x_0}\dd x+\frac{1}{4}\int_{\RR^d}n^{j-1}|\nabla n|^2|\nabla c|^{2k-2j-2}\phi^{R}_{x_0}\dd x\\
&+2j^2\int_{\RR^d}n^{j-1}|\nabla c|^{2k-2j+2}(\phi^{R}_{x_0})^{-1}|\nabla \phi^{R}_{x_0}|^2\dd x+\frac{k-j}{4\tau}\int_{\RR^d}n^j|\nabla c|^{2k-2j-2}|D^2 c|^2\phi^{R}_{x_0}\dd x\\
&+\frac{(32\tau j^2)^j(k-j)^{j+1}}{16\tau}\int_{\RR^d}|\nabla c|^{2k-4}|\nabla |\nabla c|^2|^2\phi^{R}_{x_0}\dd x.
\end{aligned}
\end{equation}
By integration by parts, $I_2$ can be decomposed as the following three parts:
\begin{align*}
I_2=&\chi j\int_{\RR^d}n^j\nabla c\cdot\nabla |\nabla c|^{2k-2j} \phi^{R}_{x_0}\dd x+\chi j\int_{\RR^d}n\nabla c\cdot\nabla n^{j-1}|\nabla c|^{2k-2j}\phi^{R}_{x_0}\dd x\\
&+\chi j\int_{\RR^d}n^j\nabla c\cdot\nabla\phi^{R}_{x_0}|\nabla c|^{2k-2j}:=I_{2,1}+I_{2,2}+I_{2,3}.
\end{align*}
For $I_{2,1}$, by Young's inequality and \eqref{D^2}, we have \begin{align*}
I_{2,1}\le& \frac{k-j}{32\tau}\int_{\RR^d} n^j|\nabla|\nabla c|^2|^2|\nabla c|^{2k-2j-4}\phi^R_{x_0}\dd x+ 8\tau\chi^2j^2(k-j)\int_{\RR^d} n^j|\nabla c|^{2k-2j+2}\phi^R_{x_0}\dd x\\
\le&\frac{k-j}{8\tau}\int_{\RR^d} n^j|D^2 c|^2|\nabla c|^{2k-2j-2}\phi^R_{x_0}\dd x
+ 8\tau\chi^2j^2(k-j)\int_{\RR^d} n^j|\nabla c|^{2k-2j+2}\phi^R_{x_0}\dd x.
\end{align*}
Similarly, $I_{2,2}$ can be bounded as
\begin{align*}
I_{2,2}
\le&\frac{j(j-1)}{2}\int_{\RR^d}n^{j-2}|\nabla n|^2|\nabla c|^{2k-2j}\phi^R_{x_0}\dd x+\frac{\chi^2j(j-1)}{2}\int_{\RR^d} n^j|\nabla c|^{2k-2j+2}\phi^R_{x_0}\dd x,
\end{align*}
and for $I_{2,3}$, we deduce that
\begin{align*}
I_{2,3}\le& \frac{\chi^2 j}{2}\int_{\RR^d} n^j|\nabla c|^{2k-2j+2}\phi^R_{x_0}\dd x+\frac{ j}{2}\int_{\RR^d} n^j|\nabla c|^{2k-2j}(\phi^R_{x_0, R})^{-1}|\nabla \phi^R_{x_0}|^2\dd x\\
\le&\frac{\chi^2 j}{2}\int_{\RR^d} n^j|\nabla c|^{2k-2j+2}\phi^R_{x_0}\dd x+\frac{ j^2}{2k}\int_{\RR^d} n^k(\phi^R_{x_0})^{-1}|\nabla \phi^R_{x_0}|^2\dd x\\
&+\frac{ j(k-j)}{2k}\int_{\RR^d} |\nabla c|^{2k}(\phi^R_{x_0})^{-1}|\nabla \phi^R_{x_0}|^2\dd x.
\end{align*}
To sum up, $I_2$ can be bounded by the following inequality.
\begin{align*}
I_2\le &\frac{k-j}{8\tau}\int_{\RR^d} n^j|D^2 c|^2|\nabla c|^{2k-2j-2}\phi^R_{x_0}\dd x+\frac{j(j-1)}{2}\int_{\RR^d}n^{j-2}|\nabla n|^2|\nabla c|^{2k-2j}\phi^R_{x_0}\dd x\\
&+ \chi^2\left(8\tau j^2(k-j)+\frac{j(j-1)}{2}+\frac{j}{2}\right)\int_{\RR^d} n^j|\nabla c|^{2k-2j+2}\phi^R_{x_0}\dd x\\
&+\frac{ j^2}{2k}\int_{\RR^d} n^k(\phi^R_{x_0})^{-1}|\nabla \phi^R_{x_0}|^2\dd x
+\frac{ j(k-j)}{2k}\int_{\RR^d} |\nabla c|^{2k}(\phi^R_{x_0})^{-1}|\nabla \phi^R_{x_0}|^2\dd x.
\end{align*}
By H\"{o}lder's and Young's inequalities, we obtain that
\begin{align*}
&\int_{\RR^d} n^j|\nabla c|^{2k-2j+2}\phi^R_{x_0}\dd x\\
=&\int_{\RR^d}(n^2|\nabla c|^{2k-2})^{\frac{1}{j-1}}(n^{j+1}|\nabla c|^{2k-2j})^{\frac{j-2}{j-1}}\phi^R_{x_0}\dd x\\
\le&\frac{1}{j-1}\int_{\RR^d}n^2|\nabla c|^{2k-2}\phi^R_{x_0}\dd x
+\frac{j-2}{j-1}\int_{\RR^d}n^{j+1}|\nabla c|^{2k-2j}\phi^R_{x_0}\dd x.
\end{align*}
Therefore, we conclude that
\begin{equation}\label{I2re}
\begin{aligned}
I_2\le &\frac{k-j}{8\tau}\int_{\RR^d} n^j|D^2 c|^2|\nabla c|^{2k-2j-2}\phi^R_{x_0}\dd x+\frac{j(j-1)}{2}\int_{\RR^d}n^{j-2}|\nabla n|^2|\nabla c|^{2k-2j}\phi^R_{x_0}\dd x\\
&+\chi^2j\left(8\tau j(k-j)+\frac{j}{2}\right)\int_{\RR^d}n^2|\nabla c|^{2k-2}\phi^R_{x_0}\dd x
+\frac{ j^2}{2k}\int_{\RR^d} n^k\left(\phi^R_{x_0}\right)^{-1}|\nabla \phi^R_{x_0}|^2\dd x\\
&+\chi^2j\left(8\tau j(k-j)+\frac{j}{2}\right)\int_{\RR^d}n^{j+1}|\nabla c|^{2k-2j}\phi^R_{x_0}\dd x\\
&
+\frac{ j(k-j)}{2k}\int_{\RR^d} |\nabla c|^{2k}(\phi^R_{x_0})^{-1}|\nabla \phi^R_{x_0}|^2\dd x.
\end{aligned}
\end{equation}
To sum up, combining with inequalities \eqref{I5}--\eqref{I2re} we can get that
\begin{align*}
&\frac{\dd}{\dd t}\int_{\RR^d}n^j|\nabla c|^{2k-2j}\phi^R_{x_0}\dd x+\frac{2(k-j)}{\tau}\int_{\RR^d}n^j|\nabla c|^{2k-2j}\phi^R_{x_0}\dd x\\
&+\frac{k-j}{8\tau}\int_{\RR^d} n^j|D^2 c|^2|\nabla c|^{2k-2j-2}\phi^R_{x_0}\dd x+\frac{j(j-1)}{2}\int_{\RR^d}n^{j-2}|\nabla n|^2|\nabla c|^{2k-2j}\phi^R_{x_0}\dd x\\
&+\frac{(k-j)(k-j-1)}{\tau}\int_{\RR^d}n^j|\nabla|\nabla c|^2|^2|\nabla c|^{2k-2j-4}\phi^R_{x_0}\dd x\\
\le&\int_{\RR^d}n^{j-1}|\nabla n|^2|\nabla c|^{2k-2j-2}\phi^R_{x_0}\dd x\\
&+(k-j)^{j+1}\left(\frac{2^{2j-3}j^{2j}}{\tau^{j+1}}+\frac{(32\tau j^2)^j}{16\tau}\right)\int_{\RR^d}|\nabla c|^{2k-4}|\nabla |\nabla c|^2|^2\phi^R_{x_0}\dd x\\
&+\frac{k-j}{\tau}\int_{\RR^d}n^j|\nabla c|^{2k-2j}\phi^{-1}_{x_0,R}|\nabla \phi^R_{x_0}|^2\dd x+2j^2\int_{\RR^d}n^{j-1}|\nabla c|^{2k-2j+2}\phi^{-1}_{x_0,R}|\nabla \phi^R_{x_0}|^2\dd x\\
&+\left(\chi^2j\left(8\tau j(k-j)+\frac{j}{2}\right)+\frac{4(k-j)^2}{\tau^2}-\mu j\right)\int_{\RR^d}n^{j+1}|\nabla c|^{2k-2j}\phi^R_{x_0}\dd x\\
&+\chi^2j\left(8\tau j(k-j)+\frac{j}{2}\right)\int_{\RR^d}n^2|\nabla c|^{2k-2}\phi^R_{x_0}\dd x+\frac{ j^2}{2k}\int_{\RR^d} n^k(\phi^R_{x_0})^{-1}|\nabla \phi^R_{x_0}|^2\dd x\\
&+\frac{ j(k-j)}{2k}\int_{\RR^d} |\nabla c|^{2k}(\phi^R_{x_0})^{-1}|\nabla \phi^R_{x_0}|^2\dd x+\lambda j\int_{\RR^d}n^j|\nabla c|^{2k-2j}\phi^R_{x_0}\dd x.
\end{align*}
Taking advantage of H\"{o}lder's and Young's inequalities, we get that
\begin{align*}
&\int_{\RR^d}n^j|\nabla c|^{2k-2j}(\phi^R_{x_0})^{-1}|\nabla \phi^R_{x_0}|^2\dd x\\
\le&\int_{\RR^d}n^k(\phi^R_{x_0})^{-1}|\nabla \phi^R_{x_0}|^2\dd x+
\int_{\RR^d}|\nabla c|^{2k}(\phi^R_{x_0})^{-1}|\nabla \phi^R_{x_0}|^2\dd x\\
\le&\frac{C3^d}{R^2}\|n\|^k_{L^k_{\uloc}(\RR^d)}+\frac{C3^d}{R^2}\|\nabla c\|^{2k}_{L^{2k}_{\uloc}(\RR^d)},
\end{align*}
\begin{align*}
\int_{\RR^d}n^{j-1}|\nabla c|^{2k-2j+2}(\phi^R_{x_0})^{-1}|\nabla \phi^R_{x_0}|^2\dd x
\le&\frac{C3^d}{R^2}\|n\|^k_{L^k_{\uloc}(\RR^d)}+\frac{C3^d}{R^2}\|\nabla c\|^{2k}_{L^{2k}_{\uloc}(\RR^d)},
\end{align*}
and
\begin{align*}
\int_{\RR^d}n^j|\nabla c|^{2k-2j}\phi^R_{x_0}\dd x\le&\int_{\RR^d}n^{j+1}|\nabla c|^{2k-2j}\phi^R_{x_0}\dd x+\int_{\RR^d}|\nabla c|^{2k-2j}\phi^R_{x_0}\dd x\\
\le&\int_{\RR^d}n^{j+1}|\nabla c|^{2k-2j}\phi^R_{x_0}\dd x+\int_{\RR^d}|\nabla c|^{2k-2}\phi^R_{x_0}\dd x+CR^{\frac{d(k-1)}{j-1}}.
\end{align*}
Hence, we can further obtain that
\begin{align*}
&\frac{\dd}{\dd t}\int_{\RR^d}n^j|\nabla c|^{2k-2j}\phi_{x_0,R}\dd x+\frac{j(j-1)}{2}\int_{\RR^d}n^{j-2}|\nabla n|^2|\nabla c|^{2k-2j}\phi_{x_0,R}\dd x\\
\le&\int_{\RR^d}n^{j-1}|\nabla n|^2|\nabla c|^{2k-2j-2}\phi^R_{x_0}\dd x\\&+(k-j)^{j+1}\left(\frac{2^{2j-3}j^{2j}}{\tau^{j+1}}+\frac{(32\tau j^2)^j}{16\tau}\right)\int_{\RR^d}|\nabla c|^{2k-4}|\nabla |\nabla c|^2|^2\phi^R_{x_0}\dd x\\
&+\left(\chi^2j\left(8\tau j(k-j)+\frac{j}{2}\right)+\frac{4(k-j)^2}{\tau^2}+\lambda j-\mu j\right)\int_{\RR^d}n^{j+1}|\nabla c|^{2k-2j}\phi_{x_0,R}\dd x\\
&+C\lambda jR^{\frac{d(k-1)}{j-1}}+\chi^2j\left(8\tau j(k-j)+\frac{j}{2}\right)\int_{\RR^d}n^2|\nabla c|^{2k-2}\phi^R_{x_0}\dd x+\lambda j\int_{\RR^d}|\nabla c|^{2k-2}\phi^R_{x_0}\dd x\\
&+\frac{C3^d}{R^2}\left(\frac{j^2}{2k}+\frac{k-j}{\tau }+2j^2\right)\left(\|n\|^k_{L^k_\text{\uloc}(\RR^d)}+\|\nabla c\|^{2k}_{L^{2k}_\text{\uloc}(\RR^d)}\right).
\end{align*}
This implies the desired result.
\end{proof}
\begin{proposition}\label{sumj}Assume $\tau, \chi,\mu>0$, $\lambda\ge 0$ and $R\ge1$. Let $k\in\NN$ with $k\ge 3$. Then there exist constant $b_1,\cdots, b_k$, $\mu_0=\mu_0(\tau,  \chi, \lambda, k)$ and an absolute constant $C$ such that, for $\mu\ge\mu_0$, the solution $(n, c)$ of equations \eqref{che-M}-\eqref{che-I-M} fulfills
\begin{align*}
&\frac{\dd}{\dd t}\Big(\int_{\RR^d}|\nabla c|^{2k}\phi^R_{x_0}\dd x+\sum_{j=1}^k b_j\int_{\RR^d}n^j|\nabla c|^{2k-2j}\phi^R_{x_0}\dd x\Big)\\
&+\frac{k(k-1)}{16\tau}\int_{\RR^d}|\nabla c|^{2k-4}|\nabla|\nabla c|^2|^2\phi^R_{x_0}\dd x+\frac{b_k}{8}\int_{\RR^d}n^{k-2}|\nabla n|^2\phi^R_{x_0}\dd x\\
\le&\frac{C(\lambda+1)}{\tau}\int_{\RR^d}|\nabla c|^{2k-2}\phi^R_{x_0}\dd x
+\left(\frac{C3^d}{\tau^2 R^{2}}+\frac{k(k-1)}{4\tau R}+\frac{C3^d}{\tau R^2}\right)\|n\|^k_{L^k_\text{\uloc}(\RR^d)}\\
&+\left(\frac{C3^d k}{\tau R^2}+\frac{C3^d(1+\frac{1}{\tau})}{ \tau R^{2}}+\frac{k(k-1)}{4\tau R}+\frac{C3^d}{\tau R^{2k}}\right)\|\nabla c\|^{2k}_{L^{2k}_{\uloc}(\RR^d)}+\frac{CR^d}{\tau}+C(\lambda+1) R^d.
\end{align*}
\end{proposition}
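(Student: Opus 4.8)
The plan is to assemble a single master differential inequality by adding inequality \eqref{nablac2k} for $\frac{\dd}{\dd t}\int_{\RR^d}|\nabla c|^{2k}\phi^R_{x_0}\dd x$ to the inequalities of Propositions \ref{n1c2k-2}, \ref{njc2k-2j} and \ref{lemmank}, each multiplied by a positive constant: $b_1$ on Proposition \ref{n1c2k-2} (the level $j=1$ quantity $\int n|\nabla c|^{2k-2}\phi^R_{x_0}$), $b_j$ on Proposition \ref{njc2k-2j} for $2\le j\le k-1$, and $b_k$ on \eqref{nlock} (the level $j=k$ quantity $\int n^k\phi^R_{x_0}$). The organizing principle is that every ``bad'' term produced on the right-hand side of one of these inequalities coincides with a genuine dissipation (``good'') term on the left-hand side of another, so that a hierarchical choice of the $b_j$ and of $\mu_0$ turns the sum into the closed estimate claimed.

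I would group the bad terms into three families and treat them in a fixed order. The first and structurally central family is the mixed gradient terms: Proposition \ref{njc2k-2j} (and, at $j=1$, Proposition \ref{n1c2k-2}) carries on its right-hand side the term $\int_{\RR^d}n^{j-1}|\nabla n|^2|\nabla c|^{2k-2j-2}\phi^R_{x_0}\dd x$, which is precisely the dissipation term $\frac{(j+1)j}{4}\int_{\RR^d}n^{(j+1)-2}|\nabla n|^2|\nabla c|^{2k-2(j+1)}\phi^R_{x_0}\dd x$ on the left at the next level $j+1$ (and at $j=k-1$ it is absorbed by the top term $\frac{k(k-1)}{4}\int_{\RR^d}n^{k-2}|\nabla n|^2\phi^R_{x_0}\dd x$ from \eqref{nlock}). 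These cancel telescopically as long as $b_{j+1}\cdot\frac{(j+1)j}{4}\ge b_j$ for $1\le j\le k-1$; choosing the $b_j$ recursively with a little slack leaves a fixed fraction $\frac{b_k}{8}\int_{\RR^d}n^{k-2}|\nabla n|^2\phi^R_{x_0}\dd x$ of the top dissipation on the left, and fixes every ratio $b_j/b_1$ as an explicit constant.

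The second family is the terms $C_j\int_{\RR^d}|\nabla c|^{2k-4}|\nabla|\nabla c|^2|^2\phi^R_{x_0}\dd x$ coming from Propositions \ref{n1c2k-2} and \ref{njc2k-2j}; the only dissipation of this exact monomial is the base term $\frac{k(k-1)}{4\tau}\int_{\RR^d}|\nabla|\nabla c|^2|^2|\nabla c|^{2k-4}\phi^R_{x_0}\dd x$ from \eqref{nablac2k}. Since the ratios $b_j/b_1$ are already frozen, $\sum_{j=1}^{k-1}b_jC_j$ is a fixed multiple of $b_1$, so after choosing $b_1$ small enough that $\sum_{j=1}^{k-1}b_jC_j\le\frac{3k(k-1)}{16\tau}$ this family is absorbed, leaving the fraction $\frac{k(k-1)}{16\tau}$ recorded on the left. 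The third family is the reaction terms $n^2|\nabla c|^{2k-2}$, which appear with positive coefficient in \eqref{nablac2k}, in \eqref{nlock}, and in every Proposition \ref{njc2k-2j}, but which occur at level $j=1$ as the damping term $(C_1-\mu)\int_{\RR^d}n^2|\nabla c|^{2k-2}\phi^R_{x_0}\dd x$; with $b_1$ now fixed I would pick $\mu_0$ so large that for $\mu\ge\mu_0$ the negative quantity $b_1(C_1-\mu)$ dominates the sum of all positive $n^2|\nabla c|^{2k-2}$ coefficients. This is exactly the step that forces the largeness of $\mu$. The remaining highest-power reaction terms $n^{j+1}|\nabla c|^{2k-2j}$ for $j\ge2$ carry coefficients $b_j(C_j-\mu j)$ and are simply discarded once $\mu\ge\mu_0$ makes them nonpositive.

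Finally I would collect the surviving contributions: the chemical terms $\lambda j\int_{\RR^d}|\nabla c|^{2k-2}\phi^R_{x_0}\dd x$ and $\frac{\lambda}{2}\int_{\RR^d}|\nabla c|^{2k-2}\phi^R_{x_0}\dd x$ combine into $\frac{C(\lambda+1)}{\tau}\int_{\RR^d}|\nabla c|^{2k-2}\phi^R_{x_0}\dd x$; the cut-off error terms, controlled through the derivative bounds $|\nabla\phi^R_{x_0}|\le C/R$ and $|D^2\phi^R_{x_0}|\le C/R^2$, combine into the stated multiples of $\|n\|^k_{L^k_{\uloc}(\RR^d)}$ and $\|\nabla c\|^{2k}_{L^{2k}_{\uloc}(\RR^d)}$; the pure-volume terms give $\frac{CR^d}{\tau}+C(\lambda+1)R^d$; and all unneeded positive dissipation (the $|D^2 c|^2$ terms, the $\frac{2k}{\tau}\int_{\RR^d}|\nabla c|^{2k}\phi^R_{x_0}\dd x$ term, and the intermediate gradient terms already consumed by the telescoping) is dropped from the left. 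The main obstacle is not analytic but organizational: the three selections must be carried out in the correct order — the ratios $b_j/b_1$ from the telescoping, then $b_1$ small for the $|\nabla|\nabla c|^2|^2$ family, then $\mu_0$ large for the reaction family — and one must verify that these do not conflict, which they do not, since each later choice only tightens a parameter already frozen at an earlier stage.
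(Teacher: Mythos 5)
Your proposal is correct and follows essentially the same route as the paper: the paper likewise forms the linear combination $\eqref{nablac2k}+\sum_j b_j\cdot(\text{level-}j\text{ inequality})$, enforces the telescoping condition $\sum_{j=2}^k\bigl(b_{j-1}-\tfrac{j(j-1)b_j}{4}\bigr)+\tfrac{b_k}{8}<0$ on the mixed gradient terms, makes $\sum_{j=1}^{k-1}b_jC_j$ small relative to $\tfrac{k(k-1)}{4\tau}$ to absorb the $|\nabla|\nabla c|^2|^2|\nabla c|^{2k-4}$ family, and then takes $\mu_0$ large so that $b_1(C_1-\mu_0)$ dominates the positive $n^2|\nabla c|^{2k-2}$ coefficients and $C_j-\mu_0 j\le 0$ kills the higher reaction terms. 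The only cosmetic difference is that the paper realizes your sequential selection by one explicit formula $b_j=\tfrac{k^{2-5k}(k-1)}{16\tau C_0}k^{2j}$ (with $C_j\le C_0k^{3k+1}$), which satisfies both the ratio and the smallness constraints at once.
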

\begin{proof}
By performing Proposition \ref{lemmank}-Proposition \ref{njc2k-2j}, it is easy to check by an easy calculation  that
\begin{align*}
&\frac{\dd}{\dd t}\Bigg(\int_{\RR^d}|\nabla c|^{2k}\phi^R_{x_0}\dd x+\sum_{j=1}^k b_j\int_{\RR^d}n^j|\nabla c|^{2k-2j}\phi^R_{x_0}\dd x\Bigg)\\
&+\frac{k(k-1)}{16\tau}\int_{\RR^d}|\nabla c|^{2k-4}|\nabla|\nabla c|^2|^2\phi^R_{x_0}\dd x+\frac{b_k}{8}\int_{\RR^d}n^{k-2}|\nabla n|^2\phi^R_{x_0}\dd x\\
\le&-\frac{k(k-1)}{4\tau}\int_{\RR^d}|\nabla |\nabla c|^2|^2|\nabla c|^{2k-4}\phi^R_{x_0}\dd x
+\frac{(d+1)k}{\tau}\int_{\RR^d} n^2|\nabla c|^{2k-2}\phi^R_{x_0}\dd x\\&+\frac{C3^dk}{\tau R^2}\|\nabla c\|^{2k}_{L^{2k}_{\uloc}(\RR^d)}+b_1C_1\int_{\RR^d}|\nabla|\nabla c|^2|^2|\nabla c|^{2k-4}\phi^R_{x_0}\dd x
+\frac{\lambda b_1}{2}\int_{\RR^d}|\nabla c|^{2k-2}\phi^R_{x_0}\dd x\\
&+\frac{Cb_13^d(1+\frac{1}{\tau})}{ R^{2}}\|\nabla c\|^{2k}_{L^{2k}_{\uloc}(\RR^d)}+\frac{Cb_13^d}{\tau R^{2}}\|n\|^{k}_{L^{k}_{\uloc}(\RR^d)}+(C_1-\mu)b_1
\int_{\RR^d}n^2|\nabla c|^{2k-2}\phi^R_{x_0}\dd x\\
&+b_1\int_{\RR^d}|\nabla n|^2|\nabla c|^{2k-4}\phi^R_{x_0}\dd x+\sum_{j=2}^{k-1}b_j\Bigg(-\frac{j(j-1)}{4}\int_{\RR^d}n^{j-2}|\nabla n|^2|\nabla c|^{2k-2j}\phi^R_{x_0}\dd x\\
&+\int_{\RR^d}n^{j-1}|\nabla n|^2|\nabla c|^{2k-2j-2}\phi^R_{x_0}\dd x+C_j\int_{\RR^d}|\nabla c|^{2k-4}|\nabla |\nabla c|^2|^2\phi^R_{x_0}\dd x\\
&+(C_j-\mu j)\int_{\RR^d}n^{j+1}|\nabla c|^{2k-2j}\phi^R_{x_0}\dd x+CR^dj+C_j\int_{\RR^d}n^2|\nabla c|^{2k-2}\phi^R_{x_0}\dd x\\
&+\frac{C_j}{R}\|n\|^k_{L^k_\text{\uloc}(\RR^d)}
+\lambda j\int_{\RR^d}|\nabla c|^{2k-2}\phi^R_{x_0}\dd x+\frac{C_j}{R}\|\nabla c\|^{2k}_{L^{2k}_{\uloc}(\RR^d)}\Bigg)\\
&+b_k\Bigg(-\frac{k(k-1)}{4}\int_{\RR^d}|\nabla n|^2 n^{k-2}\phi^R_{x_0}\dd x+\frac{C3^dk}{2(k-1)R^2}\|n\|^k_{L^k_{\uloc}(\RR^d)}+\frac{C3^dk}{R^{2k}}\|\nabla c\|^{2k}_{L^{2k}_{\uloc}(\RR^d)}\\
&+k\int_{\RR^d}n^{2}|\nabla c|^{2k-2}\phi^R_{x_0}\dd x+(C_k-\mu k)\int_{\RR^d} n^{k+1}\phi^R_{x_0}\dd x+C(\lambda+1) R^dk\Bigg)\\
&+\frac{k(k-1)}{16\tau}\int_{\RR^d}|\nabla c|^{2k-4}|\nabla|\nabla c|^2|^2\phi^R_{x_0}\dd x+\frac{b_k}{8}\int_{\RR^d}n^{k-2}|\nabla n|^2\phi^R_{x_0}\dd x.
\end{align*}
By rearranging the above inequality, we can obtain that
\begin{align*}
&\frac{\dd}{\dd t}\Big(\int_{\RR^d}|\nabla c|^{2k}\phi^R_{x_0}\dd x+\sum_{j=1}^k b_j\int_{\RR^d}n^j|\nabla c|^{2k-2j}\phi^R_{x_0}\dd x\Big)\\
\le&\left(-\frac{k(k-1)}{4\tau}+\sum_{j=1}^{k-1}b_jC_j+\frac{k(k-1)}{16\tau}\right)\int_{\RR^d}|\nabla c|^{2k-4}|\nabla |\nabla c|^2|^2\phi^R_{x_0}\dd x\\
&+\left(\frac{(d+1)k}{\tau}+(C_1-\mu)b_1+\sum_{j=2}^{k-1}b_j C_j+kb_k\right)\int_{\RR^d} n^2|\nabla c|^{2k-2}\phi^R_{x_0}\dd x\\
&+\left(\sum_{j=2}^k\left(b_{j-1}-\frac{j(j-1)b_j}{4}\right)+\frac{b_k}{8}\right)\int_{\RR^d}n^{j-2}|\nabla n|^2|\nabla c|^{2k-2j}\phi^R_{x_0}\dd x\\
&+\left(\frac{b_1\lambda}{2}+\lambda\sum_{j=2}^{k-1}jb_j\right)\int_{\RR^d}|\nabla c|^{2k-2}\phi^R_{x_0}\dd x
\\&+\left(\frac{Cb_13^d}{\tau R^{2}}+\sum_{j=2}^{k-1}\frac{b_jC_j}{R^2}+\frac{C3^dkb_k}{2(k-1)R^2}\right)\|n\|^k_{L^k_\text{uloc}(\RR^d)}\\
&+\left(\frac{C3^dk}{\tau R^2}+\frac{Cb_13^d(1+\frac{1}{\tau})}{ R^{2}}+\sum_{j=2}^{k-1}\frac{b_jC_j}{R^2}+\frac{C3^dkb_k}{R^{2k}}\right)\|\nabla c\|^{2k}_{L^{2k}_{\uloc}(\RR^d)}\\
&+\sum_{j=2}^kb_j(C_j-\mu j)\int_{\RR^d}n^{j+1}|\nabla c|^{2k-2j}\phi^R_{x_0}\dd x+CR^d\sum_{j=2}^{k-1}jb_j+C(\lambda+1) R^dkb_k.
\end{align*}
It is easy to check that, for any fixed $\tau$, $\lambda$ and $\chi$, there exists a positive constant $C_0=C_0(\tau, \lambda, \chi)$ such that $$C_j\le C_0k^{3k+1},\quad(j=1,\cdots, k-1).$$
 Then $\forall\, j=1,2,\cdots, k$, we set $$b_j= \frac{k^{2-5k}(k-1)}{16\tau C_0}k^{2j}.$$ By this definition, it yields that
\begin{equation}\label{sumbc}
\sum_{j=1}^{k-1}b_jC_j\le \frac{k^{3-2k}(k-1)}{16\tau}\sum_{j=1}^{k-1}k^{2j}<\frac{k(k-1)}{8\tau}
\end{equation}
and
\begin{align*}
\frac{b_{j-1}}{b_j}=k^{-2},\qquad j=2,\cdots,k.
\end{align*}
Therefore, it yields that
\begin{equation}\label{con1}
-\frac{k(k-1)}{4\tau}+\sum_{j=1}^{k-1}b_jC_j+\frac{k(k-1)}{16\tau}<0
\end{equation}
and
\begin{equation}\label{con2}
\sum_{j=2}^k\left(b_{j-1}-\frac{j(j-1)b_j}{4}\right)+\frac{b_k}{8}\le b_k\sum_{j=2}^k\left(k^{-2}-\frac{j(j-1)}{4}+\frac{1}{8}\right)<0.
\end{equation}
We choose some $\mu_0\ge C_0k^{3k+1}\ge C_j$ such that
\begin{equation}\label{con3}
C_j-\mu_0j\le 0,\qquad j=2,3,\cdots,k
\end{equation}
and by \eqref{sumbc},
\begin{equation}\label{con4}
\begin{split}
&\frac{(d+1)k}{\tau}+(C_1-\mu_0)b_1+\sum_{j=2}^{k-1}b_j C_j+kb_k\\<&\frac{(d+1)k}{\tau}+\frac{k(k-1)}{8\tau}+\frac{1}{16\tau C_0}-\mu_0 b_1<0,
\end{split}
\end{equation}
then taking $\mu\ge \mu_0$, from conditions \eqref{con1}--\eqref{con4} we infer that
\begin{align*}
&\frac{\dd}{\dd t}\Big(\int_{\RR^d}|\nabla c|^{2k}\phi^R_{x_0}\dd x+\sum_{j=1}^k b_j\int_{\RR^d}n^j|\nabla c|^{2k-2j}\phi^R_{x_0}\dd x\Big)\\
\le&\left(\frac{\lambda b_1}{2}+\frac{\lambda}{8\tau C_0}\right)\int_{\RR^d}|\nabla c|^{2k-2}\phi^R_{x_0}\dd x
+\left(\frac{C3^d}{\tau^2 R^{2}}+\frac{k(k-1)}{8\tau R^2}+\frac{C3^d}{\tau C_0R^2}\right)\|n\|^k_{L^k_\text{\uloc}(\RR^d)}\\
&+\left(\frac{C3^d k}{\tau R^2}+\frac{C3^d(1+\frac{1}{\tau})}{ \tau R^{2}}+\frac{k(k-1)}{8\tau R^2}+\frac{C3^d}{\tau C_0 R^{2k}}\right)\|\nabla c\|^{2k}_{L^{2k}_{\uloc}(\RR^d)}\\&+\frac{CR^d}{\tau C_0}+C(\lambda+1) R^d.
\end{align*}
This implies the required estimate.
\end{proof}
Before establishing the uniformly local $L^p(\RR^d)$ estimate for $n$, we firstly present a lemma in terms of the generalized interpolation inequality needed in the sequel.
\begin{lemma}\label{Poincare} Assume $u\in H^{1}(\RR^d)$  satisfying 
$\displaystyle\int_{\RR^d}|u|^{\frac{2}{k}}(x)\dd x<\infty $ for $k\geq2.$
Then there exists a constant $C$, depending on $d$ such that
\begin{align*}
\|u\|^2_{L^2(\RR^d)}\le C\|\nabla u\|^2_{L^2(\RR^d)}+C^k\Big(\int_{\RR^d}|u|^{\frac{2}{k}}\dd x\Big)^k.
\end{align*}
\end{lemma}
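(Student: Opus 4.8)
The plan is to realize this bound as a Gagliardo--Nirenberg interpolation closed off by Young's inequality, the whole point being to keep explicit track of the dependence on $k$ so that the remainder carries only the claimed factor $C^k$. The cleanest way to guarantee a transparent $k$-dependence is to assemble the interpolation from two ingredients whose constants I can control: a \emph{homogeneous} Sobolev embedding, whose constant is purely dimensional, and the elementary convexity of Lebesgue norms, whose constant is exactly $1$. A direct citation of a general Gagliardo--Nirenberg inequality would instead hide the constant inside an opaque factor that could a priori grow uncontrollably in $k$, which is precisely what must be avoided here.

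Concretely, for $d\ge 3$ I would begin from $\|u\|_{L^{2^*}(\RR^d)}\le S_d\|\nabla u\|_{L^2(\RR^d)}$ with $2^*=\tfrac{2d}{d-2}$ and $S_d=S_d(d)$. Since the exponent $2$ lies between $\tfrac{2}{k}$ and $2^*$, H\"{o}lder's inequality (log-convexity of $L^p$-norms, with constant $1$) yields $\|u\|_{L^2}\le\|u\|_{L^{2/k}}^{1-\theta}\,\|u\|_{L^{2^*}}^{\theta}$ for the unique $\theta\in(0,1)$ determined by $\tfrac12=\tfrac{(1-\theta)k}{2}+\tfrac{\theta(d-2)}{2d}$, that is $\theta=\tfrac{(k-1)d}{(k-1)d+2}$. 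Chaining the two estimates gives $\|u\|_{L^2}\le S_d^{\theta}\,\|\nabla u\|_{L^2}^{\theta}\,\|u\|_{L^{2/k}}^{1-\theta}$, and since $\theta\in(0,1)$ one has $S_d^{\theta}\le\max\{1,S_d\}$, a bound free of $k$. Squaring and abbreviating $A=\|\nabla u\|_{L^2}^2$ and $B=\|u\|_{L^{2/k}}^2=\bigl(\int_{\RR^d}|u|^{2/k}\bigr)^{k}$ produces $\|u\|_{L^2}^2\le\max\{1,S_d\}^2\,A^{\theta}B^{1-\theta}$.

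It then remains to split $A^{\theta}B^{1-\theta}$ by the weighted Young inequality $A^{\theta}B^{1-\theta}\le\theta\,\eta\,A+(1-\theta)\,\eta^{-\theta/(1-\theta)}B$. Keeping the coefficient of the dissipative term $A$ equal to a prescribed constant forces a particular choice of $\eta$, and the coefficient of $B$ then equals a dimensional constant raised to the power $\tfrac{1}{1-\theta}=\tfrac{(k-1)d}{2}+1$, which is linear in $k$; this is exactly the mechanism that generates the factor $C^k$. Substituting $B=\bigl(\int_{\RR^d}|u|^{2/k}\bigr)^{k}$ gives the asserted inequality. I note that if one does not insist on a small coefficient in front of $\|\nabla u\|_{L^2}^2$, the plain weighted arithmetic--geometric mean inequality already yields both coefficients bounded by $\max\{1,S_d\}^2$, so the stated $C^k$ is in any case a safe upper bound.

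The step demanding the most care is the bookkeeping of the $k$-dependence just described, and routing through the constant-$1$ Lebesgue interpolation is what renders it harmless. Two minor points remain. First, because $\tfrac{2}{k}\le 1$, one must check that the H\"{o}lder exponents implicit in $\|u\|_{L^2}\le\|u\|_{L^{2/k}}^{1-\theta}\|u\|_{L^{2^*}}^{\theta}$ are admissible; this reduces to $\theta\ge\tfrac{k-1}{k}$, which holds precisely when $d\ge 2$. Second, in the low dimensions $d=1,2$ the homogeneous embedding into $L^{2^*}$ is unavailable, so I would instead fix a finite exponent $\sigma>2$ admissible in the scaling-invariant inequality $\|u\|_{L^2}\le C\,\|\nabla u\|_{L^2}^{\theta}\|u\|_{L^{2/k}}^{1-\theta}$ (the exponent balance has already been verified to yield $\theta\in(0,1)$ for every $d\ge 1$) and run the identical Young argument, with $C$ again depending only on $d$.
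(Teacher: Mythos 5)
Your argument is correct, but it is organized differently from the paper's. The paper first invokes the $k$-independent Nash--Gagliardo--Nirenberg bound $\|u\|_{L^2(\RR^d)}^2\le C\|\nabla u\|_{L^2(\RR^d)}^2+C\|u\|_{L^1(\RR^d)}^2$, valid for every $d\ge1$, and only then brings in $k$: writing $|u|=|u|^{1/(k-1)}|u|^{(k-2)/(k-1)}$ and applying H\"{o}lder gives $\|u\|_{L^1}^2\le\big(\int|u|^{2/k}\dd x\big)^{k/(k-1)}\big(\int|u|^{2}\dd x\big)^{(k-2)/(k-1)}$, after which Young's inequality with exponents $k-1$ and $\frac{k-1}{k-2}$ produces $\frac12\|u\|_{L^2}^2+C^k\big(\int|u|^{2/k}\dd x\big)^k$ and the half is absorbed into the left-hand side; the factor $C^k$ comes entirely from that Young exponent. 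You instead interpolate $\|u\|_{L^2}$ directly between $L^{2/k}$ and $L^{2^*}$, push the Sobolev constant through the $k$-dependent exponent $\theta$ via $S_d^{\theta}\le\max\{1,S_d\}$, and close with plain AM--GM, so no absorption is needed and the coefficient of $\|\nabla u\|_{L^2}^2$ is a clean dimensional constant. The price is the case split: for $d=1,2$ the embedding into $L^{2^*}$ is unavailable and your substitute (pivoting through a fixed $L^{\sigma}$, $\sigma>2$, then absorbing a power of $\|u\|_{L^2}$) is only sketched, though it does work because the resulting exponent on the constant stays bounded as $k\to\infty$; the paper's pivot through $L^1$ avoids any dimension distinction. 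One small remark: your admissibility check $\theta\ge\frac{k-1}{k}$ is automatic, since the two H\"{o}lder weights $k(1-\theta)$ and $2\theta/2^*$ are nonnegative and sum to $1$ by the scaling relation defining $\theta$.
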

\begin{proof}By the interpolation theorem and the Young inequality, it yields that
\begin{equation}\label{Poin}
\begin{aligned}
\|u\|^2_{L^2(\RR^d)}\le& C\|\nabla u\|^2_{L^2(\RR^d)}+C\|u\|^2_{L^1(\RR^d)}.
\end{aligned}
\end{equation}
By H\"{o}lder's inequality, it yields that
\begin{align*}
C\left(\int_{\RR^d}|u| \dd x\right)^2=&C\left(\int_{\RR^d}|u|^{\frac{1}{k-1}}|u|^{\frac{k-2}{k-1}} \dd x\right)^2\\
\le &C\left(\int_{\RR^d}|u|^{\frac{2}{k}}\dd x\right)^{\frac{2k}{2k-2}}\left(\int_{\RR^d}|u|^2\dd x\right)^{\frac{k-2}{k-1}}\\
\le&\frac{1}{2}\int_{\RR^d}|u|^{2}\dd x+C^k\left(\int_{{\RR^d}}|u|^{\frac{2}{k}}\dd x\right)^{k}.
\end{align*}
Substituting the above estimate into inequality \eqref{Poin}, we conclude the desired result.
\end{proof}
\begin{proposition}\label{nkc2k}Let $\tau, \,\chi>0$ and $\lambda\ge 0$. Assume $k\in\NN$ with $k\ge 3$ and $\mu\ge\mu_0(\tau,\,\chi,\, k,\, \lambda)$ given in Proposition \ref{sumj}. Then there exist some $R>1$ and $C(\tau,\chi, k, \lambda, d,  \mu)$ such that the solution of problem \eqref{che-M}-\eqref{che-I-M} satisfies, for all $t\in (0, T_{\max})$,
\begin{align*}
&\|n\|^{k}_{L^\infty_t L^{k}_{\text{\uloc}}(\RR^d)}+\|\nabla c\|^{2k}_{L^\infty_tL^{2k}_{\text{\uloc}}(\RR^d)}\\\le &C(\lambda, \tau, \mu, \chi, d)\left(1+\|n_0\|^2_{L^1_{\text{\uloc}}(\RR^d)}+\|\nabla c_0\|^2_{L^2_{\text{\uloc}}(\RR^d)}+\|\nabla c_0\|^{2k}_{L^{2k}_{\text{\uloc}}(\RR^d)}+\|n_0\|^{k}_{L^{k}_{\text{\uloc}}(\RR^d)}\right).
\end{align*}
\end{proposition}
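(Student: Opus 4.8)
The plan is to upgrade the dissipation furnished by Proposition \ref{sumj} into a genuine \emph{damping} of the localized functional
\[F_{x_0}(t):=\int_{\RR^d}|\nabla c|^{2k}\phi^R_{x_0}\dd x+\sum_{j=1}^k b_j\int_{\RR^d}n^j|\nabla c|^{2k-2j}\phi^R_{x_0}\dd x,\]
and then to close a self-referential Gr\"onwall estimate after taking the supremum over $x_0$. Since $n\ge0$ and, by Young's inequality, each cross term obeys $n^j|\nabla c|^{2k-2j}\le\frac{j}{k}n^k+\frac{k-j}{k}|\nabla c|^{2k}$, the functional $F_{x_0}$ is comparable to $\int_{\RR^d}(|\nabla c|^{2k}+n^k)\phi^R_{x_0}\dd x$; in particular $F_{x_0}\ge\int_{B_R(x_0)}(|\nabla c|^{2k}+b_k n^k)\dd x$ because $\phi^R_{x_0}\ge1$ on $B_R(x_0)$ and every summand is nonnegative. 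Hence $G(t):=\sup_{x_0}F_{x_0}(t)$ controls the target quantity $Y(t):=\|n(t)\|^k_{L^k_{\uloc}(\RR^d)}+\|\nabla c(t)\|^{2k}_{L^{2k}_{\uloc}(\RR^d)}$ up to a fixed multiple, $Y\le C\,G$.

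The heart of the argument is a localized use of Lemma \ref{Poincare}. I would apply it to $u=\sqrt{\phi^R_{x_0}}\,|\nabla c|^k$ and to $u=\sqrt{\phi^R_{x_0}}\,n^{k/2}$, observing that $|u|^{2/k}$ then equals $(\phi^R_{x_0})^{1/k}|\nabla c|^2$, respectively $(\phi^R_{x_0})^{1/k}n$, whose integrals are dominated by $\|\nabla c\|^2_{L^2_{\uloc}(\RR^d)}$ and $\|n\|_{L^1_{\uloc}(\RR^d)}$ --- both bounded \emph{uniformly in time} by Proposition \ref{n1c2}. Because $\int\phi^R_{x_0}|\nabla(|\nabla c|^k)|^2\dd x$ and $\int\phi^R_{x_0}|\nabla(n^{k/2})|^2\dd x$ are, up to the explicit factors $\tfrac{4k\tau}{k-1}$ and $\tfrac{4}{k^2}$, exactly the two dissipation integrals on the left of Proposition \ref{sumj} (plus cutoff-gradient remainders of size $\tfrac{C}{R^2}\|\nabla c\|^{2k}_{L^{2k}_{\uloc}}$, resp. $\tfrac{C}{R^2}\|n\|^k_{L^k_{\uloc}}$, arising from $|\nabla\sqrt{\phi^R_{x_0}}|\le C/R$), Lemma \ref{Poincare} yields
\[\int_{\RR^d}|\nabla c|^{2k}\phi^R_{x_0}\dd x+\int_{\RR^d}n^k\phi^R_{x_0}\dd x\le C\,\mathcal D_{x_0}+\frac{C}{R^2}\,Y+C\Big(\|\nabla c\|^{2k}_{L^2_{\uloc}}+\|n\|^k_{L^1_{\uloc}}\Big),\]
where $\mathcal D_{x_0}$ is the sum of the two dissipation terms. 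Combined with the comparability of the previous paragraph, this rearranges to $\mathcal D_{x_0}\ge\delta F_{x_0}-\tfrac{C}{R^2}Y-C_\ast$ for an $R$-independent rate $\delta>0$ and a constant $C_\ast$ that is bounded via Proposition \ref{n1c2}; this is precisely the damping I want.

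Inserting this lower bound into Proposition \ref{sumj} and disposing of the zeroth-order term by Young's inequality, $\frac{C(\lambda+1)}{\tau}\int|\nabla c|^{2k-2}\phi^R_{x_0}\le\eta F_{x_0}+C_\eta R^d$ with $\eta$ chosen so small that it is swallowed by $\delta F_{x_0}$, produces, for every $x_0$,
\[\frac{\dd}{\dd t}F_{x_0}+\frac{\delta}{2}F_{x_0}\le\kappa(R)\,Y(t)+K,\]
where $\kappa(R)$ gathers $\tfrac{C}{R^2}$ together with the coefficients of $\|n\|^k_{L^k_{\uloc}}$ and $\|\nabla c\|^{2k}_{L^{2k}_{\uloc}}$ coming from Proposition \ref{sumj} --- all tending to $0$ as $R\to\infty$ --- and $K=K(\lambda,\tau,\mu,\chi,d,R)$ is a fixed constant absorbing the $R^d$-terms and the uniform low-order bounds. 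I would fix $R$ so large that $\kappa(R)\,C\le\delta/4$, with $C$ the constant in $Y\le C\,G$, then run Gr\"onwall's inequality, $F_{x_0}(t)\le e^{-\delta t/2}F_{x_0}(0)+\int_0^t e^{-\delta(t-s)/2}\big(\kappa Y(s)+K\big)\dd s$, take the supremum over $x_0$, and use $Y\le C\,G$ together with the a priori finiteness of $G^\ast:=\sup_{s\le T}G(s)$ to get $G^\ast\le 2G(0)+\tfrac{4K}{\delta}$ after absorbing $\tfrac12G^\ast$. Since $G(0)\le C(\|\nabla c_0\|^{2k}_{L^{2k}_{\uloc}}+\|n_0\|^k_{L^k_{\uloc}})$, $K$ depends on the data only through $\|n_0\|_{L^1_{\uloc}}$ and $\|\nabla c_0\|^2_{L^2_{\uloc}}$, and $Y\le C\,G^\ast$, this gives the asserted bound.

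I expect the main obstacle to be the localized Poincar\'e step: one must check that the cutoff-gradient cross terms generated by differentiating $\sqrt{\phi^R_{x_0}}$ reproduce exactly the $\tfrac{C}{R^2}\|\cdot\|_{\uloc}$ structure (so that they can later be made small), and, crucially, that the damping rate $\delta$ stays \emph{independent of} $R$. It is this separation between the $R$-independent gain $\delta F_{x_0}$ and the $R$-vanishing feedback $\kappa(R)Y$ that makes the self-referential Gr\"onwall closeable. A secondary subtlety is the order of quantifiers --- the estimate must be established for each fixed $x_0$ before the supremum is taken --- and the fact that the bootstrap requires $G^\ast<\infty$ a priori on $[0,T]$, which is exactly why one works with the compactly supported regularized data $(n_M,c_M)$, whose fast decay at infinity makes every $F_{x_0}(t)$, and their supremum, finite.
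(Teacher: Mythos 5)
Your proposal is correct and follows essentially the same route as the paper: the paper likewise applies Lemma \ref{Poincare} to $n^{k/2}(\phi^R_{x_0})^{1/2}$ and $|\nabla c|^{k}(\phi^R_{x_0})^{1/2}$ to convert the two dissipation integrals of Proposition \ref{sumj} into a damping of the same functional $y(t)=F_{x_0}(t)$, controls the low-order remainders by Proposition \ref{n1c2}, and closes the self-referential estimate by choosing $R$ large so that the $\tfrac{C}{R^2}\big(\|n\|^k_{L^k_{\uloc}}+\|\nabla c\|^{2k}_{L^{2k}_{\uloc}}\big)$ feedback is absorbed after taking the supremum over $x_0$ and $t$. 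The only cosmetic difference is that the paper invokes the comparison ``$y'+cy\le C$ implies $y\le\max\{y(0),C/c\}$'' where you run an explicit exponential Gr\"onwall bound and a bootstrap on $G^{\ast}$; the two are equivalent here.
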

\begin{proof}
By virtue of Lemma \ref{Poincare}, we obtain that
\begin{equation*}
\begin{aligned}
&\int_{\RR^d}n^k\phi^R_{x_0}\dd x=\left\|n^{\frac{k}{2}}\left(\phi^R_{x_0}\right)^{\frac{1}{2}}\right\|^2_{L^2(\RR^d)}\\
\le& C\int_{\RR^d}\left|\nabla\left(n^{\frac{k}{2}}(\phi^R_{x_0})^{\frac{1}{2}}\right)\right|^2\dd x+C^k \left(\int_{B_{2R}(x_0)}n (\phi^R_{x_0})^{\frac{1}{k}}\dd x\right)^2\\
\le&Ck^2\int_{\RR^d}n^{k-2}|\nabla n|^2\phi^R_{x_0}\dd x+ C\int_{\RR^d}n^k |\nabla(\phi^R_{x_0})^{\frac{1}{2}}|^2\dd x+C^k 3^d\|n\|^2_{L^1_{\text{\uloc}}(\RR^d)}\\
\le&Ck^2\int_{\RR^d}n^{k-2}|\nabla n|^2\phi^R_{x_0}\dd x+ \frac{C}{R^2}\|n\|^k_{L^k_{\text{\uloc}}(\RR^d)}+C^k 3^d\|n\|^2_{L^1_{\text{\uloc}}(\RR^d)}.
\end{aligned}
\end{equation*}
Similarly, it is easy to check that
\begin{equation*}\nonumber
\begin{aligned}
&\int_{\RR^d}|\nabla c|^{2k}\phi^R_{x_0}\dd x\\\le&Ck^2\int_{\RR^d}|\nabla c|^{2k-4}|\nabla |\nabla c|^2|^2\phi^R_{x_0}\dd x+ \frac{C}{R^2}\|\nabla c\|^{2k}_{L^{2k}_{\text{\uloc}}(\RR^d)}+C^k 3^d\|\nabla c\|^2_{L^2_{\text{\uloc}}(\RR^d)}.
\end{aligned}
\end{equation*}
The above two inequalities imply that
\begin{equation*}\label{poincare}
\begin{aligned}
&\frac{k(k-1)}{16\tau}\int_{\RR^d}n^{k-2}|\nabla n|^2\phi^R_{x_0}\dd x+\frac{b_k}{8}\int_{\RR^d}|\nabla c|^{2k-4}|\nabla |\nabla c|^2|^2\phi^R_{x_0}\dd x\\
\ge&\frac{k(k-1)}{16\tau}\Big(\frac{C}{k^2}\int_{\RR^d}n^k\phi^R_{x_0}\dd x-\frac{C}{k^2R^2}\|n\|^k_{L^k_{\uloc}}
-\frac{C^{k}3^d}{k^2}\|n\|^2_{L^1_{\text{\uloc}}}\Big)\\
&+\frac{b_k}{8}\left(\frac{C}{k^2}\int_{\RR^d}|\nabla c|^{2k}\phi^R_{x_0}\dd x-\frac{C}{k^2R^2}\|\nabla c\|^{2k}_{L^{2k}_{\uloc}(\RR^d)}
-\frac{C^{k}3^d}{k^2}\|\nabla c\|^2_{L^2_{\text{\uloc}}(\RR^d)}\right).
\end{aligned}
\end{equation*}
We set
\[y(t):=\int_{\RR^d}|\nabla c|^{2k}\phi^R_{x_0}\dd x+\sum_{j=1}^k b_j\int_{\RR^d}n^j|\nabla c|^{2k-2j}\phi^R_{x_0}\dd x.\]
Taking advantage of Lemma \ref{sumj} and \eqref{poincare}, there exists an constant $C_1$ only depending on $d$ such that
\begin{align*}
&y'(t)+\frac{C_1k(k-1)}{16\tau k^2 }\int_{\RR^d}n^k\phi^R_{x_0}\dd x+\frac{C_1b_k}{ k^2}\int_{\RR^d}|\nabla c|^{2k}\phi^R_{x_0}\dd x\\
\le &\frac{3^dC^{k}k(k-1)}{\tau k^2}\|n\|^2_{L^1_{\text{\uloc}}(\RR^d)}
+\frac{C^kb_k3^d}{ k^2 }\|\nabla c\|^2_{L^2_{\text{\uloc}}(\RR^d)}+\frac{C(\lambda+1)}{\tau}\int_{\RR^d}|\nabla c|^{2k-2}\phi^R_{x_0}\dd x\\
&+\left(\frac{Ck(k-1)}{\tau k^2R^2}+\frac{C3^d}{\tau^2 R^{2}}+\frac{k(k-1)}{4\tau R^2}+\frac{C3^d}{\tau R^2}\right)\|n\|^k_{L^k_\text{\uloc}(\RR^d)}\\
&+\left(\frac{Cb_k}{\tau k^2 R^2}+\frac{C3^d k}{\tau R^2}+\frac{C3^d(1+\frac{1}{\tau})}{ \tau R^{2}}+\frac{k(k-1)}{4\tau R^2}+\frac{C3^d}{\tau R^{2k}}\right)\|\nabla c\|^{2k}_{L^{2k}_{\uloc}(\RR^d)}\\&+\frac{CR^d}{\tau}+C(\lambda+1) R^d.
\end{align*}
By H\"older's inequality, we have
\[\frac{C(\lambda+1)}{\tau}\int_{\RR^d}|\nabla c|^{2k-2}\phi^R_{x_0}\dd x\le\frac{C_1b_k}{2 k^2}\int_{\RR^d}|\nabla c|^{2k}\phi^R_{x_0}\dd x+C(\lambda, \tau, k, \chi)R^{d}.\]
For any fixed $k$, taking
$$c_0=\min\left\{\frac{C_1k(k-1)}{16\tau k^2 }, \frac{C_1b_k}{ k^2}\right\},$$
combining with the fact that there exists a constant $C_2=C_2(\chi,\lambda, k,\tau)$ such that
\begin{align*}
y(t)\le{C_2} \left(\int_{\RR^d}n^k\phi^R_{x_0}\dd x+\int_{\RR^d}|\nabla c|^{2k}\phi^R_{x_0}\dd x\right),
\end{align*}
we can get that
\begin{align*}
y'(t)\le& -\frac{c_0}{C_2} y(t)+\frac{3^dC^{k}k(k-1)}{\tau k^2}\|n\|^2_{L^1_{\text{uloc}}(\RR^d)}
+\frac{C^kb_k3^d}{ k^2 }\|\nabla c\|^2_{L^2_{\text{\uloc}}(\RR^d)}\\
&+\left(\frac{Ck(k-1)}{\tau k^2R^2}+\frac{C3^d}{\tau^2 R^{2}}+\frac{k(k-1)}{4\tau R^2}+\frac{C3^d}{\tau R^2}\right)\|n\|^k_{L^k_\text{\uloc}(\RR^d)}\\
&+\left(\frac{Cb_k}{\tau k^2 R^2}+\frac{C3^d k}{\tau R^2}+\frac{C3^d(1+\frac{1}{\tau})}{ \tau R^{2}}+\frac{k(k-1)}{4\tau R^2}+\frac{C3^d}{\tau R^{2k}}\right)\|\nabla c\|^{2k}_{L^{2k}_{\uloc}(\RR^d)}\\&+C(\lambda, \tau, k, \chi)R^{d}.
\end{align*}
Using Lemma \ref{n1c2} and the above inequality, we can infer that, for $R\ge R_0$,
\begin{align*}
y(t)\le \max\Bigg\{y(0),\,\,&
\frac{\widetilde{C}(d, \tau, k,\lambda, \chi)}{ R^2}\left(\|n\|^k_{L^k_\text{\uloc}(\RR^d)} +\|\nabla c\|^{2k}_{L^{2k}_\text{\uloc}(\RR^d)}\right)\\&+C(\lambda, \tau, \mu, \chi, d, R,k)\left(1+\|n_0\|^2_{L^1_{\text{\uloc}}(\RR^d)}+\|\nabla c_0\|^2_{L^2_{\text{\uloc}}(\RR^d)}\right)\Bigg\}.
\end{align*}
Now we choose some $R\ge R_0$ ($R_0$ is defined in Lemma \ref{n1c2}) satisfying that
$$\frac{\widetilde{C}(d, \tau, k,\lambda, \chi)}{ R^2}\le \min\left\{\frac{1}{2},\,\, \frac{b_k}{2}\right\}.$$
Following choice of $R$, one can check that $R$ depends on $d, \tau, k,\lambda, \chi$. Therefore, we have
\begin{align*}
&\frac{1}{2}\|\nabla c(t)\|^{2k}_{L^{2k}_{\text{\uloc}}(\RR^d)}+\frac{b_k}{2}\|n(t)\|^{k}_{L^{k}_{\text{\uloc}}(\RR^d)}\\
\le&y(0)+C(\lambda, \tau, \mu, \chi, d, R)\left(1+\|n_0\|^2_{L^1_{\text{\uloc}}(\RR^d)}+\|\nabla c_0\|^2_{L^2_{\text{\uloc}}(\RR^d)}\right)\\
\le&C(\lambda, \tau, \mu, \chi, d, k)\left(1+\|n_0\|^2_{L^1_{\text{\uloc}}(\RR^d)}+\|\nabla c_0\|^2_{L^2_{\text{\uloc}}(\RR^d)}+\|\nabla c_0\|^{2k}_{L^{2k}_{\text{\uloc}}(\RR^d)}+\|n_0\|^{k}_{L^{k}_{\text{\uloc}}(\RR^d)}\right),
\end{align*}
and therefore we complete the proof.
\end{proof}
\begin{proposition}\label{prop-bound}
Let $\tau, \,\chi>0$ and $\lambda\ge 0$. Assume $k\in\NN$ with $k\ge 3$ and $\mu\ge\mu_0(\tau,\,\chi,\, k,\, \lambda)$ given in Proposition \ref{sumj}. Then there exist some $R>1$ and $C(\tau,\chi, k, \lambda, d,  \mu)$ such that the solution of problem \eqref{che-M}-\eqref{che-I-M} satisfies
$$\|n(t)\|_{L^\infty(\RR^d)}+\|  c(t)\|_{W^{1,\infty}(\RR^d)}\le C \qquad \forall\, t\in(0, T_{\max}).$$
\end{proposition}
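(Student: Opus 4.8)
The plan is to upgrade the uniformly local bounds produced by Proposition \ref{nkc2k} to genuine $L^\infty(\RR^d)$ bounds by means of the Duhamel (mild) formulation together with smoothing estimates for the heat semigroup on uniformly local spaces, in the same spirit as Step~1 of Theorem \ref{thm-local}. First I would fix once and for all an integer $k$ with $k\ge 3$ \emph{and} $k>d$, and take $\mu\ge\mu_0(\tau,\chi,\lambda,k)$; Proposition \ref{nkc2k} then furnishes constants, uniform in $t\in(0,T_{\max})$, controlling $\|n\|_{L^\infty_tL^k_{\uloc}(\RR^d)}$ and $\|\nabla c\|_{L^\infty_tL^{2k}_{\uloc}(\RR^d)}$. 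The crucial point is that these bounds do not deteriorate as $t\nearrow T_{\max}$, so every constant produced below will be independent of $t$.

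The key structural observation is that both evolution equations carry a zeroth-order absorption: the $c$-equation already contains the term $-c$, and the $n$-equation may be rewritten as $\partial_t n-\Delta n+n=-\chi\nabla\cdot(n\nabla c)+(\lambda+1)n-\mu n^2$. Consequently the relevant Duhamel integrals are built from the semigroups $e^{\frac{t}{\tau}(\Delta-1)}$ and $e^{t(\Delta-1)}$, whose kernels carry an exponential factor $e^{-(t-s)}$. This is decisive and is precisely the point where the whole-space setting differs from a bounded domain: on uniformly local spaces the heat flow does \emph{not} decay as $t\to\infty$, so the naive $L^k_{\uloc}\to L^\infty$ smoothing would only yield bounds growing linearly in $t$. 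The exponential weights coming from the absorption terms are exactly what renders the time integrals convergent \emph{uniformly} in $t$, and I expect this to be the main obstacle to watch throughout.

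Concretely, I would first treat $c$. Writing $\nabla c(t)=\nabla e^{\frac{t}{\tau}(\Delta-1)}c_0+\frac1\tau\int_0^t\nabla e^{\frac{t-s}{\tau}(\Delta-1)}n(s)\dd s$ and invoking the uniformly local smoothing estimate $\|\nabla e^{\sigma(\Delta-1)}f\|_{L^\infty(\RR^d)}\le Ce^{-\sigma}\big(1+\sigma^{-\frac12-\frac{d}{2k}}\big)\|f\|_{L^k_{\uloc}(\RR^d)}$ (obtained from its whole-space counterpart through the covering comparison between $\|\cdot\|_{p,1}$ and $\|\cdot\|_{p,R}$ used throughout), the choice $k>d$ makes $\sigma^{-\frac12-\frac{d}{2k}}$ integrable near $0$, while $e^{-\sigma}$ tames the tail; hence $\|\nabla c(t)\|_{L^\infty(\RR^d)}\le C\big(\|\nabla c_0\|_{L^\infty}+\|n\|_{L^\infty_tL^k_{\uloc}}\big)\le C$. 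The same computation with the milder exponent $-\frac{d}{2k}$ bounds $\|c(t)\|_{L^\infty(\RR^d)}$, so that $\|c(t)\|_{W^{1,\infty}(\RR^d)}\le C$ uniformly in $t$.

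It remains to bound $n$, which is the most delicate step. Using the shifted Duhamel formula for $n$ with semigroup $e^{t(\Delta-1)}$, I would estimate the three contributions separately. The chemotaxis term is handled by writing $e^{(t-s)(\Delta-1)}\nabla\cdot(n\nabla c)=\nabla\cdot e^{(t-s)(\Delta-1)}(n\nabla c)$ and applying the gradient smoothing to $n\nabla c\in L^k_{\uloc}(\RR^d)$, which is legitimate since $\|n\nabla c\|_{L^k_{\uloc}}\le\|\nabla c\|_{L^\infty}\|n\|_{L^k_{\uloc}}\le C$ by the previous paragraph; the resulting integral $\int_0^t e^{-(t-s)}(t-s)^{-\frac12-\frac{d}{2k}}\dd s$ is bounded uniformly in $t$ because $k>d$. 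The logistic contribution is controlled by noting $(\lambda+1)n-\mu n^2\in L^{k/2}_{\uloc}(\RR^d)$ with norm $\le C\big(\|n\|_{L^{k/2}_{\uloc}}+\|n\|_{L^k_{\uloc}}^2\big)\le C$, so that smoothing from $L^{k/2}_{\uloc}$ to $L^\infty$ costs the exponent $-\frac{d}{k}$, again integrable against $e^{-(t-s)}$ since $k>d$. Together with $\|e^{t(\Delta-1)}n_0\|_{L^\infty}\le\|n_0\|_{L^\infty}$, these estimates yield $\|n(t)\|_{L^\infty(\RR^d)}\le C$ uniformly in $t\in(0,T_{\max})$, completing the claim; combined with the blow-up criterion \eqref{blowup} this will force $T_{\max}=\infty$ in the proof of Theorem \ref{result}.
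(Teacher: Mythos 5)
Your proposal is correct and follows the same overall strategy as the paper --- write $n$ and $\nabla c$ in Duhamel form, feed in the uniform-in-time bound on $\|n\|_{L^k_{\uloc}}$ and $\|\nabla c\|_{L^{2k}_{\uloc}}$ from Proposition \ref{nkc2k}, and use $L^k_{\uloc}\to L^\infty$ smoothing with $k>d$ so that the time integrals converge --- but the technical implementation differs. The paper does not invoke a pointwise-in-time semigroup estimate of the form $\|\nabla e^{\sigma(\Delta-1)}f\|_{L^\infty}\le Ce^{-\sigma}\bigl(1+\sigma^{-\frac12-\frac{d}{2k}}\bigr)\|f\|_{L^k_{\uloc}}$; instead it performs a Littlewood--Paley decomposition $\nabla c=\dot S_0\nabla c+\sum_{j\ge0}\dot\Delta_j\nabla c$ and proves a generalized Young inequality (Lemma \ref{GYong}) giving $\|\dot\Delta_j f\|_{L^\infty}\le C2^{jd/p}\|f\|_{p,1}$, then sums the dyadic pieces against the decay $e^{-c(t-s)2^{2j}}$, with the condition $k>d$ appearing as convergence of $\sum_j 2^{j(-1+d/k)}$ rather than as integrability of $\sigma^{-\frac12-\frac{d}{2k}}$. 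These are two faces of the same estimate; your version is arguably more direct, while the paper's dyadic version localizes in frequency and makes the large-time uniformity transparent blockwise. Two points deserve care in your write-up. First, your parenthetical claim that the uloc smoothing estimate follows ``from its whole-space counterpart through the covering comparison between $\|\cdot\|_{p,1}$ and $\|\cdot\|_{p,R}$'' is not an adequate derivation: the covering argument only compares uloc norms at different radii, and to pass from a local norm of the datum to an $L^\infty$ bound of the convolution you must exploit the spatial decay of the kernel to sum the contributions of distant unit balls (this is exactly what the annular decomposition in the paper's Lemma \ref{GYong} does); for the heat kernel the Gaussian decay makes this work, and you should say so explicitly, including the verification that the resulting constant is $O(1)$ (not decaying, but also not growing) as $\sigma\to\infty$, which is precisely where your exponential absorption factor becomes essential. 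Second, for $\|c\|_{L^\infty}$ you argue directly by Duhamel with the milder exponent $-\frac{d}{2k}$, which is simpler than the paper's route (an energy estimate for $\|c\|_{L^2_{\uloc}}$ followed by the interpolation $\|c\|_{L^\infty}\le C\|c\|_{L^2_{\uloc}}^{2/(d+2)}\|\nabla c\|_{L^\infty}^{d/(d+2)}$) and is perfectly legitimate. With the kernel-decay justification supplied, your argument closes.
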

To show this proposition, we need the following lemma which can viewed as the generalized Young inequality.
\begin{lemma}\label{GYong}
Let $\varphi\in\mathcal{D}(\RR^d)$ and $f\in L^p_{\uloc}(\RR^d)$ with $1\leq p<\infty.$
Then there exists a constant $C$ such that
\begin{equation}\label{eq-local-1}
 \|\varphi * f\|_{L^{\infty}\left(\mathbb{R}^{d}\right)} \leq C\|f\|_{1,1}.
\end{equation}
In particular, we have that for $j\geq0,$
\begin{equation}\label{GY-lmf}
\|\dot{\Delta}_j f\|_{L^{\infty}\left(\mathbb{R}^{d}\right)}+\|\dot{S}_j f\|_{L^{\infty}\left(\mathbb{R}^{d}\right)}  \leq C2^{\frac{d}{p}j}\|f\|_{p,1}.
\end{equation}
 Here  and what in follows, we denoted by $\dot{\Delta}_{j}$ the homogeneous dyadic blocks and $\dot{S}_j$ the homogeneous low-frequency cut-off function respectively, see for example \cite[Chapter 2]{BCD}.
\end{lemma}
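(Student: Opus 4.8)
The plan is to prove both estimates by a single elementary device — a covering of $\RR^d$ by unit balls with bounded overlap, followed by H\"older's inequality on each ball and the definition of $\|\cdot\|_{p,1}$ — exactly the mechanism already used in the paper through the $3^d$-ball covering of $B_{2R}(x_0)$. For \eqref{eq-local-1} I argue directly: since $\varphi\in\mathcal D(\RR^d)$ its support lies in some ball $B_N(0)$ with $N$ depending only on $\varphi$, so for each $x$ the integrand of $(\varphi*f)(x)=\int_{\RR^d}\varphi(x-y)f(y)\dd y$ is supported in $B_N(x)$. Covering $B_N(x)$ by a fixed number $M=M(N,d)$ of unit balls $B_1(x_i)$ and using $|\varphi|\le\|\varphi\|_{L^\infty}$ gives
\[|(\varphi*f)(x)|\le\|\varphi\|_{L^\infty(\RR^d)}\sum_{i=1}^M\int_{B_1(x_i)}|f(y)|\dd y\le M\|\varphi\|_{L^\infty(\RR^d)}\|f\|_{1,1},\]
which is \eqref{eq-local-1}; here $\|f\|_{1,1}<\infty$ since $L^p_{\uloc}(\RR^d)\hookrightarrow L^1_{\uloc}(\RR^d)$ by H\"older on a unit ball.

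For the second estimate I write each block as a convolution with a rescaled kernel: recalling from \cite[Chapter 2]{BCD} that $\dot\Delta_j f=h_j*f$ and $\dot S_j f=\widetilde h_j*f$ with $h_j(z)=2^{jd}h(2^jz)$, $\widetilde h_j(z)=2^{jd}\widetilde h(2^jz)$ and $h,\widetilde h\in\mathcal S(\RR^d)$ the inverse Fourier transforms of the Littlewood--Paley profile and of the low-frequency cut-off, it suffices to prove $\|g_j*f\|_{L^\infty}\le C2^{jd/p}\|f\|_{p,1}$ for $g_j\in\{h_j,\widetilde h_j\}$. Fixing $x\in\RR^d$ and covering $\RR^d$ by the unit balls $\{B_1(x+n)\}_{n\in\mathbb{Z}^d}$, H\"older's inequality with exponents $p$ and $p'=p/(p-1)$ on each ball (interpreting $p'=\infty$ when $p=1$), together with the change of variables $z=x-y$ and $\|f\|_{L^p(B_1(x+n))}\le\|f\|_{p,1}$, yields
\[|(g_j*f)(x)|\le\Big(\sum_{n\in\mathbb{Z}^d}\|g_j\|_{L^{p'}(B_1(n))}\Big)\|f\|_{p,1}.\]
Since the right-hand side is independent of $x$, everything reduces to bounding the amalgam-type quantity $\sum_{n\in\mathbb{Z}^d}\|g_j\|_{L^{p'}(B_1(n))}$ by $C2^{jd/p}$ uniformly in $j\ge0$.

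The heart of the matter, and the step I expect to be most delicate, is this last bound, which I would treat by a near/far decomposition that exploits both the scaling of $g_j$ and the Schwartz decay of $g\in\{h,\widetilde h\}$. For the finitely many lattice balls with $|n|\le2$ I use the crude estimate $\|g_j\|_{L^{p'}(B_1(n))}\le\|g_j\|_{L^{p'}(\RR^d)}=2^{jd/p}\|g\|_{L^{p'}(\RR^d)}$, which already contains the desired power $2^{jd/p}$. For the far balls $|n|\ge2$ I use that $y\in B_1(n)$ forces $|y|\ge|n|/2$, together with the rapid decay $|g_j(z)|\le C_M2^{jd}(1+2^j|z|)^{-M}$, to obtain $\|g_j\|_{L^{p'}(B_1(n))}\le C2^{jd}(2^{j-1}|n|)^{-M}$; since the lattice sum $\sum_{|n|\ge2}|n|^{-M}$ converges as soon as $M>d$, the far part contributes at most $C2^{j(d-M)}\le C$ for $j\ge0$. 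Summing the two parts gives $\sum_{n}\|g_j\|_{L^{p'}(B_1(n))}\le C2^{jd/p}$, hence \eqref{GY-lmf}.

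The only genuine subtleties are to fix the decay order $M>d$ \emph{before} summing, and to verify that all constants are independent of $x$ and of $j\ge0$; the former makes the lattice sum converge, while the latter holds because the lattice, its overlap constant, and the relevant Schwartz seminorms of $g$ are translation- and $j$-independent. The same computation applies verbatim to $\widetilde h_j$, giving the $\dot S_j$ contribution in \eqref{GY-lmf}, and one checks directly that for $p=1$ the argument degenerates into placing $\|g_j\|_{L^\infty}=2^{jd}\|g\|_{L^\infty}$ against $\|f\|_{L^1(B_1(\cdot))}$, consistent with the stated exponent $2^{jd/p}$.
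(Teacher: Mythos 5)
Your argument is correct in substance, but it follows a genuinely different route from the paper's, and the difference is worth spelling out. For \eqref{eq-local-1} the paper never uses compact support: it splits the convolution integral over $B_1(x_0)$ and the dyadic annuli $B_{2^{k+1}}(x_0)\setminus B_{2^{k}}(x_0)$, covers the $k$-th annulus by roughly $2^{kd}$ unit balls, and uses only the polynomial decay $|\varphi(x)|\lesssim|x|^{-2d}$; you instead exploit $\operatorname{supp}\varphi\subset B_N(0)$ and a single finite covering. For \eqref{GY-lmf} the paper reduces to the block $j=0$ by the scaling $f_j(x)=f(x/2^j)$, writes $\dot\Delta_jf(x)=\dot\Delta_0f_j(2^jx)$, applies \eqref{eq-local-1} to $\dot\Delta_0 f_j$, and then rescales the uniformly local norm via $\|f_j\|_{p,1}\le C2^{jd/p}\|f\|_{p,2^{-j}}\le C2^{jd/p}\|f\|_{p,1}$; you instead estimate every scale $j$ directly through the amalgam-type quantity $\sum_n\|g_j\|_{L^{p'}(B_1(n))}$ with a near/far splitting of the lattice. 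The two routes are not interchangeable: the paper's deduction of \eqref{GY-lmf} from \eqref{eq-local-1} is legitimate only because its proof of \eqref{eq-local-1} uses decay rather than support (the Littlewood--Paley kernels $h,\widetilde h$ are Schwartz functions, not elements of $\mathcal{D}(\RR^d)$), whereas your proof of \eqref{eq-local-1} genuinely needs compact support and could not feed into that scaling step --- but you never ask it to, since your proof of \eqref{GY-lmf} is self-contained. What the paper's route buys is one kernel lemma, robust under mere $|x|^{-2d}$ decay, that serves all blocks after rescaling; what your route buys is a scale-by-scale argument in which the dependence on $j$, the role of $p'$-integrability of the kernel, and the endpoint case $p=1$ are all completely explicit.

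One small point needs patching: the family $\{B_1(x+n)\}_{n\in\mathbb{Z}^d}$ of unit balls centered at integer lattice points does \emph{not} cover $\RR^d$ once $d\ge4$, because the deep holes of $\mathbb{Z}^d$ sit at distance $\sqrt{d}/2\ge1$ from the lattice. Replace $\mathbb{Z}^d$ by the finer lattice $\tfrac{1}{\sqrt{d}}\mathbb{Z}^d$ (covering radius $\tfrac12$), or decompose $\RR^d$ into unit cubes and cover each cube by $C(d)$ unit balls. This changes only $x$- and $j$-independent constants: the near region now contains $C(d)$ lattice points, and the far lattice sum $\sum|n|^{-M}$ over the dilated lattice still converges for $M>d$, so the rest of your computation, including the uniform bound $\sum_n\|g_j\|_{L^{p'}(B_1(n))}\le C2^{jd/p}$ for $j\ge0$, goes through verbatim.
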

\begin{proof}
Letting $\mathcal{C}(k,k+1, x_0):=B_{2^{k+1}}(x_0)\backslash B_{2^{k}}(x_0),$ we  rewrite
\begin{align*}
(\varphi * f)(x_0)=&\int_{\RR^d}\varphi(x_0-y)f(y)\dd y\\
=&\int_{B_1(x_0)}\varphi(x_0-y)f(y)\dd y+\sum_{k\geq0}\int_{\mathcal{C}(k,k+1, x_0)}\varphi(x_0-y)f(y)\dd y.
\end{align*}
Since  $\varphi\in\mathcal{D}(\RR^d)$, we have by the H\"older inequality that
\begin{align*}
|(\varphi * f)(x_0)|\leq&C\|f\|_{1,1}+\sup_{x\in\RR^d}\big||x|^{2d}\varphi(x)\big|\sum_{k\geq0}\int_{\mathcal{C}(k,k+1, x_0)}|x_0-y|^{-{2d}}|f|(y)\dd y \\
\leq&C\|f\|_{1,1}+C\sum_{k\geq0}2^{-2dk}\int_{\mathcal{C}(k,k+1,x_0)} |f|(y)\dd y \\
\leq&C\|f\|_{1,1}+C\|f\|_{1,1}\sum_{k\geq0}2^{-2dk} 2^{kd},
\end{align*}
taking $\sup$ in terms of $x_0\in\RR^d$, which implies the desired estimate \eqref{eq-local-1}.

Next, we can show the second estimate by the scaling analysis. Let $f_{j}(x):=f\left(x / 2^{j}\right)$, we see that
$$
\big|\dot{\Delta}_{j} f(x)\big|=\left|\int_{\mathbb{R}^{d}} \varphi(y) f_{j}\left(2^{j} x-y\right) \mathrm{d} y\right|=\left|\dot{\Delta}_{0} f_{j}\left(2^{j} x\right)\right|.
$$
Moreover, we get by \eqref{eq-local-1} that for each $p\geq1,$
$$
\big\|\dot{\Delta}_{0} f_{j}\big\|_{L^{\infty}\left(\mathbb{R}^{d}\right)} \leq C\left\|f_{j}\right\|_{1,1}=C\left\|f\left(\cdot / 2^{j}\right)\right\|_{1,1} \leq C\left\|f\left(\cdot / 2^{j}\right)\right\|_{p, 1},
$$
namely,
$$
\big\|\dot{\Delta}_{j} f\big\|_{L^{\infty}\left(\mathbb{R}^{d}\right)} \leq C\left\|f\left(\cdot / 2^{j}\right)\right\|_{p, 1}  \leq C 2^{d j / p}\|f\|_{p, 2^{-j}}.
$$
where we have used the following fact
\[\left(\int_{|x-y| \leq 1}\left|f\left(\frac{y}{2^{j}}\right)\right|^{p} \mathrm{~d} y\right)^{\frac1p}=\left(2^{d j} \int_{\left|2^{-j} x-y\right| \leq 2^{-j}}|f(y)|^{p} \mathrm{~d} y\right)^{\frac1p} \leq C 2^{d j / p}\|f\|_{p, 2^{-j}}.\]
In the same way, we can show estimate
\[\big\|\dot{S}_{j} f\big\|_{L^{\infty}\left(\mathbb{R}^{d}\right)}   \leq C 2^{d j / p}\|f\|_{p, 2^{-j}},\] and we complete the proof of the lemma.
\end{proof}
With Lemma \ref{GYong} in hand, we begin to show Proposition \ref{prop-bound} which is the key estimate in our paper.
\begin{proof}[Proof of Proposition \ref{prop-bound}]
From system \eqref{che-M}, we can write in term of semigroup that
\begin{align*}
\nabla c(t) =&e^{\frac{t}{\tau}(-1+\Delta)}\nabla\big(\psi(x/M) c_0\big)+\int_{0}^te^{\frac{(t-s)}{\tau}(-1+\Delta)} \nabla n \dd s.
\end{align*}
According to low-high frequency decomposition, we split $\nabla c$ into two parts as follow
\[\nabla c=\dot{S}_0 \nabla c+\sum_{j\geq 0}\dot{\Delta}_j \nabla c:=\nabla c^L+\nabla c^H.\]
For low frequency regime,   by the generalized Young inequality \eqref{GY-lmf}, we have
\[\|\nabla c^L\|_{L^\infty(\RR^d)}\leq C\|\nabla c_0\|_{L^2_{\uloc}(\RR^d)}.\]
As for high frequency regime, we can bound it by using \cite[Lemma 2.4]{BCD} and \eqref{GY-lmf} that for $k>d,$
\begin{align*}
\left\| \nabla c^H\right\|_{L^\infty(\RR^d)}\leq& C\|c_0\|_{W^{1,\infty}(\RR^d)}+\sum_{j\geq 0}\int_{0}^t\left\|\dot{\Delta}_je^{\frac{(t-s)}{\tau}(-1+\Delta)} \nabla n \right\|_{L^\infty(\RR^d)}\dd s\\
\leq &C\|c_0\|_{W^{1,\infty}(\RR^d)}+C\sum_{j\geq 0}C2^{j(1+\frac{d}{k})}\int_{0}^te^{-c(t-s)2^{2j}}\left\|  n(s) \right\|_{L^{k}_{\uloc}(\RR^d)}\dd s\\
\leq &C\|c_0\|_{W^{1,\infty}(\RR^d)}+C\|  n  \|_{L^{\infty}_TL^{k}_{\uloc}(\RR^d)}\sum_{j\geq 0}C2^{j(-1+\frac{d}{k})} \\
\leq &C\|c_0\|_{W^{1,\infty}(\RR^d)}+C\|  n  \|_{L^{\infty}_TL^{k}_{\uloc}(\RR^d)}.
\end{align*}
Combining the above both estimates yields
\begin{align*}
\|\nabla c\|_{L^\infty(\RR^d)}\leq C\|\nabla c_0\|_{L^2_{\uloc}(\RR^d)}+C\|c_0\|_{W^{1,\infty}(\RR^d)} +C\|  n  \|_{L^{\infty}_TL^{k}_{\uloc}(\RR^d)}.
\end{align*}
Similarly, we can show that $k>d,$
\begin{align*}
\|n(t)\|_{L^\infty(\RR^d)}\leq &C\|n\|_{L^1_{\uloc}(\RR^d)}+C\|c_0\|_{L^{\infty}(\RR^d)}+\sum_{j\geq 0}\int_{0}^t\left\|\dot{\Delta}_je^{(t-s)\Delta} \nabla (n \nabla c)\right\|_{L^\infty(\RR^d)}\dd s\\
&+\mu\sum_{j\geq 0}\int_{0}^t\left\|\dot{\Delta}_je^{(t-s)\Delta}   n^2 \right\|_{L^\infty(\RR^d)}\dd s\\
\leq &C\|n\|_{L^1_{\uloc}(\RR^d)}+C\|c_0\|_{L^{\infty}(\RR^d)}+C\mu\sum_{j\geq 0}2^{j  \frac{2d}{k} }\int_{0}^t e^{-c(t-s)2^{2j}}\|n\|^2_{L^{k}_{\uloc}(\RR^d)}\dd s\\
&+C\sum_{j\geq 0}2^{j(1+\frac{d}{k})}\int_{0}^te^{-c(t-s)2^{2j}}\|n\|_{L^k_{\uloc}(\RR^d)}\|\nabla c\|_{L^{\infty}(\RR^d)}\dd s\\
\leq &C\left(\|n\|_{L^1_{\uloc}(\RR^d)}+\|c_0\|_{L^{\infty}(\RR^d)}+\mu \|n\|^2_{L^\infty_TL^k_{\uloc}(\RR^d)}
 +\|\nabla c\|^2_{L^\infty_TL^{\infty}(\RR^d)}\right).
\end{align*}
It remains  to show the uniformly local $L^2_{\uloc}(\RR^d)$-estimate for $c$. In the same fashion as above for $\nabla c$, one has
\begin{align*}
\frac{\tau}{2}\frac{\textnormal{d}}{\textnormal{d}t}\int_{\RR^d}\left(  \phi^R_{x_0}c\right)^2 \dd x+ \int_{\RR^d} \left(\phi^R_{x_0}c\right)^2 \dd x-\int_{\RR^d}\Delta cc \left(\phi^R_{x_0}\right)^2 \dd x= \int_{\RR^d}nc \left(\phi^R_{x_0}\right)^2 \dd x.
\end{align*}
Integrating by parts
\begin{align*}
-\int_{\RR^d} \Delta cc\left(\phi^R_{x_0} \right)^2\dd x=\int_{\RR^d} \left(\phi^R_{x_0}\nabla c\right)^2 \dd x+2\int_{\RR^d}  \phi^R_{x_0}c\nabla c \cdot\nabla\phi^R_{x_0} \dd x.
\end{align*}
By the Cauchy-Schwarz inequality, we see that
\begin{equation*}
2\int_{\RR^d}  \phi^R_{x_0}c\nabla c \cdot\nabla\phi^R_{x_0} \dd x\leq
 2C\left\|\phi^R_{x_0}c\right\|_{L^2(\RR^d)} \left\|\nabla c\right\|_{L^2_{\uloc}(\RR^d)}
\end{equation*}
On the other hand, we get by the H\"older inequality that
\begin{align*}
\int_{\RR^d}nc \left(\phi^R_{x_0}\right)^2 \dd x\leq& \left\|\phi^R_{x_0} \right\|_{L^2(\RR^d)}\|n\|_{L^\infty(\RR^d)}\left\|\phi^R_{x_0}c\right\|_{L^2(\RR^d)}\\
\leq&CR^{\frac{d}{2}} \|n\|_{L^\infty(\RR^d)}\left\|\phi^R_{x_0}c\right\|_{L^2(\RR^d)}.
\end{align*}
Collecting the above estimates yields
\begin{align*}
 \tau \frac{\textnormal{d}}{\textnormal{d}t}\left\|\phi^R_{x_0}c\right\|_{L^2(\RR^d)}+ \left\|\phi^R_{x_0}c\right\|_{L^2(\RR^d)}\leq C\left\|\nabla c\right\|_{L^2_{\uloc}(\RR^d)}  +CR^{\frac{d}{2}} \|n\|_{L^\infty(\RR^d)}.
\end{align*}
Integrating the above inequality with respect to time $t$, we readily obtain
\begin{align*}
\left\|\phi^R_{x_0}c(t)\right\|_{L^2(\RR^d)}\leq& e^{-\frac{t}{\tau}} \left\|\phi^R_{x_0}\psi(x/M)c_0\right\|_{L^2(\RR^d)}\\&+\left(C\left\|\nabla c\right\|_{L^\infty_T L^2_{\uloc}(\RR^d)}  +CR^{\frac{d}{2}} \|n\|_{L^\infty_TL^\infty(\RR^d)}\right)\int_0^t \frac{1}{\tau}e^{-\frac{1}{\tau}(t-s)}\dd s.
\end{align*}
Since \[\left\|\nabla c\right\|_{L^\infty_T L^2_{\uloc}(\RR^d)}  +  \|n\|_{L^\infty_TL^\infty(\RR^d)}\leq C(\lambda, \tau, \mu, \chi, d, R), \]
we immediately have
\[\|c\|_{L_T^\infty L^2_{\uloc}(\RR^d)} \leq\|c_0\|_{L^2_{\uloc}(\RR^d)}  +C(\lambda, \tau, \mu, \chi, d, R).\]
By the sharp interpolation inequality, we have
\begin{equation*}
\| c\|_{L^\infty(\RR^d)}\leq C\|c\|^{\frac{2}{d+2}}_{L^2_{\uloc}(\RR^d)}\|\nabla c\|^{\frac{d}{d+2}}_{L^\infty(\RR^d)}.
\end{equation*}
Collecting the above estimate, we eventually get that for $0<t<T_{\max},$
\[\|n(t)\|_{L^\infty(\RR^d)}+\|  c(t)\|_{W^{1,\infty}(\RR^d)}\le C\left(\tau,\chi, k, \lambda, d,  \mu,\|n_0\|_{L^\infty(\RR^d)},\|c_0\|_{W^{1,\infty}(\RR^d)}\right), \]
and then we finish proof of the proposition.
\end{proof}
In terms of Proposition \ref{prop-bound} and continuation criterion \eqref{blowup} in Theorem \ref{thm-local},  we know that $T_{\max}=\infty,$ that is, the couple $(n^M,C^M)$ is global-in-time solution to problem \eqref{che-M}-\eqref{che-I-M} satisfying
\[\left\|n^M(t)\right\|_{L^\infty(\RR^d)}+\left\|  c^M(t)\right\|_{W^{1,\infty}(\RR^d)}\le C\left(\tau,\chi, k, \lambda, d,  \mu,\|n_0\|_{L^\infty(\RR^d)},\|c_0\|_{W^{1,\infty}(\RR^d)}\right) \] for all $t\geq0.$ Furthermore,  we can conclude by the compactness argument argument as used in \cite{ZZ} that  there exists a couple $(n,u)$ which the limit of family $(n^M,C^M)$ such that \[\left\|n(t)\right\|_{L^\infty(\RR^d)}+\left\|  c(t)\right\|_{W^{1,\infty}(\RR^d)}\le C\left(\tau,\chi, k, \lambda, d,  \mu,\|n_0\|_{L^\infty(\RR^d)},\|c_0\|_{W^{1,\infty}(\RR^d)}\right) \] for all $t\geq0,$ and it is global-in-time solution to problem \eqref{che}-\eqref{che-I}.

Our task is now to show the uniqueness of solution $(n,c)$ to problem \eqref{che}-\eqref{che-I}. Indeed, we can get it in uniformly local $L^2_{\uloc}(\RR^d)$ framework by   modifying the proof of uniqueness in Theorem \ref{thm-local}.  Thus we complete the proof of Theorem \ref{result}.
%%%%%%%%%%%%%%%%%%%%%%%%%%%%%%%%%%%%%%%%%%%%%%%%%%%%%%%%%%%%%%%%%%%%%%%%%%%%%%%%%%%%%%%%%%%%%%%%%%%%%%%%%%%%%%%%%%%%%%%%%%%%%%%%%%%%%%%%%%%%%%%%%%%%%%%%%%%%%%%%%%%%%%%%%%%%
\section*{Acknowledgement}
%%%%%%%%%%%%%%%%%%%%%%%%%%%%%%%%%%%%%%%%%%%%%%%%%%%%%%%%%%%%%%%%%%%%%%%%%%%%%%%%%%%%%%%%%%%%%%%%%%%%%%%%%%%%%%%%%%%%%%%%%%%%%%%%%%%%%%%%%%%%%%%%%%%%%%%%%%%%%%%%%%%%%%%%%%%%
Yao Nie and X. Zheng was partially supported by the National Natural Science Foundation of China under grant No. 11871087.
%%%%%%%%%%%%%%%%%%%%%%%%%%%%%%%%%%%%%%%%%%%%%%%%%%%%%%%%%%%%%%%%%%%%%%%%%%%%%%%%%%%%%%%%%%%%%%%%%%%%%%%%

\end{document}